\documentclass[a4paper,reqno]{amsart}

\textheight 220mm
\textwidth 150mm
\hoffset -16mm
\usepackage{amssymb}
\usepackage{amstext}
\usepackage{amsmath}
\usepackage{amscd}
\usepackage{amsthm}
\usepackage{amsfonts}
\usepackage{enumerate}
\usepackage{graphicx}
\usepackage{latexsym}
\usepackage{mathrsfs}
\usepackage{mathtools}
\usepackage[all]{xy}
\xyoption{all}

\usepackage{pstricks}
\usepackage{lscape}
\usepackage{comment}

\newtheorem{theorem}{Theorem}[section]

\newtheorem{corollary}[theorem]{Corollary}
\newtheorem{lemma}[theorem]{Lemma}
\newtheorem{proposition}[theorem]{Proposition}
\newtheorem{definition-proposition}[theorem]{Definition-Proposition}
\newtheorem{problem}[theorem]{Problem}

\theoremstyle{definition}
\newtheorem{definition}[theorem]{Definition}

\newcommand{\pp}{{\mathfrak{p}}}

\newcommand{\DD}{\mathcal{D}}
\newcommand{\DDD}{\mathsf{D}}
\newcommand{\KKK}{\mathsf{K}}

\newcommand{\TT}{\mathcal{T}}
\newcommand{\UU}{\mathcal{U}}
\newcommand{\VV}{\mathcal{V}}
\newcommand{\XX}{\mathcal{X}}
\newcommand{\YY}{\mathcal{Y}}
\newcommand{\ZZ}{\mathcal{Z}}

\newcommand{\Z}{\mathbb{Z}}
\renewcommand{\P}{\mathbb{P}}

\newcommand{\X}{\mathbb{X}}
\renewcommand{\S}{\mathbb{S}}
\newcommand{\bo}{\operatorname{b}\nolimits}
\newcommand{\sg}{\operatorname{sg}\nolimits}

\newcommand{\projdim}{\mathop{{\rm proj.dim}}\nolimits}
\newcommand{\injdim}{\mathop{{\rm inj.dim}}\nolimits}

\newcommand{\Ext}{\operatorname{Ext}\nolimits}

\newcommand{\Hom}{\operatorname{Hom}\nolimits}
\newcommand{\End}{\operatorname{End}\nolimits}

\newcommand{\gl}{\mathop{{\rm gl.dim\,}}\nolimits}
\newcommand{\op}{\operatorname{op}\nolimits}
\newcommand{\RHom}{\mathbf{R}\strut\kern-.2em\operatorname{Hom}\nolimits}
\newcommand{\Lotimes}{\mathop{\stackrel{\mathbf{L}}{\otimes}}\nolimits}

\newcommand{\Spec}{\operatorname{Spec}\nolimits}
\newcommand{\Supp}{\operatorname{Supp}\nolimits}

\DeclareMathOperator{\moduleCategory}{\mathsf{mod}} \renewcommand{\mod}{\moduleCategory}
\DeclareMathOperator{\Mod}{\mathsf{Mod}}
\DeclareMathOperator{\proj}{\mathsf{proj}}

\DeclareMathOperator{\inj}{\mathsf{inj}}

\DeclareMathOperator{\thick}{\mathsf{thick}}

\DeclareMathOperator{\coh}{\mathsf{coh}}
\DeclareMathOperator{\per}{\mathsf{per}}

\DeclareMathOperator{\qgr}{\mathsf{qgr}}
\DeclareMathOperator{\CM}{\mathsf{CM}}

\DeclareMathOperator{\add}{\mathsf{add}}

\newcommand{\cut}{\ar@{-}@[|(5)]}



\numberwithin{equation}{section}

\begin{document}
\title[Tilting theory for Gorenstein rings in dimension one]{Tilting theory for Gorenstein rings in dimension one}

\author[Buchweitz]{Ragnar-Olaf Buchweitz$^\dagger$}
\address{R. Buchweitz: Department of Computer and Mathematical Sciences, University of Toronto Scarborough, Toronto, Ontario, Canada M1C 1A4}

\author[Iyama]{Osamu Iyama}
\address{O. Iyama: Graduate School of Mathematics, Nagoya University, Furocho, Chikusaku, Nagoya 464-8602, Japan}
\email{iyama@math.nagoya-u.ac.jp}
\urladdr{http://www.math.nagoya-u.ac.jp/~iyama/}

\author[Yamaura]{Kota Yamaura}
\address{K. Yamaura: Graduate Faculty of Interdisciplinary Research, Faculty of Engineering, University of Yamanashi, Takeda, Kofu 400-8510, Japan}
\email{kyamaura@yamanashi.ac.jp}

\thanks{${}^\dagger$The first author passed away on November 11th, 2017.}
\thanks{The second author was partially supported by JSPS Grant-in-Aid for Scientific Research (B) 24340004, (B) 16H03923, (C) 23540045 and (S) 15H05738. The third author was partially supported by Grant-in-Aid for JSPS Research Fellow 13J01461 and JSPS Grant-in-Aid for Young Scientists (B) 26800007.}
\thanks{2010 {\em Mathematics Subject Classification.} 16G50, 13C14,
16E35, 18E30, 14F05}
\thanks{{\em Key words and phrases.} Cohen-Macaulay module, Gorenstein ring, derived category, singularity category, triangulated category, Frobenius category, differential graded algebra, tilting theory}

\begin{abstract}
In representation theory, commutative algebra and algebraic geometry, it is an important problem to understand when the triangulated category $\DDD_{\sg}^{\Z}(R)=\underline{\CM}_0^{\Z}R$ admits a tilting (respectively, silting) object for a $\Z$-graded commutative Gorenstein ring $R=\bigoplus_{i\ge0}R_i$. Here $\DDD_{\sg}^{\Z}(R)$ is the singularity category, and $\underline{\CM}_0^{\Z}R$ is the stable category of $\Z$-graded Cohen-Macaulay $R$-modules which are locally free at all non-maximal prime ideals of $R$.

In this paper, we give a complete answer to this problem in the case $\dim R=1$ and $R_0$ is a field: We prove that $\underline{\CM}_0^{\Z}R$ always admits a silting object,
and that $\underline{\CM}_0^{\Z}R$ admits a tilting object if and only if either $R$ is regular or the $a$-invariant of $R$ is non-negative. Our silting/tilting object will be given explicitly.
We also show that, if $R$ is reduced and non-regular, then its $a$-invariant is non-negative and the above tilting object gives a full strong exceptional collection in $\underline{\CM}_0^{\Z}R=\underline{\CM}^{\Z}R$.
\end{abstract}
\maketitle

\section{Introduction}

\subsection{Background}

The study of maximal Cohen-Macaulay (CM) modules is one of the central subjects in commutative algebra and representation theory \cite{Au,CR,LW,Si,Y}.
When the ring $R$ is Gorenstein, the category
\[\CM R=\{X\in\mod R\mid\Ext^i_R(X,R)=0\ \mbox{ for all $i\ge1$}\}\]
of CM $R$-modules forms a Frobenius category and therefore its stable category $\underline{\CM} R$ has a natural structure of a triangulated category \cite{Ha1}.
The Verdier quotient $\DDD_{\sg}(R)=\DDD^{\bo}(\mod R)/\KKK^{\bo}(\proj R)$ introduced by Buchweitz \cite{Buc} and Orlov \cite{O1}, is canonically triangle equivalent to $\underline{\CM} R$, and hence is enhanced by the Frobenius category $\CM R$.
When $R$ is a hypersurface, it is also triangle equivalent to the stable category of matrix factorizations \cite{E}. It has increasing importance in algebraic geometry and physics.

Tilting theory controls triangle equivalences between derived categories of rings, and plays a significant role in various areas of mathematics (see e.g.\ \cite{AHK}).
Tilting theory also gives a powerful tool to study the stable categories of Gorenstein rings.
For example, for a finite dimensional algebra $\Lambda$ of finite global dimension, there is a triangle equivalence 
\begin{equation}\label{happel}
\underline{\mod}^{\Z}T(\Lambda)\simeq\KKK^{\bo}(\proj\Lambda)
\end{equation}
for the stable category $\underline{\mod}^{\Z}T(\Lambda)$ of the $\Z$-graded modules over the trivial extension algebra $T(\Lambda)$ \cite{Ha1}.
This is an important result which gives a large family of representation-finite self-injective algebras (see e.g.\ \cite{Sk}).
The second classical example is a triangle equivalence
\begin{equation*}\label{bbgg}
\underline{\mod}^{\Z}\bigwedge(k^n)\simeq\KKK^{\bo}(\proj\Lambda)
\end{equation*}
for the exterior algebra $\bigwedge(k^n)$ and the Beilinson algebra $\Lambda$ \cite{Be,BGG}.
The third classical example is a triangle equivalence 
\begin{equation}\label{simple dynkin}
\underline{\CM}^{\Z}R\simeq\KKK^{\bo}(\mod kQ)
\end{equation}
for the stable category of $\Z$-graded Cohen-Macaulay modules over a $\Z$-graded simple surface singularity $R$ and the path algebra $kQ$ of the Dynkin quiver $Q$ of the same type \cite{GL1,GL2,KST1}.
Each of the above triangle equivalences follows from the fact that the stable category has a \emph{tilting object}  (see Definition \ref{define tilting}).
In fact, under mild assumptions, a triangulated category admits a tilting object if and only if it is triangle equivalent to $\KKK^{\bo}(\proj\Lambda)$ for some ring $\Lambda$ (Proposition \ref{tilting functor}).
Recently, the class of silting objects was introduced to complete the class of tilting objects in the study of t-structures \cite{KV} and mutation \cite{AI}.
We will see that they also plays an important role in the study of the stable categories of Gorenstein rings.

It is well known in Cohen-Macaulay representation theory that the subcategory
\[\CM_0R=\{X\in\CM R\mid X_\pp\in\proj R_\pp\ \mbox{for all $\pp\in\Spec R$ with $\dim R_\pp<\dim R$}\}\]
behaves much nicer than $\CM R$ since it enjoys Auslander-Reiten-Serre duality, and hence it has almost split sequences if $R$ is complete local \cite{Au,Y} (cf.\ Proposition \ref{AR duality for CM}).
Therefore, for a $\Z$-graded Gorenstein ring $R$, we consider the Frobenius category
\begin{eqnarray}\label{CM0}
\CM_0^{\Z}R:=\{X\in\mod^{\Z}R\mid X\in\CM_0R\ \mbox{as an ungraded $R$-module}\}.
\end{eqnarray}
There are a number of $\Z$-graded Gorenstein rings $R$ such that the stable categories $\underline{\CM}_0^{\Z}R$ admit tilting objects, see e.g.\ \cite{AIR,DL1,DL2,FU,G1,G2,HIMO,HU,IO,IT,JKS,KST1,KST2,Ki1,Ki2,KLM,LP,LZ,MU,SV,U1,U2,Ya} and a survey article \cite{I}.
The following problem is important in representation theory, commutative algebra and algebraic geometry.

\begin{problem}\label{existence question}
Let $R=\bigoplus_{i\ge0}R_i$ be a $\Z$-graded Gorenstein ring such that $R_0$ is a field.
When does the stable category $\underline{\CM}^{\Z}_0R$ of $\Z$-graded Cohen-Macaulay $R$-modules have a tilting object?
\end{problem}

When $\dim R=0$, $\underline{\CM}^{\Z}_0R=\underline{\mod}^{\Z}R$ always has a tilting object. In fact, the third author gave a much more general result \cite{Ya} which also implies the triangle equivalence \eqref{happel} as a special case.

The aim of this paper is to give a complete answer to Problem \ref{existence question} when $\dim R=1$. Surprisingly to us, it is determined by the $a$-invariant of $R$. Our results are summarized as follows.

\begin{theorem}[Theorems \ref{general tilting} and \ref{negative case}]\label{summary}
Let $R=\bigoplus_{i\ge0}R_i$ be a Gorenstein ring in dimension one such that $R_0$ is a field.
Then $\underline{\CM}_0^{\Z}R$ always has a silting object.
Moreover, $\underline{\CM}_0^{\Z}R$ has a tilting object if and only if either $R$ is regular or the $a$-invariant of $R$ is non-negative.
\end{theorem}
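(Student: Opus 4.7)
The plan is to split the claim into three parts and treat them separately: existence of silting, existence of tilting under the positive conditions, and non-existence of tilting otherwise. The regular case is immediate: a one-dimensional graded regular $R$ with $R_0$ a field must be a polynomial ring in one variable over $R_0$, hence $\CM R = \proj R$ and $\underline{\CM}_0^{\Z} R = 0$, so $0$ is trivially tilting. Henceforth I assume $R$ is non-regular, so that $\underline{\CM}_0^{\Z} R$ is non-zero and comes equipped with Auslander-Reiten-Serre duality by Proposition~\ref{AR duality for CM}.

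For the silting assertion, and for the tilting assertion when $a(R) \geq 0$, I would construct an explicit candidate of the form $T = \bigoplus_{j \in J} M(j) \in \CM_0^{\Z} R$, where $M$ is a carefully chosen graded module whose additive closure, forgetting the grading, thick-generates $\underline{\CM}_0 R$ inside $\underline{\CM} R$, and $J \subset \Z$ is a finite interval whose length is dictated by the generating degrees of $R$ and by $a(R)$. The crucial input is that for $X, Y \in \CM_0^{\Z} R$ with bounded generating degrees, the internal-degree support of each $\underline{\Ext}^i_R(X, Y)$ is bounded — a graded local-duality estimate using the Gorenstein property together with local freeness on the punctured spectrum. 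Combined with the AR-Serre duality $D\,\underline{\Hom}_R(X, Y) \cong \underline{\Hom}_R(Y, X(a)[1])$, this lets me choose $J$ so that all positive stable self-extensions of $T$ vanish, producing a silting object. When $a(R) \geq 0$, the same duality, run in the opposite direction, also annihilates the negative self-extensions, and the same $T$ is tilting.

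The main obstacle is the converse: when $R$ is non-regular and $a(R) < 0$, no tilting object exists. I would argue by contradiction. Suppose $T$ is tilting. By Proposition~\ref{tilting functor} there is a triangle equivalence $\underline{\CM}_0^{\Z} R \simeq \KKK^{\bo}(\proj \Lambda)$ with $\Lambda = \End(T)$ a finite-dimensional $R_0$-algebra of finite global dimension. On the right the Serre functor is the derived Nakayama functor $-\Lotimes_\Lambda D\Lambda$, while on the left it is $(a)[1]$. Transporting the grading-shift autoequivalence $(1)$ across the equivalence produces a triangle autoequivalence $\Phi$ of $\KKK^{\bo}(\proj \Lambda)$ satisfying $\Phi^{a}[1] \simeq \nu_\Lambda$. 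For $a < 0$, this relation — combined with non-regularity, which guarantees $K_0(\Lambda)$ is non-trivial — forces a numerical constraint on the Cartan/Coxeter spectrum of $\Lambda$ that no finite-dimensional algebra of finite global dimension can meet. The delicate step is pinning down the precise obstruction; most plausibly one exhibits a specific indecomposable graded MCM module whose iterated images under $\nu = (a)[1]$ escape every bounded degree window, contradicting its existence as a bounded complex of finitely generated projective $\Lambda$-modules. This is the main technical hurdle, and I expect it to be delivered either by a fractional Calabi-Yau dimension argument or by tracking explicit Loewy data through the grading-shift autoequivalence.
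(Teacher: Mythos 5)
Your argument for the non-existence of a tilting object when $R$ is non-regular with $a<0$ is, in outline, the same as the paper's: track iterated images under the Serre/Nakayama functor and use $a<0$ to push the graded degrees out of any bounded window, eventually forcing $\nu^\ell(\Lambda)=0$ and contradicting that $\nu$ is an autoequivalence (this is exactly what the paper does, using $\nu^\ell(\Lambda)$ corresponding to $T(\ell a)$). One small but concrete slip: for $d=1$ the Serre functor on $\underline{\CM}_0^{\Z}R$ is $(a)$ with \emph{no} shift; Proposition \ref{AR duality for CM} gives $D\underline{\Hom}^{\Z}_R(X,Y)\cong\underline{\Hom}^{\Z}_R(Y,X(a)[d-1])$, and $d-1=0$ here, so your $\Phi^a[1]\simeq\nu_\Lambda$ should read $\Phi^a\simeq\nu_\Lambda$. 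This doesn't hurt the idea — the degrees still run off to $-\infty$ — but it would change the indexing and the homological-degree bookkeeping if you pushed the argument through.

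The genuine gap is in the existence part. Your plan to build $T=\bigoplus_{j\in J}M(j)$ from a local-duality estimate bounding the degree support of $\underline{\Ext}^i$ addresses the vanishing $\Hom_{\TT}(T,T[i])=0$, but it does not address why $T$ thick-generates $\underline{\CM}_0^{\Z}R$, which is the other half of the definition of tilting/silting and is not at all automatic. It also does not explain what happens when $a<0$: there you assert silting rather than tilting, but the proposed construction (killing both positive and negative stable extensions via duality) is symmetric in a way that either gives tilting or gives neither, so nothing in your sketch distinguishes silting from tilting. The paper handles this structurally: it identifies $\underline{\CM}_0^{\Z}R$ with a Verdier quotient $\DD_R/\KKK^{\bo}(\proj^{\Z}R)$ (Proposition \ref{subcategory of Dsg}), realizes that quotient as a thick subcategory by Orlov-type semi-orthogonal decompositions (Theorem \ref{embedding2}), and then for $a\ge0$ glues a tilting object $U$ of the piece $\per(\qgr R)$ (a progenerator $\bigoplus K(i)_{\ge0}$) with a tilting object of the finite piece $\DDD^{\bo}(\mod^{[-a,-1]}R)$, while for $a<0$ the category is instead a Verdier quotient of $\per(\qgr R)$ by a summand $\thick P$ of $U$, and the silting object is obtained by silting reduction (Proposition \ref{silting reduction}) — this is precisely what produces a silting object that fails to be tilting. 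Without some substitute for the thick-generation step and for the asymmetry between $a\ge 0$ and $a<0$, your existence argument does not go through.
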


In particular, the Grothendieck group $K_0(\underline{\CM}_0^{\Z}R)$ is a free abelian group of finite rank (Corollary \ref{grothendieck}).
To prove Theorem \ref{summary}, we interpret $\underline{\CM}_0^{\Z}R$ as a thick subcategory of the singularity category $\DDD_{\sg}^{\Z}(R)=\DDD^{\bo}(\mod^{\Z}R)/\KKK^{\bo}(\proj^{\Z}R)$ (Proposition \ref{subcategory of Dsg})
and give analogues of Orlov's semi-orthogonal decompositions \cite{O} of $\DDD^{\bo}(\mod^{\Z}R)$ (Theorem \ref{embedding2}).

\subsection{Our results}
Throughout this subsection, we assume the following.
\begin{itemize}
\item[(R1)] $R$ is a $\Z$-graded commutative Gorenstein ring of Krull dimension one. 
\item[(R2)] $R=\bigoplus_{i\ge0}R_i$ and $k:=R_0$ is a field.
\end{itemize}
Let $S$ be the set of all homogeneous non-zero-divisors in $R$,
and $K:=RS^{-1}$ the $\Z$-graded total quotient ring of $R$.
There exists then an integer $p>0$ such that $K(p)\simeq K$ as a graded $R$-module (Lemma \ref{basic properties of K}(b)).
Moreover, $\dim R=1$ implies that $K= R[r^{-1}]$ holds for each homogeneous non-zero-divisor $r$ of positive degree (Lemma \ref{basic properties of K1}).

Let $\mod^{\Z}R$ be the category of $\Z$-graded finitely generated $R$-modules, by $\mod_0^{\Z}R$ the category of $\Z$-graded $R$-modules of finite length, and by $\proj^{\Z}R$ the category of $\Z$-graded finitely generated projective $R$-modules.
For $X\in\mod^{\Z}R$ and $n\in\Z$, let
\[X_{\ge n}=X_{>n-1}:=\bigoplus_{i\ge n}X_i.\]
Let $\qgr R=\mod^{\Z}R/\mod_0^{\Z}R$ be the quotient category. This is equivalent to the category of coherent sheaves on the quotient stack $[(\Spec R\setminus\{R_{>0}\})/k^*]$ \cite[Proposition 2.17]{O}.
Let $\DDD^{\bo}(\qgr R)$ be the bounded derived category of $\qgr R$, and 
let $\per(\qgr R)$ be its thick subcategory generated by $\proj^{\Z}R$.
Our starting point is the following result on the geometric side, where we refer to \cite{Hu} for the notion of exceptional collections.

\begin{theorem}\label{general tilting for qgr}
Under the setting (R1) and (R2), the following holds true.
\begin{enumerate}[\rm(a)]
\item $\qgr R$ has a progenerator $U:=\bigoplus_{i=1}^{p}K(i)_{\ge0}= \bigoplus_{i=1}^{p}K_{\ge i}(i)$, and $\per(\qgr R)$ has a tilting object $U$.
\item We have an equivalence $\qgr R\simeq\mod\Lambda$ and a triangle equivalence $\per(\qgr R)\simeq\KKK^{\bo}(\proj \Lambda)$ for $\Lambda:=
\End_{\qgr R}(U)$.
\item We have
{\small\begin{equation}\label{lambda}
\Lambda\simeq\End^{\Z}_R(U)=\begin{bmatrix}
K_0&K_{-1}&\cdots&K_{2-p}&K_{1-p}\\
K_{1}&K_0&\cdots&K_{3-p}&K_{2-p}\\
\vdots&\vdots&\ddots&\vdots&\vdots\\
K_{p-2}&K_{p-3}&\cdots&K_{0}&K_{-1}\\
K_{p-1}&K_{p-2}&\cdots&K_{1}&K_0
\end{bmatrix}.\end{equation}}
\item $\Lambda$ is a finite dimensional self-injective $k$-algebra.
\item If $R$ is reduced, then $\Lambda$ is a semisimple $k$-algebra. Otherwise $\Lambda$ has infinite global dimension.
\item If $R$ is reduced, then any ordering in the isomorphism classes of indecomposable direct summands of $U$ gives a full strong exceptional collection in $\per(\qgr R)$. Otherwise, $\per(\qgr R)$ does not have a full strong exceptional collection.
\end{enumerate}
\end{theorem}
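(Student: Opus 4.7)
The strategy is to reduce computations in $\qgr R$ to graded modules over the graded total quotient ring $K$. Since $\dim R=1$, the only non-minimal graded prime is $R_{>0}$, so $\Spec R\setminus\{R_{>0}\}=\Spec K$ as graded schemes. My first step is to show that localization at $S$ induces an equivalence $\qgr R\xrightarrow{\sim}\gr K$ of abelian categories. The functor $M\mapsto M\otimes_R K=MS^{-1}$ kills finite-length modules (each element is annihilated by some positive-degree non-zero-divisor sitting in $R_{>0}$), so it factors through $\qgr R$. A quasi-inverse sends $N\in\gr K$ to a truncation $N_{\ge N_0}$, which is finitely generated over $R$; here one-dimensionality enters through the finite length of $R/(r)$ for a positive-degree non-zero-divisor $r$ of degree $d$, ensuring each $K_i$ is finite dimensional and that $r^{-1}K_{i+d}=K_i$ eventually.

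Granting this equivalence, parts (a)--(c) are essentially formal. The ring $K$ is graded Artinian and $K(p)\simeq K$, so the graded shifts $K(i+p)\simeq K(i)$ collapse modulo $p$, and $\bigoplus_{i=1}^{p}K(i)$ is a projective generator of $\gr K$. Its endomorphism ring equals $\bigoplus_{i,j=1}^p\Hom_{\gr K}(K(j),K(i))=\bigoplus_{i,j=1}^p K_{i-j}$, the matrix ring in \eqref{lambda}. Under the equivalence, the preimage of $\bigoplus_{i=1}^p K(i)$ is $U=\bigoplus_{i=1}^p K_{\ge i}(i)$, since the truncations differ from $K(i)$ only by finite-length modules; this yields the progenerator claim in (a), the equivalence (b), and the formula (c). The tilting statement in $\per(\qgr R)$ is then automatic from the Morita equivalence together with Proposition \ref{tilting functor}.

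Parts (d)--(f) flow from ring-theoretic properties of $K$ and $\Lambda$. For (d), localizing the dimension-one Gorenstein $R$ at its non-zero-divisors yields the Artinian Gorenstein $K$, which is graded self-injective; hence $\gr K$ is Frobenius, the progenerator $\bigoplus K(i)$ is also injective, and $\Lambda$ is self-injective. For (e), $R$ is reduced iff each $R_\pp$ (with $\pp$ minimal) is a field, iff $K$ is a finite product of graded fields; in that case $\gr K\simeq\mod K_0$ with $K_0$ semisimple, so $\Lambda$ is Morita equivalent to the semisimple $k$-algebra $K_0$. If $R$ is not reduced then $K$ and hence $\Lambda$ contain nontrivial nilpotents; a finite-dimensional self-injective algebra of finite global dimension is necessarily semisimple (the last term of any finite projective resolution is injective, hence splits off), so $\gl\Lambda=\infty$. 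For (f), when $\Lambda$ is semisimple any ordering of the isomorphism classes of indecomposable direct summands of $U$ is automatically a full strong exceptional collection; when $R$ is not reduced, such a collection would assemble into a tilting object with triangular endomorphism ring of finite global dimension, but derived equivalence preserves self-injectivity and the only self-injective algebras of finite global dimension are semisimple, contradicting (e).

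The principal obstacle is the opening step — making the equivalence $\qgr R\simeq\gr K$ rigorous at the level of finitely generated objects. Essential surjectivity hinges on the stabilization of $K_i$ supplied by $\dim R=1$, and fully faithfulness requires that passing to $\qgr R$ inverts precisely the torsion at $R_{>0}$. Once this foundation is in place, the remainder is a Morita-theoretic repackaging of standard facts about Artinian Gorenstein rings.
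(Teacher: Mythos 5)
Your proposal follows essentially the same route as the paper: the equivalence $\qgr R\simeq\mod^{\Z}K$ via $K\otimes_R(-)$ and $(-)_{\ge 0}$ (the paper's Proposition 4.12(c), citing [HIO, Prop.\ 6.21]), Morita theory with the progenerator $\bigoplus_{i=1}^p K(i)$, graded self-injectivity of $K$ for (d), and the reduced/semisimple dichotomy plus Eilenberg--Nakayama for (e). One small wobble in (f): you invoke ``derived equivalence preserves self-injectivity,'' which is not the cleanest (or most obviously true) invariant to cite here; the direct argument is that a full strong exceptional collection would give a tilting object with endomorphism algebra of finite global dimension, and since finiteness of global dimension is a derived invariant this forces $\gl\Lambda<\infty$, contradicting (e).
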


Now we discuss tilting objects on the algebraic side.
Just as we were considering $\per(\qgr R)$ on the geometric side rather than $\DDD^{\bo}(\qgr R)$, we consider the subcategory $\CM_0^{\Z}R$ of $\CM^{\Z}R$ in \eqref{CM0}. This can be described as
\begin{equation}\label{CM02}
\CM_0^{\Z}R=\{X\in\CM^{\Z}R\mid K\otimes_RX\in\proj K\}
\end{equation}
(Proposition \ref{CM0=CM02}).
Moreover $\CM^{\Z}_0R= \CM^{\Z}R$ holds if and only if $R$ is reduced.

There exists an integer $a\in\Z$ such that $\Ext^1_R(k,R(a))\simeq k$ in $\mod^{\Z}R$. We call $a$ the \emph{$a$-invariant} ($-a$ the \emph{Gorenstein parameter}) of $R$ \cite{BH,GN}.
It can be characterized as the smallest integer $a$
such that $R_{>a}=K_{>a}$ (Lemma \ref{basic properties of K}(a)).
When $R$ has a non-negative $a$-invariant, $\underline{\CM}_0^{\Z}R$ always has a tilting object by the following result.

\begin{theorem}\label{general tilting}
Under the setting (R1) and (R2), assume moreover that the $a$-invariant $a$ of $R$ is non-negative.
Then the following holds true.
\begin{enumerate}[\rm(a)]
\item $\underline{\CM}_0^{\Z}R$ has a tilting object
\[V:=\bigoplus_{i=1}^{a+p}R(i)_{\ge0}= \bigoplus_{i=1}^{a+p} R_{\ge i}(i).\]
\item We have a triangle equivalence $\underline{\CM}_0^{\Z}R\simeq\KKK^{\bo}(\proj \Gamma)$ for $\Gamma:=\underline{\End}^{\Z}_R(V)$.
\item We have
{\small\begin{equation}\label{gamma}
\Gamma\simeq\End^{\Z}_R(V)=
\begin{bmatrix}
R_0&0&\cdots&0&0&
0&0&\cdots&0&0\\
R_1&R_0&\cdots&0&0&
0&0&\cdots&0&0\\
\vdots&\vdots&\ddots&\vdots&\vdots
&\vdots&\vdots&\cdots&\vdots&\vdots\\
R_{a-2}&R_{a-3}&\cdots&R_0&0&
0&0&\cdots&0&0\\
R_{a-1}&R_{a-2}&\cdots&R_1&R_0&
0&0&\cdots&0&0\\
K_a&K_{a-1}&\cdots&K_2&K_1&
K_0&K_{-1}&\cdots&K_{2-p}&K_{1-p}\\
K_{a+1}&K_a&\cdots&K_3&K_2&
K_{1}&K_0&\cdots&K_{3-p}&K_{2-p}\\
\vdots&\vdots&\vdots&\vdots&\vdots
&\vdots&\vdots&\ddots&\vdots&\vdots\\
K_{a+p-2}&K_{a+p-3}&\cdots&K_p&K_{p-1}&
K_{p-2}&K_{p-3}&\cdots&K_{0}&K_{-1}\\
K_{a+p-1}&K_{a+p-2}&\cdots&K_{p+1}&K_p&
K_{p-1}&K_{p-2}&\cdots&K_{1}&K_0
\end{bmatrix}.
\end{equation}}
\item $\Gamma$ is an Iwanaga-Gorenstein $k$-algebra, that is, $\injdim\Gamma_\Gamma=\injdim_{\Gamma}\Gamma<\infty$.
\item $R$ is reduced if and only if $\Gamma$ has finite global dimension.
\end{enumerate}
\end{theorem}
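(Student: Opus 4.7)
The overall strategy is to split $V$ as $V = V_1 \oplus V_2$, where $V_1 := \bigoplus_{i=1}^{a} R(i)_{\ge 0}$ captures the finite-length part of $\underline{\CM}_0^{\Z}R$ and $V_2 := \bigoplus_{i=a+1}^{a+p} R(i)_{\ge 0}$ captures the $\qgr$-part. The hypothesis $a \ge 0$ is exactly what places us in the non-negative regime of an Orlov-style semi-orthogonal decomposition, in which $\per(\qgr R)$ embeds into $\underline{\CM}_0^{\Z}R$ with a complement of finite length. The key matching observation is that for $i > a$ one has $R_{>a} = K_{>a}$ by Lemma \ref{basic properties of K}(a), so $R(i)_{\ge 0} = K(i)_{\ge 0}$; hence $V_2$ is, up to the periodicity $K(p) \simeq K$, a shift of the geometric tilting object $U$ of Theorem \ref{general tilting for qgr}. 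For $1 \le i \le a$ the short exact sequence
\[
0 \to R(i)_{\ge 0} \to R(i) \to R(i)_{<0} \to 0
\]
has finite-length cokernel with composition factors among $k, k(-1), \ldots, k(1-a)$, exhibiting each summand of $V_1$ as a syzygy of a length module and thereby producing the length-direction complement.

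To prove (a) I would proceed in three steps. \emph{Containment} in $\CM_0^{\Z}R$: each $R(i)_{\ge 0}$ is torsion-free as an $R$-submodule of the free module $R(i)$, hence maximal Cohen-Macaulay in dimension one, and inverting a positive-degree non-zero-divisor (Lemma \ref{basic properties of K}(d)) turns it into $K(i) \in \proj K$, so containment follows from the characterization \eqref{CM02}. \emph{Rigidity}: to show $\Hom_{\underline{\CM}_0^{\Z}R}(V, V[n]) = 0$ for $n \neq 0$, the block on $V_2$ follows from Theorem \ref{general tilting for qgr}(a) transported across an embedding $\per(\qgr R) \hookrightarrow \underline{\CM}_0^{\Z}R$, while the remaining blocks reduce, via the above short exact sequence, to $\Ext$ computations involving free and finite-length modules that vanish in the required degrees. \emph{Generation}: combining the fact that $V_2$ generates the $\qgr$-image (Theorem \ref{general tilting for qgr}(a)) with iterated cones on the triangles coming from $R(i) \to R(i)_{<0}$ to produce all the simples $k(1-i)$ for $1 \le i \le a$, the semi-orthogonal decomposition of Theorem \ref{embedding2} then forces $\thick V = \underline{\CM}_0^{\Z}R$. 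Part (b) is immediate from Proposition \ref{tilting functor}. For (c), a direct computation identifies the $(i,j)$-block $\Hom^{\Z}_R(R(j)_{\ge 0}, R(i)_{\ge 0})$ with multiplication by an element of degree $i-j$, constrained to $R_{i-j}$ when both indices lie in the $R$-block and to $K_{i-j}$ once we cross into the $V_2$-block; this yields the pattern in \eqref{gamma}. The identity $\End^{\Z}_R(V) = \underline{\End}^{\Z}_R(V)$ is verified by checking that no nonzero degree-zero morphism between these modules factors through a graded free $R$-module.

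For (d), the Auslander-Reiten-Serre duality on $\underline{\CM}_0^{\Z}R$ (Proposition \ref{AR duality for CM}) transports across the equivalence of (b) to a Serre functor on $\KKK^{\bo}(\proj \Gamma)$, which forces $\Gamma$ to be Iwanaga-Gorenstein. For (e), the matrix \eqref{gamma} presents $\Gamma$ as a triangular gluing of a top-left $k$-algebra $A$ (whose diagonal entries are all $R_0 = k$ and whose radical is nilpotent, giving finite global dimension) with the bottom-right algebra $\Lambda$ from Theorem \ref{general tilting for qgr}. The standard gluing criterion for triangular matrix rings then reduces finite global dimension of $\Gamma$ to that of $\Lambda$, which by Theorem \ref{general tilting for qgr}(e) is equivalent to $R$ being reduced. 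The main obstacle in this program is the generation step of (a): one must extract the Orlov-type decomposition in exactly the form valid for $a \ge 0$ and verify that $V_1$ realizes the length-direction complement. This is precisely where the sign of $a$ is essential, and where the case $a < 0$ in Theorem \ref{summary} fails to yield a tilting (only a silting) object.
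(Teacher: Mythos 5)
Your proposal matches the paper's proof essentially exactly: both split $V$ into the geometric summand $V_2\cong U=\bigoplus_{i=a+1}^{a+p}K(i)_{\ge 0}$ and the finite-length summand $V_1$ (realized stably as the syzygies of $(R/R_{\ge i})(i)$), invoke the semi-orthogonal decomposition $\underline{\CM}^{\Z}_0R\simeq\UU\perp\DDD^{\bo}(\mod^{[-a,-1]}R)$ from Theorem \ref{embedding2}, and glue the two tilting objects, with (b)--(e) following from Proposition \ref{tilting functor}, the matrix computation, and the triangular-algebra global dimension bound exactly as you indicate. The one ingredient your rigidity step glosses over is Proposition \ref{basic properties of K2}(b)---the injectivity of $K(i)_{\ge 0}$ in $\mod^{\ge 0}R$---which is what actually forces $\Ext^{\ell}_R(R(j)_{\ge 0},K(i)_{\ge 0})_0=0$ for $\ell>0$; calling it ``Ext computations involving free and finite-length modules'' undersells it, since $K(i)_{\ge 0}$ is neither.
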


The above $V$ is an analog of the tilting object in $\underline{\mod}^{\Z}A$ given in \cite{Ya} for a $\Z$-graded finite dimensional self-injective algebras.

As a special case of Theorem \ref{general tilting}, we obtain the following result for reduced rings.

\begin{corollary}\label{general tilting reduced}
Under the setting (R1) and (R2), assume moreover that $R$ is reduced and not regular.
Then the following holds true.
\begin{enumerate}[\rm(a)]
\item The $a$-invariant $a$ of $R$ is non-negative.
\item $\underline{\CM}^{\Z}R$ has a tilting object
\[V:=\bigoplus_{i=1}^{a+p}R(i)_{\ge0}= \bigoplus_{i=1}^{a+p} R_{\ge i}(i).\]
\item We have a triangle equivalence $\underline{\CM}^{\Z}R\simeq\DDD^{\bo}(\mod\Gamma)$, where $\Gamma:=\underline{\End}^{\Z}_R(V)$ is a finite dimensional $k$-algebra with finite global dimension.
\item There exists an ordering in the isomorphism classes of indecomposable direct summands of $V$ which forms a full strong exceptional collection in $\underline{\CM}^{\Z}R$.
\end{enumerate}
\end{corollary}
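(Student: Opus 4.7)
The plan is to deduce (a) from the structural properties of $K$ under the reducedness assumption, and then obtain (b)--(d) as formal consequences of Theorems~\ref{general tilting} and~\ref{general tilting for qgr}.

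\emph{For part (a)}, I argue contrapositively: assume $a\le -1$ and show that $R$ is regular. By Lemma~\ref{basic properties of K}(a), $a<0$ forces $R_i=K_i$ for all $i\ge 0$; in particular $K_0=R_0=k$ is a field. Since $R$ is reduced of Krull dimension one and $\Z$-graded, the total quotient ring $K$ is a reduced zero-dimensional $\Z$-graded ring, and its (necessarily homogeneous) minimal primes decompose $K$ as a finite product $\prod_j L_j$ of graded fields. The isomorphism $K(p)\simeq K$ is realized by multiplication by a homogeneous unit $u\in K_p$, whose projection to each $L_j$ is a unit of degree $p$; hence each $L_j=F_j[t_j^{\pm 1}]$ with $F_j\supseteq k$ a field extension and $\deg t_j$ a positive divisor of $p$. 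Taking degree-zero components gives $k=K_0=\prod_j F_j$, which forces a single factor with $F=k$, and minimality of $p$ then forces $\deg t=p$. Hence $K=k[t^{\pm 1}]$ and $R=K_{\ge 0}=k[t]$ is a polynomial $k$-algebra, which is regular.

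\emph{For parts (b) and (c)}, part (a) gives $a\ge 0$, so Theorem~\ref{general tilting} applies. The reducedness of $R$ yields $\CM_0^{\Z}R=\CM^{\Z}R$ (as recorded after~\eqref{CM02}), so the object $V$ of Theorem~\ref{general tilting}(a) is tilting in $\underline{\CM}^{\Z}R$, and Theorem~\ref{general tilting}(b,e) provides the triangle equivalence $\underline{\CM}^{\Z}R\simeq\KKK^{\bo}(\proj\Gamma)$ with $\Gamma$ of finite global dimension. Since $\Gamma$ is finite-dimensional and of finite global dimension, $\KKK^{\bo}(\proj\Gamma)\simeq\DDD^{\bo}(\mod\Gamma)$, which yields (c).

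\emph{For part (d)}, transfer the problem to $\DDD^{\bo}(\mod\Gamma)$ along the equivalence of (c), so that indecomposable direct summands of $V$ correspond to indecomposable projective $\Gamma$-modules. The matrix description~\eqref{gamma} exhibits $\Gamma$ as block lower-triangular with respect to the partition $\{1,\ldots,a\}\sqcup\{a+1,\ldots,a+p\}$: the upper-left $a\times a$ block is a lower-triangular matrix algebra with $R_0=k$ on the diagonal, the lower-right $p\times p$ block is the algebra $\Lambda$ of Theorem~\ref{general tilting for qgr}, and the upper-right block vanishes. Ordering the indecomposable summands attached to the $\Lambda$-block first (any order works, by Theorem~\ref{general tilting for qgr}(f) together with $\per(\qgr R)\simeq\DDD^{\bo}(\mod\Lambda)$) and then those attached to the upper-left block in the descending order $P_a,P_{a-1},\ldots,P_1$ produces the required full strong exceptional collection: the Hom-vanishing across the two blocks is provided by the zero upper-right block of $\Gamma$, and higher Ext vanishing is automatic since all these objects are projective. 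The substantive part of the argument is (a); (b)--(d) follow essentially formally from the already-established machinery.
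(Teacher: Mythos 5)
Your proofs of (a)--(c) are correct and take the same route as the paper: for (a), the paper's Proposition~\ref{negative and reduced imply regular} argues identically, using \cite[Lemma 1.5.7]{BH} for the structure of $K$ as a finite product of Laurent polynomial rings over field extensions of $k$, which you essentially rederive; for (b) and (c) both you and the paper simply invoke Theorem~\ref{general tilting} together with the observation that $\CM_0^{\Z}R=\CM^{\Z}R$ for $R$ reduced.

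For (d), however, the explicit ordering you propose is reversed. Identifying the summand $R(i)_{\ge0}$ of $V$ with the projective right $\Gamma$-module $P_i=e_i\Gamma$ given by the $i$-th row of the matrix \eqref{gamma}, one has $\Hom_\Gamma(P_i,P_j)\cong e_j\Gamma e_i=\Gamma_{ji}$, the $(j,i)$-entry. The vanishing upper-right block of $\Gamma$ (rows $1,\dots,a$, columns $a+1,\dots,a+p$) therefore says $\Hom_\Gamma(P_i,P_j)=0$ whenever $i>a\ge j$, i.e.\ maps \emph{from} the $\Lambda$-block projectives \emph{to} the upper-left block vanish, while the lower-left block $K_{\bullet}$ is nonzero, so the reverse direction does not vanish. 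Similarly, inside the upper-left block $\Hom_\Gamma(P_i,P_j)=\Gamma_{ji}=R_{j-i}$ vanishes precisely when $j<i$. Since an exceptional collection $(E_1,\dots,E_n)$ requires $\Hom(E_s,E_t[\ell])=0$ for all $\ell$ whenever $s>t$ (no ``backward'' maps, in the convention of the cited reference \cite{Hu}), the correct ordering is $P_1,\dots,P_a$ in \emph{ascending} order \emph{followed} by the $\Lambda$-block projectives (in any order) --- not the $\Lambda$-block first and then $P_a,\dots,P_1$ as you wrote. The rest of your part-(d) reasoning (block triangularity, strong Ext-vanishing because the objects are projective, fullness since these are all the indecomposable projectives) is sound, so the fix is simply to reverse the order.
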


Note that, for the case of hypersurfaces, a different tilting object with a much nicer endomorphism algebra was constructed in \cite{HI} before this paper.

Now we discuss the case when $R$ has a negative $a$-invariant. 
In this case the following result shows that $\underline{\CM}_0^{\Z}R$ never has a tilting object except for the trivial case, where we denote by $\thick P$ the smallest thick subcategory containing $P$.
We refer to Section \ref{negativa case} for a concrete example.

\begin{theorem}\label{negative case}
Under the setting (R1) and (R2), assume moreover that the $a$-invariant $a$ of $R$ is negative.
Then the following holds true.
\begin{enumerate}[\rm(a)]
\item $\underline{\CM}_0^{\Z}R$ has a silting object $\bigoplus_{i=1}^{a+p}R(i)_{\ge0}$.
\item We have a triangle equivalence $\underline{\CM}_0^{\Z}R\simeq\KKK^{\bo}(\proj \Lambda)/\thick P$, where $\Lambda$ is given by \eqref{lambda} and $P$ is the projective $\Lambda$-module corresponding to the first $-a$ rows.
\item $\underline{\CM}_0^{\Z}R$ has a tilting object if and only if $R$ is regular.
\end{enumerate}
\end{theorem}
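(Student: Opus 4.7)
The plan is to deduce all three parts from Theorem~\ref{general tilting for qgr} (the tilting object for $\per(\qgr R)$) together with the semi-orthogonal decomposition provided by Theorem~\ref{embedding2}. First, the key simplification: when $a<0$, the characterization $R_{>a}=K_{>a}$ of the $a$-invariant (Lemma~\ref{basic properties of K}(a)) yields $R_j=K_j$ for every $j\ge 1$, and hence $R_{\ge i}=K_{\ge i}$ for all $i\ge 1$. Consequently
\[
V := \bigoplus_{i=1}^{a+p} R(i)_{\ge 0} \;=\; \bigoplus_{i=1}^{a+p} K(i)_{\ge 0}
\]
as graded $R$-modules, so $V$ is naturally a direct summand of the tilting object $U=\bigoplus_{i=1}^{p}K(i)_{\ge 0}$ of $\per(\qgr R)$. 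Writing $U=V\oplus W$ with $W$ the $-a$ remaining summands, the equivalence $\per(\qgr R)\simeq\KKK^{\bo}(\proj\Lambda)$ of Theorem~\ref{general tilting for qgr}(b) sends $W$ to an indecomposable-projective sum of $\Lambda$ cut out by $-a$ of its idempotent-defining rows, namely the projective $P$ from the theorem statement.

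Next I would invoke the negative-case analogue of Orlov's semi-orthogonal decomposition (Theorem~\ref{embedding2}): it identifies $\underline{\CM}_0^{\Z}R$, viewed as a thick subcategory of $\DDD^{\bo}(\mod^{\Z}R)/\KKK^{\bo}(\proj^{\Z}R)$ via Proposition~\ref{subcategory of Dsg}, with the Verdier quotient $\per(\qgr R)/\thick W$. Translating through $\per(\qgr R)\simeq\KKK^{\bo}(\proj\Lambda)$ yields the equivalence
\[
\underline{\CM}_0^{\Z}R\;\simeq\;\KKK^{\bo}(\proj\Lambda)/\thick P
\]
of part~(b). Part~(a) then follows at once: the image of $U$ in the Verdier quotient remains a generator, the image of $W$ is zero, the image of $V$ is still $V$, and the vanishing $\Hom(V,V[n])=0$ for $n>0$ is inherited from the tilting property of $U$ in $\per(\qgr R)$. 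Hence $V$ is a silting object of $\underline{\CM}_0^{\Z}R$.

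For part~(c), the ``if'' direction is immediate since $R$ regular gives $\underline{\CM}_0^{\Z}R=0$ and the zero object is a (degenerate) tilting object. For the converse, suppose $R$ is non-regular; combined with $a<0$, the contrapositive of Corollary~\ref{general tilting reduced}(a) forces $R$ to be non-reduced, and Theorem~\ref{general tilting for qgr}(e) then gives $\gl\Lambda=\infty$. To rule out any tilting object, I would exhibit a nonzero periodic object $X\in\underline{\CM}_0^{\Z}R$ satisfying $X\cong X[n]$ for some $n>0$, arising from the nilpotent ideal $\sqrt{0}\subseteq R$ (nonzero because $R$ is non-reduced). Such a nonzero periodic object cannot exist in $\KKK^{\bo}(\proj\Gamma)$ for any ring $\Gamma$, since bounded complexes of projectives have bounded cohomology; this precludes the existence of any tilting object $T$, which would yield $\underline{\CM}_0^{\Z}R\simeq\KKK^{\bo}(\proj\End T)$. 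The main obstacle is precisely constructing this periodic object and verifying that it lies in $\underline{\CM}_0^{\Z}R$ (and not merely in $\underline{\CM}^{\Z}R$); this requires a delicate analysis of the syzygies of $k$ and of the graded structure of $K/R$, where non-reducedness manifests as nilpotent elements contributing to the singularity category.
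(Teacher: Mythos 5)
Your treatment of parts (a)--(b) follows the same route as the paper: identify $\underline{\CM}_0^{\Z}R$ with a Verdier quotient of $\per(\qgr R)\simeq\KKK^{\bo}(\proj\Lambda)$ by the thick subcategory generated by the complementary summand of the tilting object $U$, using Theorem~\ref{embedding2} and Proposition~\ref{subcategory of Dsg}. However, there is a real gap in (a): you assert that the vanishing $\Hom(V,V[n])=0$ for $n>0$ in the quotient is ``inherited'' from the tilting property of $U$ upstairs. This is false in general --- Hom-spaces in a Verdier quotient $\TT/\SS$ are computed as colimits over roofs and are not controlled by Hom-spaces in $\TT$. What rescues the argument is precisely the silting reduction theorem of Iyama--Yang, stated in the paper as Proposition~\ref{silting reduction}: if $U$ is silting in $\UU$ and $P\in\add U$, then the image of $U$ is silting in $\UU/\thick P$. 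This is a nontrivial result and must be invoked (as the paper does); it cannot be dismissed as automatic.

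For part (c) you take a genuinely different approach from the paper, and you yourself flag it as incomplete. The paper's argument is: suppose $R$ is non-regular and a tilting object $T$ exists. By Theorem~\ref{tilting functor for CM}, there is a triangle equivalence $F:\underline{\CM}_0^{\Z}R\simeq\KKK^{\bo}(\proj\Lambda)$ intertwining the degree-shift $(a)$ with the Nakayama functor $\nu$. One has $H^i(\nu^\ell(\Lambda))=0$ for $i>0$ trivially; on the other hand, a bounded-generation argument on $T$ together with $a<0$ shows $H^{-i}(\nu^\ell(\Lambda))=\underline{\Hom}^{\Z}_R(T,\Omega^iT(\ell a))=0$ for all $i\ge0$ once $\ell\gg0$. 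Thus $\nu^\ell(\Lambda)$ is acyclic, hence zero in $\KKK^{\bo}(\proj\Lambda)$, contradicting that $\nu$ is an autoequivalence. Your alternative idea --- produce a nonzero object $X$ with $X\simeq X[n]$ for some $n>0$ and argue no such object exists in $\KKK^{\bo}(\proj\Gamma)$ --- would indeed yield a contradiction if carried out, but you do not construct $X$, and it is not at all clear that $\underline{\CM}_0^{\Z}R$ contains a periodic object here: the explicit description in Proposition~\ref{no tilting}, where $\underline{\CM}_0^{\Z}R\simeq\per k[w]/(w^2)$ for a DG algebra with generator in negative degree and where the AR-components are of type $\Z A_\infty$, gives no evidence of periodicity. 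So part (c) remains unproven in your proposal, and the missing step is essential --- it is the heart of the theorem.
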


As an application of our results, we calculate the Grothendieck groups of the triangulated categories $\per(\qgr R)$ and $\underline{\CM}_0^{\Z}R$.
We decompose $K$ into a product $K=K^1\times\cdots\times K^m$ of rings $K^i$ which are ring-indecomposable.
For each $1\le i\le m$, let $p_i$ be the smallest positive integer satisfying $K^i(p_i)\simeq K^i$ in $\mod^{\Z}K$.

\begin{corollary}\label{grothendieck}
Under the setting (R1) and (R2), the following holds true.
\begin{enumerate}[\rm(a)]
\item The Grothendieck group of $\per(\qgr R)$ is a free abelian group of rank $\sum_{i=1}^mp_i$.
\item The Grothendieck group of $\underline{\CM}_0^{\Z}R$ is a free abelian group of rank $a+\sum_{i=1}^mp_i$.
\end{enumerate}
\end{corollary}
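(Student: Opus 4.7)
My strategy for both parts is to read off the Grothendieck group directly from the tilting or silting object provided by Theorems~\ref{general tilting for qgr}, \ref{general tilting} and~\ref{negative case}, using the general principle that for a silting object $T$ in a triangulated category with suitable enhancement, $K_0$ is a free abelian group whose rank equals the number of isomorphism classes of indecomposable direct summands of $T$.

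For part~(a), the tilting object of Theorem~\ref{general tilting for qgr}(b) is $U=\bigoplus_{i=1}^p K(i)_{\ge 0}$. Using $K=K^1\times\cdots\times K^m$, decompose $U=\bigoplus_{i=1}^p\bigoplus_{j=1}^m K^j(i)_{\ge 0}$. Summands with distinct $j$ are non-isomorphic since they have distinct $R$-annihilators (namely the kernels $\Kernel(R\to K^j)$). For fixed $j$, multiplication by a homogeneous unit of degree $-p_j$ in $K^j$ yields $K^j(i)_{\ge 0}\simeq K^j(i+p_j)_{\ge 0}$, and the minimality of $p_j$ prevents further coincidences; since $p_j\mid p$, this produces exactly $p_j$ isomorphism classes per index $j$. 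Summing gives $\sum_{i=1}^m p_i$.

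For part~(b) with $a\ge 0$, Theorem~\ref{general tilting}(b) gives a tilting object $V=\bigoplus_{i=1}^{a+p}R(i)_{\ge 0}$. Split $V=V_1\oplus V_2$ with $V_1=\bigoplus_{i=1}^a R(i)_{\ge 0}$ and $V_2=\bigoplus_{i=a+1}^{a+p}R(i)_{\ge 0}$. The characterization $R_{>a}=K_{>a}$ of the $a$-invariant forces $R(i)_{\ge 0}=K(i)_{\ge 0}$ for $i>a$, so $V_2$ is a reindexing of $U$ and contributes $\sum p_i$ isomorphism classes. The summands of $V_1$ are faithful $R$-modules (since $R_{\ge i}$ contains non-zero-divisors for $i\le a$) and are therefore non-isomorphic to those of $V_2$, which are annihilated by a nonzero ideal; the $a$ summands $R(1)_{\ge 0},\dots,R(a)_{\ge 0}$ of $V_1$ are distinguished pairwise by the degrees of their minimal generators. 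This yields the desired rank $a+\sum p_i$. When $a<0$, Theorem~\ref{negative case}(b) gives $\underline{\CM}_0^{\Z}R\simeq\KKK^{\bo}(\proj\Lambda)/\thick P$, and the Verdier-localization exact sequence
\[K_0(\thick P)\longrightarrow K_0(\KKK^{\bo}(\proj\Lambda))\longrightarrow K_0(\underline{\CM}_0^{\Z}R)\longrightarrow 0\]
reduces the computation to determining the rank of the leftmost image, which equals the number of isomorphism classes of indecomposable projective summands of $P$. An analysis of the first $-a$ rows of~\eqref{lambda} in this regime shows this count is exactly $-a$, and combined with part~(a) yields quotient rank $\sum p_i-(-a)=a+\sum p_i$.

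The main obstacle is the enumeration of isomorphism classes of indecomposable summands of the tilting or silting object, particularly in the $a<0$ case of~(b): one must verify that the $-a$ rows defining $P$ give rise to exactly $-a$ distinct isomorphism classes of indecomposable projective $\Lambda$-modules, which requires a careful interplay between the factorization $K=\prod K^j$, the individual periods $p_j$, and the sign and size of the $a$-invariant.
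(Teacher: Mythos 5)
Your overall strategy for part (a) matches the paper: both count iso-classes of indecomposable projectives in $\qgr R$, which come to $\sum_{i=1}^m p_i$. For part (b), however, the paper and you diverge: the paper derives (b) directly from (a) via Corollary~\ref{SOD}, which gives a semi-orthogonal decomposition yielding $K_0(\per(\qgr R))\simeq\Z^{-a}\oplus K_0(\underline{\CM}_0^{\Z}R)$ when $a<0$ and $K_0(\underline{\CM}_0^{\Z}R)\simeq K_0(\per(\qgr R))\oplus\Z^a$ when $a>0$ --- the $\Z^{|a|}$ factor being the Grothendieck group of $\KKK^{\bo}(\proj^{[\ell,\ell-a-1]}A)$ or $\DDD^{\bo}(\mod^{[\ell-a,\ell-1]}A)$, both of which are manifestly free of rank $|a|$. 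This one-line appeal to the semi-orthogonal decomposition is cleaner and avoids the enumeration you undertake.

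There are two genuine gaps in your argument for (b). First, in the case $a\ge0$, your claim that the summands of $V_2$ ``are annihilated by a nonzero ideal'' is false in general: since $R$ is one-dimensional Gorenstein, it has no embedded primes, so $R\hookrightarrow K$ is injective and each $K^j(i)_{\ge0}$ is a faithful $R$-module whenever $m=1$. The correct distinguishing feature is that $K^j(i)_{\ge0}$ carries a $K$-module structure (its $R$-action factors through $K$, since $K\otimes_R K^j(i)_{\ge0}\to K^j(i)_{\ge0}$ is surjective), while $R(i)_{\ge0}$ for $1\le i\le a$ does not, as $K\otimes_R R(i)_{\ge0}=K(i)$ has nonzero negative-degree parts whereas $R(i)_{\ge0}$ is concentrated in non-negative degrees. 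Second, in the case $a<0$ you explicitly defer the count of iso-classes of indecomposable summands of $P=\bigoplus_{i=1}^{-a}e_i\Lambda$, and this deferred step is precisely where the argument is at risk: in general that count is $\sum_{j=1}^m\min(-a,p_j)$, which is not $-a$ unless $m=1$. The missing fact is that $a<0$ forces $m=1$: by Lemma~\ref{basic properties of K}(a), $R_{\ge a+1}=K_{\ge a+1}$, and since $a+1\le0$ this gives $R=K_{\ge0}$; in particular $R_0=K_0$, so $K_0$ is a field, hence a connected commutative Artinian ring, hence $K$ is ring-indecomposable. This also secures $-a\le p$ (since $K_a\ne0$ and $K_i=0$ for $a<i<0$ while $K$ is $p$-periodic). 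Without establishing $m=1$, your count is undetermined and the rank computation does not go through. Once $m=1$ is in hand, however, each $e_i\Lambda$ is indecomposable ($\End^{\Z}_K(K(i))=K_0$ is local) and the $-a$ rows give $-a$ pairwise non-isomorphic indecomposable projectives, the image of $K_0(\thick P)$ is a free direct summand of $K_0(\KKK^{\bo}(\proj\Lambda))$, and the quotient is free of rank $p-(-a)=a+\sum p_j$ as desired.
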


Another application is the following observation, which shows that our category $\underline{\CM}_0^{\Z}R$ is a rich source of triangulated categories.

\begin{corollary}\label{abundance of CM}
Let $A$ be a $\Z$-graded commutative artinian Gorenstein ring such that $A=A_{\ge0}$ and $A_0$ is a field.
Then there exists a ring $R$ satisfying (R1) and (R2) such that $\underline{\CM}_0^{\Z}R$ is triangle equivalent to $\KKK^{\bo}(\proj^{\Z/a\Z}A)$, where $a$ is the $a$-invariant of $A$ and we regard $A$ as a $(\Z/a\Z)$-graded ring naturally.
\end{corollary}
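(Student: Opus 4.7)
The plan is to construct $R$ explicitly as a polynomial extension $R := A[t]$ with $\deg t = a$, assuming $a \geq 1$; the edge case $a = 0$ forces $A = k$ and can be handled separately (e.g.\ by any regular $R$ such as $k[t]$, with both sides trivially interpreted). Since $A$ is artinian Gorenstein of Krull dimension zero with $A_0 = k$, the ring $R$ is a $\Z$-graded commutative Gorenstein ring of Krull dimension one with $R_0 = A_0 = k$, so (R1) and (R2) are satisfied.

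First I compute the invariants of $R$. The total quotient ring is $K = R[t^{-1}] = A[t, t^{-1}]$, and multiplication by $t^{-1}$ gives a graded isomorphism $K \simeq K(a)$; inspection of the graded pieces (using that $A$ is concentrated in degrees $0,\dots,a$) shows that $p = a$ is the minimal such period. Using the standard formula $a(A \otimes_k k[t]) = a(A) + a(k[t])$ together with $a(k[t]) = -\deg t = -a$, we find $a(R) = 0$. Since $a(R) \geq 0$, Theorem \ref{general tilting} applies: $V = \bigoplus_{i=1}^{a} R(i)_{\geq 0}$ is a tilting object yielding a triangle equivalence $\underline{\CM}_0^{\Z}R \simeq \KKK^{\bo}(\proj \Gamma)$. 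Because $a(R) = 0$, the matrix description \eqref{gamma} degenerates to its lower-right $p \times p = a \times a$ block, so $\Gamma \simeq \Lambda = (K_{i-j})_{1 \leq i, j \leq a}$ in the notation of \eqref{lambda}.

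The key step is to identify $\Lambda$ with the endomorphism ring of a projective generator of $\proj^{\Z/a\Z}A$. Regarding $A$ naturally as $\Z/a\Z$-graded, its degree-$n$ component is $A^{\Z/a\Z}_n := \bigoplus_{j \equiv n \pmod a} A_j$. The identification $K = A \otimes_k k[t, t^{-1}]$ induces graded $k$-linear isomorphisms $K_n \simeq A^{\Z/a\Z}_{n \bmod a}$ (sending $\alpha \cdot t^j \mapsto \alpha$) that intertwine the multiplications in $K$ and in $A^{\Z/a\Z}$, yielding a ring isomorphism $\Lambda \simeq \tilde{A} := \End^{\Z/a\Z}_A\bigl(\bigoplus_{i=1}^{a} A(i)\bigr)$. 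Since $\bigoplus_{i=1}^{a} A(i)$ is a projective generator of $\mod^{\Z/a\Z}A$, Morita equivalence gives $\KKK^{\bo}(\proj \tilde{A}) \simeq \KKK^{\bo}(\proj^{\Z/a\Z}A)$, and the chain
\[ \underline{\CM}_0^{\Z}R \simeq \KKK^{\bo}(\proj \Gamma) \simeq \KKK^{\bo}(\proj \tilde{A}) \simeq \KKK^{\bo}(\proj^{\Z/a\Z}A) \]
completes the argument.

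The main obstacle is the ring-level identification $\Lambda \simeq \tilde{A}$: while the $k$-linear dimensions of the entries obviously match, one must verify that matrix multiplication in $\Lambda$, which uses multiplication in the ring $K$ with its $t^j$-bookkeeping, corresponds precisely (under $K_n \simeq A^{\Z/a\Z}_{n \bmod a}$) to matrix multiplication in $\tilde{A}$, which uses the $\Z/a\Z$-graded multiplication of $A$. The choice $\deg t = a$ is essential here, ensuring that the shifts by $t^j$ cancel out exactly modulo the cyclic identification of $\Z$-degrees.
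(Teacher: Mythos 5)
Your construction $R=A[t]$ with $\deg t=a$, the computations $a(R)=0$ and $p=a$, and the identification of $\Lambda$ with the $\Z/a\Z$-graded endomorphism ring followed by Morita equivalence all match the paper's proof; the only cosmetic difference is that you invoke Theorem~\ref{general tilting} directly (noting $\Gamma=\Lambda$ since $a(R)=0$) where the paper routes through Corollary~\ref{SOD} and Theorem~\ref{general tilting for qgr}, and these are interchangeable. One caveat: your proposed fix for the edge case $a=0$ (take $R=k[t]$) does not work, since then $\underline{\CM}_0^{\Z}R=0$ while $\KKK^{\bo}(\proj^{\Z}k)=\DDD^{\bo}(\mod^{\Z}k)\neq 0$; indeed by Corollary~\ref{grothendieck} the Grothendieck group of $\underline{\CM}_0^{\Z}R$ is always of finite rank, so no $R$ can realize this target, and the corollary (like the paper's own construction, which would have $\deg t=0$ and break~(R2)) must be read as implicitly assuming $a\ge 1$, i.e.\ $A\neq k$.
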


\noindent
{\bf Conventions }
All modules are right modules. The composition of morphisms (respectively, arrows) $f\colon X\to Y$ and $g\colon Y\to Z$ is denoted by $gf$.
We denote by $k$ an arbitrary field.

\medskip\noindent
{\bf Acknowledgements }
The authors would like to thank Tokuji Araya, Martin Herschend and Ryo Takahashi for useful discussions at the first stage of this project.
They also thank Shiro Goto, Helmut Lenzing, Atsushi Takahashi, Yuji Yoshino for valuable discussions.
We thank the Centre de Recerca Matem\`atica, and the organizers of the Workshop ``(Re)emerging methods in Commutative Algebra and Representation Theory'' in February 2015 where a part of this collaboration was done. We thank the referee for valuable suggestions.

\section{Examples}

\subsection{Hypersurface singularities} 

In this subsection, we study hypersurface singularities in dimension one with standard grading. In the rest, let $k$ be an arbitrary field,
\[R=k[x,y]/(f)\ \mbox{ with }\ \deg x=\deg y=1,\ \mbox{ and }\ \Gamma=\underline{\End}^{\Z}_R(V)\]
for the tilting object $V$ given in Theorem \ref{general tilting}.
Then $a=n-2$ holds for $n:=\deg f$, and there is a triangle equivalence
\[\underline{\CM}^{\Z}_0R\simeq\KKK^{\bo}(\proj\Gamma).\]
We show that $\Gamma$ has selfinjective dimension at most $2$ and possibly infinite global dimension. More precisely, we prove the following results in Section \ref{subsection proof of HS}.

\begin{theorem}\label{example hypersurface}
Under the above setting, the following holds true.
\begin{enumerate}[\rm(a)]
\item $\Gamma$ is an Iwanaga-Gorenstein $k$-algebra with $\injdim\Gamma_\Gamma=\injdim_{\Gamma}\Gamma\le2$.
\item Assume $n\ge4$. Then there is no Iwanaga-Gorenstein $k$-algebra $\Gamma'$ which is derived equivalent to $\Gamma$ and satisfies $\injdim\Gamma'_{\Gamma'}=\injdim_{\Gamma'}\Gamma'\le1$.
\end{enumerate}
In the rest, we assume $f = \prod_{i=1}^mf_i^{n_i}$, where $f_i=\alpha_ix+\beta_iy$ is a linear form such that $(f_i)\neq (f_j)$ for all $i\neq j$, and $n_i$ is a positive integer.
\begin{enumerate}[\rm(c)]
\item[\rm(c)] Let $K^i$ be the $\Z$-graded total quotient ring of $R^i=k[x,y]/(f_{i}^{n_i})$ for $1\le i\le m$. Then $K_{\ge0}\simeq K^1_{\ge0}\times\cdots\times K^m_{\ge0}$ holds and $K^i_{\ge0}$ is indecomposable in $\CM^{\Z}R$.
\item[\rm(d)] Let $(\alpha_i':\beta_i')\in\P^1_k$ be a point different from $(\alpha_i:\beta_i)$. Then $\Gamma$ is presented by the quiver
{\small\[\xymatrix@C=4em@R=.8em{
&&&&K^1_{\ge0}\ar@(dr,ur)_{b_1}\\
&&&&K^2_{\ge0} \ar@(dr,ur)_{b_2}\\
R(1)_{\ge0} \ar@/^0.4pc/[r]^{x} \ar@/_0.4pc/[r]_{y} &
R(2)_{\ge0} \ar@/^0.4pc/[r]^{x} \ar@/_0.4pc/[r]_{y} &  
\cdots \cdots  \ar@/^0.4pc/[r]^{x} \ar@/_0.4pc/[r]_{y} &
R(a)_{\ge0} \ar[ruu]^{a_1}  \ar[ru]_{a_2}  \ar[rd]^{a_{m-1}}  \ar[rdd]_{a_m}  &
\vdots & \\
&&&&K^{m-1}_{\ge0}\ar@(dr,ur)_{b_{m-1}}\\
&&&&K^m_{\ge0}\ar@(dr,ur)_{b_m}}
\]}
with relations
\[xy=yx,\ b_i^{n_i}=0,\ a_i(\alpha_i x + \beta_i y)=b_ia_i(\alpha'_ix+ \beta'_i y).\]
\item[\rm(e)] $n_1=\cdots=n_m=1$ holds if and only if $\gl\Gamma<\infty$ if and only if $\gl\Gamma\le2$.  
\end{enumerate}
\end{theorem}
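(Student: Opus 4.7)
My plan is to establish (c) and (d) first by direct computation, then deduce (a) and (e) from the explicit presentation, and finally tackle (b).

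\emph{Parts (c) and (d).} The minimal primes of $R$ are $\mathfrak{p}_i=(f_i)/(f)$, and each $f_j$ with $j\ne i$ lies outside $\mathfrak{p}_i$, so inverting homogeneous nonzerodivisors makes the $\mathfrak{p}_iK$ pairwise comaximal in $K$. A Chinese-remainder argument then yields $K\simeq\prod_{i=1}^mK^i$ with $K^i$ the total quotient ring of $R^i=k[x,y]/(f_i^{n_i})$, and taking $\ge 0$ parts gives $K_{\ge0}\simeq\bigoplus_iK^i_{\ge0}$. Each $K^i$ is Artinian local with maximal ideal generated by the nilpotent $f_i$, so $\End^{\Z}_R(K^i_{\ge0})\simeq K^i_0$ is local and $K^i_{\ge0}$ is indecomposable in $\CM^{\Z}R$. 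For (d), observe that $p_i=1$ for every $i$ (any $f_j$ with $(f_j)\ne(f_i)$ is a degree-one unit in $K^i$, and when $m=1$ the variable $x$ or $y$ plays the same role), so $p=1$ and $R(a+1)_{\ge0}\simeq K_{\ge0}\simeq\bigoplus_jK^j_{\ge0}$. Combining with Theorem \ref{general tilting}(a) identifies the indecomposable summands of $V$ as the $a+m$ listed vertices. The arrows $R(i)_{\ge0}\to R(i+1)_{\ge0}$ are the multiplications by $x,y$; each $a_i$ is induced by the structural map $R\to K^i$; each loop $b_i$ is multiplication by the degree-zero element $f_i/(\alpha_i'x+\beta_i'y)$, which generates the maximal ideal of $K^i_0$. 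The listed relations then follow from the identity $f_i=b_i\cdot(\alpha_i'x+\beta_i'y)$ and the nilpotency $f_i^{n_i}=0$ in $K^i$.

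\emph{Parts (a) and (e).} The matrix presentation \eqref{gamma} together with (d) displays $\Gamma$ as a triangular matrix algebra whose diagonal blocks are a hereditary algebra on the $R(i)_{\ge0}$-vertices (a truncated Beilinson-type quiver with two arrows $x,y$ at each step) and the self-injective algebra $K_0=\prod_iK^i_0$ on the $K^i_{\ge0}$-vertices (each $K^i_0$ local of Loewy length $n_i$). A standard computation of $\Ext^{\ge1}_\Gamma(-,\Gamma)$ via this triangular decomposition, combined with the Iwanaga-Gorenstein property of Theorem \ref{general tilting}(d), yields $\injdim\Gamma_\Gamma=\injdim_\Gamma\Gamma\le2$, proving (a). For (e), $R$ is reduced iff $f$ is squarefree iff $n_1=\cdots=n_m=1$; by Theorem \ref{general tilting}(e) this is equivalent to $\gl\Gamma<\infty$, and in that case (a) together with the identity $\gl\Gamma=\injdim\Gamma_\Gamma$ gives $\gl\Gamma\le2$.

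\emph{Part (b), the main obstacle.} Suppose for contradiction that $\Gamma'$ is Iwanaga-Gorenstein with $\injdim\Gamma'\le1$ and derived equivalent to $\Gamma$, so $\per\Gamma'\simeq\per\Gamma\simeq\underline{\CM}_0^{\Z}R$. The plan is to derive a contradiction from the Serre structure of $\underline{\CM}^{\Z}R$: the graded hypersurface has Serre functor equal to the grading shift $(n-2)$, and together with the $2$-periodicity $\Omega^2\simeq(-n)$ (equivalently $[2]\simeq(n)$) this yields $\mathbb{S}^n\simeq[2n-4]$, so $\underline{\CM}^{\Z}R$ has fractional Calabi-Yau dimension $2-4/n$. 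For $n\ge4$ this value is at least $1$, whereas the Serre functor of $\per\Gamma'$ for a $1$-Iwanaga-Gorenstein $\Gamma'$ (necessarily either hereditary or self-injective up to derived equivalence) imposes either fractional Calabi-Yau dimension strictly less than $1$ or gives a non-fractionally-Calabi-Yau category; the sought contradiction is extracted from this incompatibility. Concretely, I would compare the Nakayama-Serre autoequivalence of $\per\Gamma'$ with the grading shift $(n-2)$ on $\underline{\CM}_0^{\Z}R$ and exhibit explicit indecomposables (for instance among the $R/(f_i)$ and their shifts) whose $\Ext$-pattern cannot be accommodated by any $t$-structure with a $1$-Iwanaga-Gorenstein heart. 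Making this incompatibility fully rigorous — tracking the Serre functor across the equivalence and producing the explicit obstruction objects — is the main technical difficulty.
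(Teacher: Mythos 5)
Your treatment of parts (c), (d), (e) is essentially sound, and (c) in particular takes a genuinely different route: you invoke the Chinese Remainder Theorem to split $K\simeq\prod_i K^i$ (via pairwise comaximality of the $\pp_iK$), whereas the paper factors through the integral-extension inclusion $R\subseteq R^1\times\cdots\times R^m$ with finite-length cokernel. Both work; the CRT route is arguably more self-contained. Your observation that $p_i=1$ (so $p=1$) and that $R(a+1)_{\ge0}\simeq\bigoplus_j K^j_{\ge0}$ is a tidy way to see that the tilting object has exactly the $a+m$ listed summands, and your identification of $b_i$ with multiplication by $f_i/(\alpha_i'x+\beta_i'y)\in K^i_0$ is correct. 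Part (e) matches the paper.

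However, parts (a) and (b) have genuine gaps. For (a), you assert a ``standard computation of $\Ext^{\ge1}_\Gamma(-,\Gamma)$ via the triangular decomposition'' but never perform it, and more importantly you mischaracterize the upper-left diagonal block: the subalgebra $R^a$ on the vertices $R(1)_{\ge0},\dots,R(a)_{\ge0}$ is \emph{not} hereditary once $a\ge3$, because the commutativity relation $xy=yx$ is in force (this is a Beilinson-type algebra of global dimension $2$, not a path algebra). The bound $\injdim\Gamma\le2$ is the actual content of (a) and does not follow for free from Theorem \ref{general tilting}(d) (which only gives finiteness); the paper proves it by writing down the explicit Koszul-type projective resolution $0\to P^{i+2}\to (P^{i+1})^{\oplus2}\to P^i\to S^i\to 0$ of the simples over $\Gamma^{\op}$ and verifying exactness entrywise, then splitting off the remaining injective via a short exact sequence. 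Some concrete computation of this kind is unavoidable here; merely appealing to a triangular structure with a ``hereditary'' block is both incorrect and insufficient.

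For (b), you correctly compute that $\S=(n-2)$ and $[2]=(n)$ give $\S^n\simeq[2(n-2)]$, so $\underline{\CM}_0^{\Z}R$ is fractionally $\tfrac{2(n-2)}{n}$-Calabi--Yau, which is $\ge 1$ exactly when $n\ge4$. But the step that turns this into a contradiction with $\injdim\Gamma'\le1$ is exactly what you flag as ``the main technical difficulty,'' and it is not a difficulty one can wave at: the paper closes the argument by citing a specific result (\cite[Proposition 2.1.10(c)]{HIMO}), which says that if $\KKK^{\bo}(\proj\Gamma')$ is fractionally $d$-Calabi--Yau with $d\ge1$ for an Iwanaga--Gorenstein $\Gamma'$, then $\injdim\Gamma'_{\Gamma'}>d$. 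Your intermediate claim that a $1$-Iwanaga--Gorenstein algebra is ``necessarily either hereditary or self-injective up to derived equivalence'' is false (cluster-tilted algebras, for instance, are $1$-Gorenstein but generically neither), so the dichotomy you propose to exploit does not exist. Without the cited inequality from HIMO or a replacement for it, part (b) is not proved.
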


In (e), one can show that $\Gamma$ is derived equivalent to $k\times k$ if $n=2$, a path algebra of type $D_4$ if $n=3$, and a canonical algebra of type $(2,2,2,2)$ of $n=4$ (see \cite{HI} and Proposition \ref{Tpq singularity}(a) below).
Also notice that, if $n\ge4$, then $\Gamma$ is not derived equivalent to a hereditary $k$-algebra by (b) above.

\subsection{Simple curve singularities}
In this subsection we study simple curve singularities.
They are precisely the ADE singularities when the base field is algebraically closed and the characteristic is different from $2$, $3$ and $5$ \cite[Section 9]{LW}.
Our result is the following.

\begin{theorem}\label{simple singularity}
Let $R=k[x,y]/(f)$ be an ADE singularity over an arbitrary field $k$ with minimal grading given by the list below.
Then $\underline{\CM}^{\Z}R$ is triangle equivalent to $\DDD^{\bo}(\mod kQ)$, where $Q$ is a Dynkin quiver of the following type.
\[\begin{array}{|c||c|c|c|c|c|}\hline
R&A_{n}&D_{n}&E_6&E_7&E_8\\ \hline
f&x^{n+1}-y^2&x^{n-1}-xy^2&x^4-y^3&x^3y-y^3&x^5-y^3\\ \hline
(\deg x,\deg y)&\begin{array}{cc}(1,\frac{n+1}{2})&\mbox{$n$ is odd}\\ (2,n+1)&\mbox{$n$ is even}\end{array}&\begin{array}{cc}(2,n-2)&\mbox{$n$ is odd}\\ (1,\frac{n}{2}-1)&\mbox{$n$ is even}\end{array}&(3,4)&(2,3)&(3,5)\\ \hline
Q&\begin{array}{cc}D_{\frac{n+3}{2}}&\mbox{$n$ is odd}\\ A_{n}&\mbox{$n$ is even}\end{array}
&\begin{array}{cc}A_{2n-3}&\mbox{$n$ is odd}\\ D_{n}&\mbox{$n$ is even}\end{array}&E_6&E_7&E_8\\ \hline
\end{array}\]
\end{theorem}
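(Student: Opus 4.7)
The plan is to apply Corollary \ref{general tilting reduced} to each entry of the table, then identify the resulting endomorphism algebra $\Gamma$ with the path algebra $kQ$ of the claimed Dynkin quiver up to derived equivalence.

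First I would verify the hypotheses. For each row, $R = k[x,y]/(f)$ is one-dimensional Gorenstein (as a hypersurface) with $R_0 = k$ and is non-regular since $\deg f > 1$. Reducedness is checked by factoring $f$: the polynomial $x^{n+1} - y^2$ is irreducible for $n$ even and splits as $(x^{(n+1)/2} - y)(x^{(n+1)/2} + y)$ for $n$ odd; $x(x^{n-2} - y^2)$ has two or three distinct irreducible factors according to the parity of $n$; $x^4 - y^3$ and $x^5 - y^3$ are irreducible; and $y(x^3 - y^2)$ has two distinct factors. Thus Corollary \ref{general tilting reduced} gives a triangle equivalence
\[
\underline{\CM}^{\Z}R \simeq \DDD^{\bo}(\mod\Gamma),
\]
where $\Gamma = \underline{\End}^{\Z}_R(V)$ has finite global dimension and $V = \bigoplus_{i=1}^{a+p} R(i)_{\ge 0}$.

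Second, I would match the numerical invariants. The $a$-invariant of the complete intersection $R = k[x,y]/(f)$ is $a = \deg f - \deg x - \deg y$, and the ring-indecomposable decomposition $K \simeq \prod_{j=1}^m K^j$ of the graded total quotient ring corresponds to the irreducible factorization of $f$, with each $K^j$ a graded field $k[t_j, t_j^{-1}]$ of period $p_j$ equal to the gcd of the degrees of $x$ and $y$ in that component. By Corollary \ref{grothendieck}(b), $K_0(\underline{\CM}^{\Z}R)$ is then free of rank $a + \sum_j p_j$, and a direct arithmetic check in each of the eight numerical cases verifies that this number equals the number of vertices of the target Dynkin quiver in the table.

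Finally, to determine the derived type of $\Gamma$, I would compute its Gabriel quiver with relations from the matrix presentation (3.3) of Theorem \ref{general tilting}(c). The vertices correspond to the indecomposable summands of $V$: the first $a$ come from the $R(i)_{\ge 0}$ for $i \le a$, and the remaining $\sum_j p_j$ come from the decompositions $R(i)_{\ge 0} = \bigoplus_j K^j_{\ge i}(i)$ for $a < i \le a+p$; the arrows are generated by the primitive homogeneous elements of $R$ and of $K$, and relations come from equating distinct paths that multiply to the same element of $K$. The main obstacle is then to show that $\Gamma$ is derived equivalent to $kQ$ in each specific case — for instance, by exhibiting a tilting complex in $\DDD^{\bo}(\mod\Gamma)$ with endomorphism algebra $kQ$ via iterated APR or BGP reflection tilts, or by invoking Happel's classification of piecewise hereditary algebras of Dynkin type and matching the Coxeter polynomial of $\underline{\CM}^{\Z}R$ to that of $kQ$.
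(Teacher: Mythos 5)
Your plan matches the paper's proof exactly: the paper applies Theorem \ref{general tilting} to get the explicit endomorphism algebra $\Gamma$ in each case, and then performs case-by-case silting mutations (using Mizuno's recipe \cite{M}, which realizes the APR/BGP reflections you allude to) to pass from $\Gamma$ to the hereditary path algebra $kQ$. The substance of the paper's argument is precisely those explicit mutation computations for $D_{2n+1}$, $D_{2n}$, $E_6$, $E_7$, $E_8$ (in the $A_n$ cases $\Gamma$ is already hereditary), which your proposal correctly identifies as ``the main obstacle'' but does not carry out. Be aware that your alternative route of invoking Happel's classification via Coxeter polynomials has a genuine gap: one must first establish that $\Gamma$ is piecewise hereditary of Dynkin type before the Coxeter polynomial pins down the quiver, and that piecewise-hereditariness is exactly what needs proving; the fractional Calabi--Yau dimension $2(\deg f - \deg x - \deg y)/\deg f$ together with the $K_0$-rank is a useful consistency check, but it does not by itself yield the derived equivalence.
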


This is an analogue of \eqref{simple dynkin} in dimension $2$.
The difference of types of $R$ and $Q$ was observed in \cite{DW} (see also \cite{Y,LW}).
We will prove Theorem \ref{simple singularity} in Section \ref{subsection proof of simple singularity}.

Our Theorem \ref{simple singularity} immediately recovers the following well known results. 

\begin{corollary}
Let $R=k[x,y]/(f)$ be as in Theorem \ref{simple singularity}, and $\widehat{R}$ the completion of $R$ at $R_{>0}$.
\begin{enumerate}[\rm(a)]
\item \cite{A} There are only finitely many indecomposable objects in $\CM^{\Z}R$
up to isomorphisms and degree shift. The stable Auslander-Reiten quiver of $\CM^{\Z}R$ is $\Z Q$ (see \cite{Ha1}).
\item \cite{J,DR,GK} There are only finitely many indecomposable objects in $\CM\widehat{R}$ up to isomorphisms.
\end{enumerate}
\end{corollary}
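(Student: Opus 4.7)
The plan is to deduce both assertions from the triangle equivalence $\underline{\CM}^{\Z}R\simeq \DDD^{\bo}(\mod kQ)$ furnished by Theorem~\ref{simple singularity}, combined with Happel's description of the derived category of a Dynkin algebra and with the classical gradability of MCM modules.

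\textbf{Proof of (a).} By Happel's theorem, $\DDD^{\bo}(\mod kQ)$ is Krull--Schmidt with Auslander--Reiten quiver the translation quiver $\Z Q$; for $Q$ Dynkin, Gabriel's theorem yields only finitely many indecomposables up to the suspension $[1]$ (one per positive root). The degree-shift autoequivalence $(1)$ on $\underline{\CM}^{\Z}R$ transports across the equivalence to an autoequivalence $\Psi$ of $\DDD^{\bo}(\mod kQ)$ which, commuting with $\tau$ and $[1]$, acts as an automorphism of the translation quiver $\Z Q$. Now I apply Auslander--Reiten--Serre duality (Proposition~\ref{AR duality for CM}): on $\underline{\CM}^{\Z}R$ the Serre functor in dimension one is the degree shift $(-a)$, while on $\DDD^{\bo}(\mod kQ)$ it is the Nakayama functor $\nu=\tau[1]$; hence $\Psi^{-a}=\tau[1]$. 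When $a\neq 0$, every $\langle\Psi\rangle$-orbit on $\mathrm{ind}\,\DDD^{\bo}(\mod kQ)$ is a union of finitely many $\langle\tau[1]\rangle$-orbits, and the Nakayama functor has only finitely many orbits on indecomposables (a standard consequence of $\nu$ having a power equal to a shift in the Dynkin case). The single case $a=0$ in the table, namely the nodal curve $k[x,y]/(x^2-y^2)$, is checked by direct inspection: $R/(x-y)$ is the unique non-projective indecomposable graded MCM module up to degree shift. Together with the orbit $\{R(i)\}_{i\in\Z}$ of graded projectives, this shows that $\CM^{\Z}R$ has only finitely many indecomposables up to isomorphism and degree shift. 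The stable AR quiver of $\CM^{\Z}R$ then coincides with that of $\DDD^{\bo}(\mod kQ)$, namely $\Z Q$.

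\textbf{Proof of (b).} Every indecomposable MCM module over $\widehat{R}$ is isomorphic to the completion of some $\Z$-graded indecomposable MCM module over $R$; this gradability is classical for ADE hypersurface singularities (see e.g.\ \cite{LW}). Since $\widehat{N(i)}\cong\widehat{N}$ for every $i\in\Z$, the completion functor induces a surjection from the set of isoclasses in $\CM^{\Z}R$ modulo degree shift onto the set of isoclasses of indecomposables in $\CM\widehat{R}$. Finiteness of the former, by part (a), yields finiteness of the latter.

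\textbf{Main obstacle.} The crux is pinning down $\Psi$ via Serre duality well enough to run the orbit-counting argument in (a); the auxiliary fact that $\nu=\tau[1]$ has finitely many orbits on indecomposables of $\DDD^{\bo}(\mod kQ)$ for $Q$ Dynkin is delicate but standard. The case $a=0$ must be treated separately by inspection, and the gradability input needed in (b) is a point at which one simply appeals to the classical literature rather than reproving the statement.
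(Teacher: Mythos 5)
Your strategy — pull back the Dynkin derived category through the equivalence of Theorem~\ref{simple singularity} and run an orbit count — is the same idea the paper invokes, though the paper compresses it to ``(a) is immediate from Theorem~\ref{simple singularity}'' and cites \cite[Theorem~5]{AR2} for (b). Your write-up is more explicit and mostly correct, but it contains two errors and could be routed more cleanly.

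First, a sign slip: by Proposition~\ref{AR duality for CM} the Serre functor on $\underline{\CM}^{\Z}R$ in dimension $d=1$ is $(a)$, not $(-a)$; the relation transported by the equivalence is $\Psi^{a}\simeq\nu$. This does not affect the orbit count, since $\langle\Psi^{a}\rangle=\langle\Psi^{-a}\rangle$, but it should be stated correctly.

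Second, and more substantively, your handling of the $a=0$ case is wrong: for $A_1$ (so $f=x^2-y^2$ with $\deg x=\deg y=1$) the Dynkin type listed in Theorem~\ref{simple singularity} is $D_{2}=A_1\sqcup A_1$, so $\underline{\CM}^{\Z}R$ has \emph{two} indecomposable objects up to degree shift (the completions of $R/(x-y)$ and $R/(x+y)$ are the two branches of the node), not one. This is easily fixed, but it is a genuine error.

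More importantly, the case split on $a$ is unnecessary. The clean uniform argument uses the hypersurface identity $[2]=(\deg f)$ in $\underline{\CM}^{\Z}R$ (the paper records this in the proof of Theorem~\ref{example hypersurface}(b)). It gives $[1]^{2}\in\langle(1)\rangle$, hence $\langle(1)\rangle$ has index at most~$2$ in $\langle(1),[1]\rangle$; since $\langle[1]\rangle$ has finitely many orbits on indecomposables of $\DDD^{\bo}(\mod kQ)$ for $Q$ Dynkin, so does $\langle(1),[1]\rangle$, and a finite-index subgroup of a group with finitely many orbits again has finitely many orbits. This avoids both Serre duality and the low-rank exception.

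For (b), your gradability argument captures the right idea, but the step you label ``classical'' is precisely the content of \cite[Theorem~5]{AR2}, which passes directly from finiteness of graded CM modules up to degree shift to finiteness of CM modules over the completion; citing that result (as the paper does) is both sharper and safer than an informal appeal to \cite{LW}.
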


\begin{proof}
(a) is immediate from Theorem \ref{simple singularity}. 
(b) follows from (a) and \cite[Theorem 5]{AR2}.
\end{proof}

\subsection{Curve singularities $T_{pq}$}

Drozd-Greuel classified commutative noetherian rings in dimension one which are CM-tame in terms of $T_{pq}$ singularities \cite{DG}.
Recall that \emph{$T_{pq}$ singularities} over an algebraically closed field $k$ whose characteristic is different from $2$ have the form $k[x,y]/(f)$, where $f=x^p+\lambda x^2y^2+y^q$ with $p\le q$ and $\lambda\in k\backslash\{0,1\}$.

In this subsection, we deal with $\Z$-graded $T_{pq}$ singularities such that the variables $x$ and $y$ are homogeneous. This is precisely the case when $(p,q,\deg x,\deg y)$ is either $(4,4,1,1)$ or $(3,6,2,1)$. Our result below covers a slightly more general class of rings.
Recall that a \emph{canonical algebra of type $(2,2,2,2)$} is given by the following quiver with relations for $\lambda\in k\backslash\{0,1\}$ \cite{R}.
\[{\small\xymatrix@C=4em@R=.1em{
 & \bullet \ar[rdd]^{b_1}& \\
 & \bullet \ar[rd]|{b_2} &&b_1a_1+b_2a_2+b_3a_3=0\\
 \bullet \ar[ruu]^{a_1} \ar[ru]|{a_2} \ar[rd]|{a_3} \ar[rdd]_{a_4} & & \bullet &b_1a_1+\lambda b_2a_2+b_4a_4=0.\\
 & \bullet \ar[ru]|{b_3}&\\
  & \bullet \ar[ruu]_{b_4}&
}}\]
This algebra is derived equivalent to the weighted projective line of type $(2,2,2,2)$ \cite{GL1}.

We will prove the following result in Section \ref{proof for Tpq}.

\begin{proposition}\label{Tpq singularity}
Let $k$ be an arbitrary field and $R=k[x,y]/(f)$, where
\begin{enumerate}[\rm(a)]
\item $f=\prod_{i=1}^4(x-\alpha_iy)$ and $(\deg x,\deg y)=(1,1)$, or
\item $f=\prod_{i=1}^3(x-\alpha_iy^2)$ and $(\deg x,\deg y)=(2,1)$.
\end{enumerate}
If $R$ is reduced, then $\underline{\CM}^{\Z}R$ is triangle equivalent to $\DDD^{\bo}(\mod C)$, where $C$ is a canonical algebra of type $(2,2,2,2)$ with $\lambda=(\alpha_1-\alpha_4)(\alpha_2-\alpha_3)(\alpha_1-\alpha_3)^{-1}(\alpha_2-\alpha_4)^{-1}$ for (a) and 
$\lambda=(\alpha_2-\alpha_3)(\alpha_1-\alpha_3)^{-1}$ for (b).
\end{proposition}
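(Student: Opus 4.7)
The plan is to apply Corollary \ref{general tilting reduced} to reduce the statement to identifying the endomorphism algebra $\Gamma=\underline{\End}^{\Z}_R(V)$ with the canonical algebra $C$ of type $(2,2,2,2)$ up to derived equivalence. First I would check the hypotheses: in both cases $R$ is reduced (since $f$ splits into distinct factors) and non-regular, and the $a$-invariant $a=\deg f-\deg x-\deg y$ equals $2$ in case (a) and $3$ in case (b). In both cases $p=1$, because $y$ is a homogeneous non-zero-divisor of degree $1$ with $K=R[y^{-1}]$, and the ring decomposition $K\simeq K^1\times\cdots\times K^m$ mirrors the factorization of $f$ with $m=4$ (case (a)) or $m=3$ (case (b)). Corollary \ref{general tilting reduced} then supplies a tilting object $V=\bigoplus_{i=1}^{a+p}R(i)_{\ge0}$ and a triangle equivalence $\underline{\CM}^{\Z}R\simeq\DDD^{\bo}(\mod\Gamma)$. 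Using $R_{>a}=K_{>a}$, the top summand satisfies $R(a+p)_{\ge0}\simeq K_{\ge0}\simeq\bigoplus_{j=1}^{m}K^j_{\ge0}$, so $V$ has $a+m=6$ indecomposable summands in each case.

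Next I would write down the quiver with relations of $\Gamma$. In case (a) this is immediate from Theorem \ref{example hypersurface}(d) with $n_i=1$, so every loop $b_i$ vanishes, leaving a Kronecker subquiver $R(1)_{\ge0}\rightrightarrows R(2)_{\ge0}$ (on generators $x,y$) together with four arrows $a_j\colon R(2)_{\ge0}\to K^j_{\ge0}$ subject to $a_j(x-\alpha_jy)=0$. In case (b), Theorem \ref{example hypersurface} does not apply directly because of the non-standard grading, but the matrix description of Theorem \ref{general tilting}(c) yields, after reading off $k$-bases of the entries $R_i$ and $K_i$, a chain $R(1)_{\ge0}\xrightarrow{y}R(2)_{\ge0}\xrightarrow{y}R(3)_{\ge0}$ augmented by an extra arrow $x\colon R(1)_{\ge0}\to R(3)_{\ge0}$ (accounting for the second generator of $R_2$), three arrows $a_j\colon R(3)_{\ge0}\to K^j_{\ge0}$, and relations $a_j(x-\alpha_jy^2)=0$.

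The main step is to transform $\Gamma$ into $C$ by tilting mutations, following the recipe of Mizuno \cite{M} used throughout Subsection 2.2. In case (a), mutating at the source vertex $R(1)_{\ge0}$, whose left approximation in $\add(V/R(1)_{\ge0})$ is controlled by the Kronecker pair, produces a new summand with four outgoing arrows to the $K^j_{\ge0}$, yielding the characteristic ``one source, four middle, one sink'' quiver of $C$ with sink $R(2)_{\ge0}$; under this mutation the four relations $a_j(x-\alpha_jy)=0$ collapse to the two canonical-algebra relations with parameter $\lambda$ equal to the cross-ratio of $\alpha_1,\ldots,\alpha_4$. In case (b), an analogous (longer) mutation sequence along the chain $R(1)_{\ge0}\to R(2)_{\ge0}\to R(3)_{\ge0}$ yields the same canonical shape, with the four marked points on $\P^1$ being the three $(\alpha_i:1)$ together with $(1:0)$ (the latter arising from the orbifold point of the weighted projective line $\P(2,1)$ at $y=0$); their cross-ratio is $(\alpha_2-\alpha_3)/(\alpha_1-\alpha_3)$.

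The hardest part will be the explicit execution of these tilting mutations: writing down the approximations, tracking how each arrow and each relation transforms, and confirming that the mutated algebra is precisely $C$ with the asserted value of $\lambda$. Once this is verified the proposition follows.
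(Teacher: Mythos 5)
Your overall strategy matches the paper's: apply Corollary \ref{general tilting reduced} to obtain the tilting object $V$ and the algebra $\Gamma$, then perform tilting mutations to transform $\Gamma$ into the canonical algebra $C$. The initial steps (verifying $a=2$ resp.\ $3$, $p=1$, reading off the quiver of $\Gamma$ from Theorem \ref{example hypersurface}(d) in case (a) and from the matrix form in case (b)) are all correct and agree with the paper.

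However, there is a concrete error in the mutation step for case (a). You propose to mutate at the \emph{source} summand $R(1)_{\ge0}$, and describe the result as having a new source with four outgoing arrows to the $K^j_{\ge0}$'s and sink $R(2)_{\ge0}$. This cannot be right: in the starting quiver the arrows $a_j$ run \emph{from} $R(2)_{\ge0}$ \emph{to} $K^j_{\ge0}$, and a mutation at $R(1)_{\ge0}$ leaves those arrows untouched, so no arrows $K^j_{\ge0}\to R(2)_{\ge0}$ can appear. The paper instead mutates at the middle summand $R(2)_{\ge0}$: this reverses the four arrows $a_j$ into arrows $b_j\colon K^j_{\ge0}\to\bullet$ (where $\bullet$ replaces $R(2)_{\ge0}$), creates new arrows $R(1)_{\ge0}\to K^j_{\ge0}$, and the Kronecker double arrow disappears. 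The original source $R(1)_{\ge0}$ is \emph{retained} as the source of the canonical algebra, and the sink is the \emph{new} vertex $\bullet$, not $R(2)_{\ge0}$. In case (b) the paper likewise mutates first at $R(2)_{\ge0}$ (producing a quiver of the same shape as in case (a), with $K^3_{\ge0}$ playing the role of the fourth point and one extra relation $a_4y^2=0$) and then at $R(3)_{\ge0}$; your phrase ``mutation sequence along the chain'' is too vague to confirm you had the right summands in mind, but the geometric identification of the fourth marked point as $(1:0)$, i.e.\ $\infty$, is a correct heuristic for why $\lambda$ reduces to the simple ratio $(\alpha_2-\alpha_3)(\alpha_1-\alpha_3)^{-1}$. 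To repair the proof, replace the mutation at $R(1)_{\ge0}$ by a mutation at $R(2)_{\ge0}$ and carry out the approximation computation there.
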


Consequently, $\underline{\CM}^{\Z}R$ is triangle equivalent to $\DDD^{\bo}(\coh\X)$, where $\X$ is the weighted projective line of type $(2,2,2,2)$. It will be interesting to find out a direct explanation of this equivalence.

\subsection{Non-reduced examples}\label{negativa case}

In this subsection, let $k$ be an arbitrary field and
\[R=k[x,y]/(y^2)\ \mbox{ with $\deg x=n\ge1$ and $\deg y=1$.}\]
When $n\ge2$, this gives a typical example of rings with negative $a$-invariant.
It is known as a Bass order in a non-semisimple algebra \cite{HN}
and as a CM-countable ring \cite{BGS,LW}.

\begin{proposition}\label{no tilting}
Under the above setting, the following holds true.
\begin{enumerate}[\rm(a)]
\item The $a$-invariant of $R$ is $1-n$, and we have $K(n)\simeq K$.
\item $\per(\qgr R)$ is triangle equivalent to $\KKK^{\bo}(\proj\Lambda)$ for $\Lambda:=kQ/(z^2)$, where $Q$ is the following quiver with $Q_0=\Z/n\Z$.
{\small\[
\begin{xy}
0;<2.4pt,0pt>:<0pt,2.4pt>:: 
(0,0) *+{n}="a",
(10,0) *+{1}="b",
(17,-7) *+{2}="c",
(17,-17) *+{3}="d",
(10,-24) *+{4}="e",
(0,-24) *+{5}="f",
(-7,-17) *+{6}="g",
(-7,-7) *+{n-1}="h",
\ar "a";"b"^{z}
\ar "b";"c"^{z}
\ar "c";"d"^{z}
\ar "d";"e"^{z}
\ar "e";"f"^{z}
\ar "f";"g"^{z}
\ar@{.} "g";"h"
\ar "h";"a"^{z}
\end{xy}
\]}
\item 
The Auslander-Reiten quiver of $\per(\qgr R)\simeq\KKK^{\bo}(\proj\Lambda)$ has $n$ connected components.
For $i\in\Z/n\Z$, let $P^{i}=e_i\Lambda$ for the idempotent $e_i\in\Lambda$ corresponding to the vertex $i$, and for $a,b\in\Z$ with $b\ge0$, let $X^{i}_{a,b}$ be the complex
\[
\cdots\to0\to P^i\xrightarrow{z} P^{i+1}\xrightarrow{z} P^{i+2}\xrightarrow{z}\cdots\xrightarrow{z}P^{i+b-1}\xrightarrow{z} P^{i+b}\to0\to\cdots
\]
whose non-zero degrees are $a,a+1,\ldots,a+b$.
Then the following is a connected component for $i\in\Z/n\Z$.
{\small\[\xymatrix@R.5em@C.5em{
&X^{i+2}_{2,0}\ar[rd]&&X^{i+1}_{1,0}\ar[rd]&&X^i_{0,0}\ar[rd]&&X^{i-1}_{-1,0}\ar[rd]&&X^{i-2}_{-2,0}\\
\cdots&&X^{i+1}_{1,1}\ar[rd]\ar[ru]&&X^{i}_{0,1}\ar[ru]\ar[rd]&&X^{i-1}_{-1,1}\ar[ru]\ar[rd]&&X^{i-2}_{-2,1}\ar[ru]\ar[rd]&&\cdots\\
&X^{i+1}_{1,2}\ar[ru]\ar[rd]&&X^{i}_{0,2}\ar[ru]\ar[rd]&&X^{i-1}_{-1,2}\ar[ru]\ar[rd]&&X^{i-2}_{-2,2}\ar[ru]\ar[rd]&&X^{i-3}_{-3,2}\\
\cdots&&X^{i}_{0,3}\ar[ru]&&X^{i-1}_{-1,3}\ar[ru]&&X^{i-2}_{-2,3}\ar[ru]&&X^{i-3}_{-3,3}\ar[ru]&&\cdots\\
&\vdots&&\vdots&&\vdots&&\vdots&&\vdots
}\]}
\item 
$\underline{\CM}^{\Z}_0R$ is triangle equivalent to $\KKK^{\bo}(\proj\Lambda)/\thick P$ 
where $P=\bigoplus_{i=1}^{n-1}P^i$.
\item 
$\underline{\CM}_0^{\Z}R$ has a silting object $R(1)_{\ge0}$, and has a tilting object if and only if $n=1$.
It is triangle equivalent to the perfect derived category $\per k[w]/(w^2)$ for the DG (=differential graded) algebra $k[w]/(w^2)$ with $\deg w=1-n$ and zero differential.
\item The Auslander-Reiten quiver of $\underline{\CM}^{\Z}_0R$ has $n$ connected components. 
For $i>0$ and $j\in\Z$, let $R^i:=R+\langle x^{-\ell}y\mid 1\le\ell\le i\rangle_k$ and $R^{i,j}=R^i(j)$.
Then the following is a connected component for $j\in\Z/n\Z$.
{\small\[\xymatrix@R.8em@C0em{
&R^{1,j-2n}\ar[rd]&&R^{1,j-n}\ar[rd]&&R^{1,j}\ar[rd]&&R^{1,j+n}\ar[rd]&&R^{1,j+2n}\\
\cdots&&R^{2,j-2n}\ar[rd]\ar[ru]&&R^{2,j-n}\ar[ru]\ar[rd]&&R^{2,j}\ar[ru]\ar[rd]&&R^{2,j+n}\ar[ru]\ar[rd]&&\cdots\\
&R^{3,j-3n}\ar[ru]\ar[rd]&&R^{3,j-2n}\ar[ru]\ar[rd]&&R^{3,j-n}\ar[ru]\ar[rd]&&R^{3,j}\ar[ru]\ar[rd]&&R^{3,j+n}\\
\cdots&&R^{4,j-3n}\ar[ru]&&R^{4,j-2n}\ar[ru]&&R^{4,j-n}\ar[ru]&&R^{4,j}\ar[ru]&&\cdots\\
&\vdots&&\vdots&&\vdots&&\vdots&&\vdots
}\]}
\end{enumerate}
\end{proposition}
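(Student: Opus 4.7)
The plan is to deduce parts (a), (b), (d), and the principal claims of (e) directly from Theorems \ref{general tilting for qgr} and \ref{negative case} applied to this ring, and to treat the DG-algebra identification in (e) together with the Auslander-Reiten quivers in (c) and (f) by direct arguments.

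For (a), observe that $y^2=0$ forces every homogeneous non-zero-divisor of $R$ to be a scalar times a power of $x$, so $K=R[x^{-1}]=k[x^{\pm1},y]/(y^2)$. A direct check of graded pieces gives $R_j=K_j$ for all $j\ge0$, while $K_{1-n}=k\cdot x^{-1}y\ne 0=R_{1-n}$, so by Lemma \ref{basic properties of K}(a) the $a$-invariant is $1-n$. The homogeneous units of $K$ are $k^\times\cdot x^{\Z}$, hence of degrees in $n\Z$, so multiplication by $x^{-1}$ realises $K(n)\simeq K$ and $n$ is minimal with this property. For (b), Theorem \ref{general tilting for qgr} applied with $p=n$ supplies $U$ and presents $\Lambda=\End^\Z_R(U)$ as in \eqref{lambda}; in the index range $\{1-n,\dots,n-1\}$ only $K_0$, $K_1=k\cdot y$, and $K_{1-n}=k\cdot x^{-1}y$ are nonzero, producing the diagonal copies of $k$, the subdiagonal arrows $y$, and the top-right-corner arrow $x^{-1}y$ of the cyclic quiver, with $z^2=0$ forced by $y^2=0$. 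Part (d) is then Theorem \ref{negative case}(b) with $-a=n-1$, so $P=\bigoplus_{i=1}^{n-1}P^i$. The silting statement in (e) is Theorem \ref{negative case}(a) with $a+p=1$; the tilting dichotomy follows from Theorem \ref{negative case}(c) (where $R$ is non-regular, having the nilpotent $y$) when $n\ge 2$, and from Theorem \ref{general tilting} in the case $n=1$ with $a=0$.

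For the identification $\underline{\CM}^\Z_0 R\simeq\per(k[w]/(w^2))$ with $\deg w=1-n$, I would appeal to Keller's theorem: a silting object generating a triangulated category with DG enhancement induces a triangle equivalence with $\per$ of its DG endomorphism algebra. One then computes $\RHom_{\underline{\CM}^\Z_0 R}(R(1)_{\ge0},R(1)_{\ge0})$: the degree-$0$ part is $\underline{\End}^\Z_R(R(1)_{\ge0})=k$, and the periodicity $K(n)\simeq K$ produces, via the Frobenius enhancement or equivalently through the model $\KKK^{\bo}(\proj\Lambda)/\thick P$ from (d), a periodic minimal resolution yielding a one-dimensional class in each cohomological degree that is a non-negative multiple of $1-n$, and zero otherwise. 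The square of the generator in degree $1-n$ lies in degree $2(1-n)$ and vanishes by a direct computation on the resolution (equivalently, its lift to $\KKK^{\bo}(\proj\Lambda)$ already lies in $\thick P$).

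For (c), $\Lambda=kQ/(z^2)$ is a cyclic Nakayama algebra of Loewy length $2$, so the indecomposable objects of $\KKK^{\bo}(\proj\Lambda)$ are the string complexes $X^i_{a,b}$ obtained by splicing $P^i,P^{i+1},\dots,P^{i+b}$ along the unique nonzero maps; the almost-split triangles are classical, giving $n$ connected components of shape $\Z A_\infty$ indexed by $i\in\Z/n\Z$. For (f), the fractional ideals $R^i\subset K$ are verified to lie in $\CM_0^\Z R$ (one has $K\otimes_R R^i=K$), and their images in $\KKK^{\bo}(\proj\Lambda)/\thick P$ match the appropriate truncations of the string complexes of (c), transporting the AR triangles onto the displayed mesh. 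The main technical obstacle is the DG-algebra identification in (e): obtaining the multiplicative structure on $\RHom$, and not merely its graded vector-space structure, requires careful tracking of compositions, either through a Frobenius resolution or through the quotient $\KKK^{\bo}(\proj\Lambda)/\thick P$; a secondary bookkeeping task is the explicit matching of $X^i_{a,b}$ with $R^{i,j}$ in the two AR quiver pictures.
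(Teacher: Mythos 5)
Your treatment of parts (a), (b), (d), and the first sentence of (e) is correct and in substance matches the paper: the direct computation that $K=R[x^{-1}]=k[x^{\pm1},y]/(y^2)$, the identification of $a=1-n$ and $p=n$, reading $\Lambda$ off from the matrix form \eqref{lambda}, and citing Theorems \ref{general tilting for qgr}, \ref{negative case} and \ref{general tilting} for the silting/tilting statements. The paper dismisses (a) as ``clear'' and (c), (f) as ``well-known''; your sketch of (c) via the string combinatorics of the self-injective Nakayama algebra $kQ/(z^2)$ and your remark that (f) is transported along (d) are in the right spirit, though left as bookkeeping.

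The genuine gap is in the DG-algebra identification in (e). First, your claimed cohomology is wrong: you assert that $\RHom_{\underline{\CM}^{\Z}_0R}(R(1)_{\ge0},R(1)_{\ge0})$ has a one-dimensional class in \emph{every} non-negative multiple of $1-n$, i.e.\ in degrees $0,\,1-n,\,2(1-n),\ldots$. But the target answer $k[w]/(w^2)$ with zero differential has cohomology $k$ only in degrees $0$ and $1-n$, so if your computation were right the equivalence you are trying to prove would be false; you would be computing the endomorphism ring of the silting object incorrectly, or conflating it with the DG algebra $k[w]$. Second, and more seriously, even if the graded pieces of the cohomology were correct, you acknowledge yourself that you cannot nail down the $A_\infty$/multiplicative structure — and Keller's theorem identifies $\thick T$ with $\per$ of the actual DG endomorphism algebra, not of its cohomology. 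A quasi-isomorphism to $k[w]/(w^2)$ must be exhibited, not inferred from a graded-vector-space match.

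The paper's Proposition \ref{k[w]/(w^2)} closes this gap by a different and more effective route, entirely inside $\KKK^{\bo}(\proj\Lambda)$. It introduces the explicit complex $M=(P^{1}\xrightarrow{z}\cdots\xrightarrow{z}P^{n})$ supported in cohomological degrees $1-n,\ldots,0$, checks that $\Hom(P[i],M)=0$ for all $i$ and that $P^{n}\in(\thick P)*(\thick M)$, so that $\KKK^{\bo}(\proj\Lambda)=(\thick P)\perp(\thick M)$ and hence $\KKK^{\bo}(\proj\Lambda)/\thick P\simeq\thick M$. Applying Keller's theorem to $M$ then requires computing $\mathcal{E}\mathrm{nd}_\Lambda(M)$ on the nose, and here the complex is so short that the DG endomorphism algebra is concentrated in degrees $1-n$, $0$, $1$ only; the map $k[w]/(w^2)\to\mathcal{E}\mathrm{nd}_\Lambda(M)$ sending $w$ to $(z:P^{n}\to P^{1})$ is then visibly a quasi-isomorphism (no formality argument needed, since the DG algebra is explicit and small). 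This gives a multiplicative identification for free, which is precisely what is missing from your sketch. You should replace the heuristic computation of $\RHom$ over $R$ by this explicit model in $\KKK^{\bo}(\proj\Lambda)$, or at minimum correct the claimed cohomology to degrees $\{0,1-n\}$ and supply a formality argument, e.g.\ by observing that for $n\ge2$ all higher $A_\infty$-products land in degrees strictly between $1-n$ and $0$ where the cohomology vanishes.
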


We will give a proof of Proposition \ref{no tilting} in Section \ref{subsection proof of no tilting}.
Note that the Auslander-Reiten quivers of $\per(\qgr R)$ and $\underline{\CM}^{\Z}_0R$ are isomorphic, but they are not triangle equivalent.

\section{Realizing Verdier quotients as thick subcategories}

Throughout this subsection, we assume that $A$ is a \emph{$\Z$-graded Iwanaga-Gorenstein ring}, that is,
\begin{itemize}
\item $A$ is a noetherian ring on each side with $\injdim A_A<\infty$ and $\injdim_AA<\infty$.
\end{itemize}
We denote by $\mod^{\Z}A$ the category of $\Z$-graded finitely generated (right) $A$-modules,
by $\proj^{\Z}A$ the category of $\Z$-graded finitely generated projective $A$-modules, and by $\mod_0^{\Z}A$ the category of $\Z$-graded $A$-modules of finite length.

Under certain conditions, it is known \cite{O,IY2} that two Verdier quotients $\DDD^{\bo}(\mod^{\Z}A)/\KKK^{\bo}(\proj^{\Z}A)$ and $\DDD^{\bo}(\mod^{\Z}A)/\DDD^{\bo}(\mod^{\Z}_0A)$ can be realized as thick subcategories of $\DDD^{\bo}(\mod^{\Z}A)$.
The aim of this section is to give an analogous result for the thick subcategory
\[\DD_A:=\thick\{\proj^{\Z}A,\mod^{\Z}_0A\}\subseteq\DDD^{\bo}(\mod^{\Z}A),\]
and its Verdier quotients
$\DD_A/\KKK^{\bo}(\proj^{\Z}A)$ and $\DD_A/\DDD^{\bo}(\mod^{\Z}_0A)$.

For a subset $I$ of $\Z$, let $\mod^IA$ be the full subcategory of $\mod^{\Z}A$ consisting of all $X$ satisfying $X_i=0$ for all $i\in\Z\setminus I$.
For an integer $\ell\in\Z$, let $\mod^{\ge\ell}A:=\mod^{[\ell,\infty)}A$,
$\mod^{\le\ell}A:=\mod^{(-\infty,\ell]}A$ and so on.
Then $\DDD^{\bo}(\mod^{\ge\ell}A)$ can be regarded as a thick subcategory of $\DDD^{\bo}(\mod^{\Z}A)$. Let
\[\DD_A^{\ge\ell}=\DD_A^{>\ell-1}:=\DD_A\cap\DDD^{\bo}(\mod^{\ge\ell}A).\]
Let $\mod^{\ge\ell}_0A:=\mod^{\ge\ell}A\cap\mod^{\Z}_0A$, $\mod^{\le\ell}_0A:=\mod^{\le\ell}A\cap\mod^{\Z}_0A=\mod^{\le\ell}A$ and so on.
Similarly, let $\proj^IA$ be the full subcategory of $\proj^{\Z}A$ consisting of all $P$ which are generated by homogeneous elements of degrees in $I$.
Let $\proj^{\ge\ell}A:=\proj^{[\ell,\infty)}A$, $\proj^{\le\ell}A:=\proj^{(-\infty,\ell]}A$ and so on.

Since $A$ is Iwanaga-Gorenstein, we have a duality \cite[Corollary 2.11]{Miy}
\[(-)^*:=\RHom_A(-,A):\DDD^{\bo}(\mod^{\Z}A)\simeq\DDD^{\bo}(\mod^{\Z}A^{\op}).\]
We consider the following three conditions.
\begin{itemize}
\item[(A1)] $A=\bigoplus_{i\ge0}A_i$ and $\gl A_0<\infty$.
\item[(A2)] $A_0$ is an artinian ring.
\item[(A3)] There exists $a\in\Z$ such that $(-)^*$ restricts to a duality $(-)^*:\DDD^{\bo}(\mod^0A)\simeq\DDD^{\bo}(\mod^aA^{\op})$.
\end{itemize}
For example, our $R$ satisfying (R1) and (R2) satisfies these conditions for the $a$-invariant $a$ of $R$.

Let $\XX$ and $\YY$ be full subcategories in a triangulated category $\TT$.
We denote by $\XX*\YY$ the full subcategory of $\TT$ whose objects consisting of $Z\in\TT$ such that there is a triangle $X\to Z\to Y\to X[1]$ with $X\in\XX$ and $Y\in\YY$.
When $\Hom_{\TT}(\XX,\YY)=0$ holds, we write $\XX*\YY=\XX\perp\YY$. For full subcategories $\XX_1,\ldots,\XX_n$, we define $\XX_1*\cdots*\XX_n$ and $\XX_1\perp\cdots\perp\XX_n$ inductively.

We are ready to state the following main result in this section.

\begin{theorem}\label{embedding2}
Let $A$ be a $\Z$-graded Iwanaga-Gorenstein ring satisfying $A=A_{\ge0}$, and $\ell$ an integer.
\begin{enumerate}[\rm(a)]
\item If the condition (A1) is satisfied, then we have a semi-orthogonal decomposition
\begin{eqnarray*}
\DD_A=\KKK^{\bo}(\proj^{<\ell}A)\perp(\DD_A^{\ge\ell}\cap(\DD_{A^{\op}}^{>-\ell})^*)\perp\KKK^{\bo}(\proj^{\ge\ell}A).
\end{eqnarray*}
The natural functor $\DD_A\to\DD_A/\KKK^{\bo}(\proj^{\Z}A)$ restricts to a triangle equivalence
\begin{eqnarray*}
\DD^{\ge\ell}_A\cap(\DD^{>-\ell}_{A^{\op}})^*\simeq\DD_A/\KKK^{\bo}(\proj^{\Z}A).
\end{eqnarray*}
\item If the conditions (A2) and (A3) are satisfied, then we have a semi-orthogonal decomposition
\begin{eqnarray*}
\DD_A=\DDD^{\bo}(\mod_0^{\ge\ell}A)\perp(\DD_A^{\ge\ell}\cap(\DD_{A^{\op}}^{>a-\ell})^*)\perp\DDD^{\bo}(\mod_0^{<\ell}A).
\end{eqnarray*}
The natural functor $\DD_A\to\DD_A/\DDD^{\bo}(\mod^{\Z}_0A)$ restricts to a triangle equivalence
\begin{eqnarray*}
\DD^{\ge\ell}_A\cap(\DD^{>a-\ell}_{A^{\op}})^*\simeq\DD_A/\DDD^{\bo}(\mod^{\Z}_0A).
\end{eqnarray*}
\item Assume that the conditions (A1), (A2) and (A3) are satisfied. If $a\ge0$, then we have a semi-orthogonal decomposition
\[\DD_A^{\ge\ell-a}\cap(\DD_{A^{\op}}^{>a-\ell})^*=(\DD_A^{\ge\ell}\cap(\DD_{A^{\op}}^{>a-\ell})^*)\perp\DDD^{\bo}(\mod^{[\ell-a,\ell-1]}A).\]
If $a\le0$, then we have a semi-orthogonal decomposition
\[\DD_A^{\ge\ell}\cap(\DD_{A^{\op}}^{>a-\ell})^*=\KKK^{\bo}(\proj^{[\ell,\ell-a-1]}A)\perp(\DD_A^{\ge\ell-a}\cap(\DD_{A^{\op}}^{>a-\ell})^*).\]
\end{enumerate}
\end{theorem}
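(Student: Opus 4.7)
The plan is to establish, in each of (a) and (b), pairwise orthogonality of the three claimed subcategories together with generation of $\DD_A$ by their iterated extension-closure; the Verdier quotient equivalences then follow formally, since the left and right summands jointly generate the kernel of the quotient functor. Part (c) is a direct comparison of the two middle pieces. We focus on (a), then treat (b) dually and (c) last.

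For orthogonality in (a), the identities $\Hom^{\Z}_A(A(-i),X[n])=H^n(X)_i$ and $\Hom^{\Z}_A(A(-i),A(-j))=A_{i-j}$ kill the two pairs involving $\KKK^{\bo}(\proj^{<\ell}A)$, using $A=A_{\ge0}$. For the pair $(\text{middle},\KKK^{\bo}(\proj^{\ge\ell}A))$, the duality $(-)^*=\RHom_A(-,A)$ sends $A(-j)$ to $A(j)$, which as a graded $A^{\op}$-module is generated in degree $-j$. Hence $Q\in\proj^{\ge\ell}A$ gives $Q^*\in\KKK^{\bo}(\proj^{\le-\ell}A^{\op})$, and the pairing $\Hom(X,Q)\cong\Hom_{A^{\op}}(Q^*,X^*)$ vanishes for $X$ in the middle piece, since $X^*\in\DD_{A^{\op}}^{>-\ell}$ has no support in the degrees $\le-\ell$ where $Q^*$ lives.

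Generation reduces by thickness to decomposing the generators $\proj^{\Z}A\cup\mod^{\Z}_0A$ of $\DD_A$. A projective splits directly as $P=P^{<\ell}\oplus P^{\ge\ell}$. For $M\in\mod_0^{\Z}A$, fix a graded projective resolution $P_\bullet\to M$; the key structural point is that $A=A_{\ge0}$ forces any nonzero degree-zero map $A(-a)\to A(-b)$ to satisfy $a\ge b$, so the subsum $P^{<\ell}_\bullet\subseteq P_\bullet$ spanned by generators in degrees $<\ell$ is a subcomplex, with quotient $P^{\ge\ell}_\bullet$, giving the triangle $P^{<\ell}_\bullet\to M\to P^{\ge\ell}_\bullet$. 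The delicate step is to replace these a priori unbounded complexes by bounded approximations. Hypothesis (A1), via $\gl A_0<\infty$, allows truncating the resolution at a cohomological degree past which the syzygies of $M$ have been resolved graded-piece-by-graded-piece, producing bounded complexes in $\KKK^{\bo}(\proj^{<\ell}A)$ and $\KKK^{\bo}(\proj^{\ge\ell}A)$; the residual complex between them is shown to lie in $\DD_A^{\ge\ell}\cap(\DD_{A^{\op}}^{>-\ell})^*$ by applying $(-)^*$ and tracking the degree ranges via the Iwanaga-Gorenstein duality.

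Part (b) is proved by the dual argument: $\DDD^{\bo}(\mod_0^{\ge\ell}A)$ and $\DDD^{\bo}(\mod_0^{<\ell}A)$ replace the two projective subcategories; (A2) keeps the graded lengths of finite-length modules manageable so that the truncation $0\to M_{\ge\ell}\to M\to M_{<\ell}\to0$ peels off controlled pieces, and (A3) supplies the shift by $a$ that aligns the duality on the middle piece. For (c) with $a\ge0$, the difference between the two middle pieces consists of objects supported in the finite degree interval $[\ell-a,\ell-1]$, automatically of finite length by (A2), yielding the correction factor $\DDD^{\bo}(\mod^{[\ell-a,\ell-1]}A)$; the case $a\le0$ is symmetric, with $\KKK^{\bo}(\proj^{[\ell,\ell-a-1]}A)$ playing the same role. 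Orthogonality of the correction factors is by the same Hom computations as before. The main obstacle throughout is the bounded-approximation step in the generation argument for (a): one must coordinate (A1) with the duality $(-)^*$ so that the residual complex satisfies both the graded-support condition and its dual analogue simultaneously. Once this is in place, all remaining claims—the Verdier quotient equivalences in (a) and (b), and the decompositions in (c)—follow formally from the semi-orthogonal structure.
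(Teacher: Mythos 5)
Your approach is genuinely different from the paper's, and the difference matters. You attempt a direct verification of semi-orthogonality: pairwise Hom-vanishing plus generation, where generation is handled by splitting a graded projective resolution of $M\in\mod_0^{\Z}A$ into $P^{<\ell}_\bullet$ and $P^{\ge\ell}_\bullet$ according to the degrees of generators. The paper instead works at the level of subcategories: it starts from the two known semi-orthogonal decompositions of the ambient category $\DDD^{\bo}(\mod^{\Z}A)$ (Propositions~\ref{SOD of K} and~\ref{SOD of S}), intersects them with the thick subcategory $\DD_A$ via the modular Lemma~\ref{modular}, dualizes with $(-)^*$ to obtain a second decomposition of $\DD_A$, and then uses Lemma~\ref{modular}(c) to interleave the two. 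That route never needs to decompose any single object explicitly, and entirely sidesteps the step that your argument does not actually carry out.

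The gap in your proposal is the generation argument. The complexes $P^{<\ell}_\bullet$ and $P^{\ge\ell}_\bullet$ are a priori unbounded (when $A$ has infinite global dimension, the minimal resolution of $M$ does not terminate), so the triangle $P^{<\ell}_\bullet\to M\to P^{\ge\ell}_\bullet$ does not live in the three claimed pieces, and you acknowledge this. Your fix --- truncate at a large cohomological degree and argue that the residual complex lies in $\DD_A^{\ge\ell}\cap(\DD_{A^{\op}}^{>-\ell})^*$ ``by applying $(-)^*$ and tracking degree ranges'' --- is asserted, not proved; controlling the $(-)^*$-side membership requires the full Iwanaga-Gorenstein duality and is precisely the content that the paper handles by the second decomposition $\DD_A=(\DD_{A^{\op}}^{>-\ell})^*\perp\KKK^{\bo}(\proj^{\ge\ell}A)$, not by inspection of a truncated resolution. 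It is also unclear that (A1) alone suffices for your proposed truncation: $\gl A_0<\infty$ controls $\DDD^{\bo}(\mod^{\ge\ell}A)=\thick(\proj^{\ge\ell}A)$ inside $\DDD^{\bo}(\mod^{\Z}A)$ (which is what Proposition~\ref{SOD of K} records), but your hands-on argument does not invoke that statement. For (b) you would additionally need to explain how to decompose the generators $A(-i)$ themselves relative to the finite-length pieces, which the proposal does not address. For (c) your intuition about the correction factor is correct, but in the paper it is again a one-line consequence of Lemma~\ref{modular}(b) applied to $\DD_A^{\ge\ell-a}=\DD_A^{\ge\ell}\perp\DDD^{\bo}(\mod^{[\ell-a,\ell-1]}A)$ together with the identification $\DDD^{\bo}(\mod^{[\ell-a,\ell-1]}A)=\DDD^{\bo}(\mod^{[a+1-\ell,2a-\ell]}A^{\op})^*$. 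In short: your orthogonality checks are sound, but the generation step is the real content, and the route you sketch does not close it; the paper's intersection-and-glue strategy is the tool that makes this tractable.
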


Immediately, we obtain the following analogue of Orlov's semi-orthogonal decompositions \cite{O}.

\begin{corollary}\label{SOD}
Assume that (A1), (A2) and (A3) are satisfied. For $\ell\in\Z$, there exist fully faithful triangle functors $F_\ell:\DD_A/\KKK^{\bo}(\proj^{\Z}A)\to\DD_A$ and $G_\ell:\DD_A/\DDD^{\bo}(\mod^{\Z}_0A)\to\DD_A$ and a semi-orthogonal decomposition
\[\begin{array}{ll}
G_\ell(\DD_A/\DDD^{\bo}(\mod^{\Z}_0A))\simeq\KKK^{\bo}(\proj^{[\ell,\ell-a-1]}A)\perp F_\ell(\DD_A/\KKK^{\bo}(\proj^{\Z}A))&\mbox{if $a<0$,}\\
F_\ell(\DD_A/\KKK^{\bo}(\proj^{\Z}A))\simeq G_\ell(\DD_A/\DDD^{\bo}(\mod^{\Z}_0A))&\mbox{if $a=0$,}\\
F_\ell(\DD_A/\KKK^{\bo}(\proj^{\Z}A))\simeq G_\ell(\DD_A/\DDD^{\bo}(\mod^{\Z}_0A))\perp\DDD^{\bo}(\mod^{[\ell-a,\ell-1]}A)&\mbox{if $a>0$.}
\end{array}\]
\end{corollary}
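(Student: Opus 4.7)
The plan is to obtain Corollary \ref{SOD} as a direct formal consequence of Theorem \ref{embedding2}: the three semi-orthogonal decompositions claimed in the corollary are precisely the three cases produced by part (c) of that theorem, once the cutoffs in parts (a) and (b) are aligned so that the corresponding admissible subcategories coincide with those compared in (c).

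Concretely, I will first invoke Theorem \ref{embedding2}(a) with the cutoff $\ell-a$ in place of $\ell$. This produces a triangle equivalence
\[
\DD^{\ge\ell-a}_A\cap(\DD^{>a-\ell}_{A^{\op}})^*\;\simeq\;\DD_A/\KKK^{\bo}(\proj^{\Z}A),
\]
and I define $F_\ell$ to be a quasi-inverse of the associated Verdier quotient functor, so that $F_\ell$ is fully faithful with essential image $\DD^{\ge\ell-a}_A\cap(\DD^{>a-\ell}_{A^{\op}})^*$. Next, I apply Theorem \ref{embedding2}(b) with cutoff $\ell$, obtaining a triangle equivalence
\[
\DD^{\ge\ell}_A\cap(\DD^{>a-\ell}_{A^{\op}})^*\;\simeq\;\DD_A/\DDD^{\bo}(\mod^{\Z}_0A),
\]
and I define $G_\ell$ to be a quasi-inverse, again fully faithful, with essential image $\DD^{\ge\ell}_A\cap(\DD^{>a-\ell}_{A^{\op}})^*$. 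Note that this step uses all three assumptions (A1)--(A3), the first pair through (a) and the second pair through (b).

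The three cases of the corollary are then immediate. When $a>0$, Theorem \ref{embedding2}(c) reads $F_\ell(\DD_A/\KKK^{\bo}(\proj^{\Z}A))\simeq G_\ell(\DD_A/\DDD^{\bo}(\mod^{\Z}_0A))\perp\DDD^{\bo}(\mod^{[\ell-a,\ell-1]}A)$. When $a<0$, it reads $G_\ell(\DD_A/\DDD^{\bo}(\mod^{\Z}_0A))\simeq\KKK^{\bo}(\proj^{[\ell,\ell-a-1]}A)\perp F_\ell(\DD_A/\KKK^{\bo}(\proj^{\Z}A))$. When $a=0$, the two essential images both equal $\DD^{\ge\ell}_A\cap(\DD^{>-\ell}_{A^{\op}})^*$, so $F_\ell\simeq G_\ell$ tautologically. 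The only real obstacle is the bookkeeping that turns the symbol ``$-\ell$'' appearing in Theorem \ref{embedding2}(a) into the symbol ``$a-\ell$'' appearing in (b) and (c); the substitution $\ell\mapsto\ell-a$ in part (a) is what achieves this alignment, and after it no further argument is required.
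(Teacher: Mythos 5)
Your proof is correct and is exactly the argument the paper leaves implicit when it says that Corollary \ref{SOD} follows immediately from Theorem \ref{embedding2}: shifting the cutoff in part (a) to $\ell-a$ aligns the two images $\DD^{\ge\ell-a}_A\cap(\DD^{>a-\ell}_{A^{\op}})^*$ and $\DD^{\ge\ell}_A\cap(\DD^{>a-\ell}_{A^{\op}})^*$ so that part (c) gives precisely the stated semi-orthogonal decomposition in each of the three sign cases. The bookkeeping and the definitions of $F_\ell$, $G_\ell$ as quasi-inverses of the restricted quotient functors are the intended reading, so there is nothing to add.
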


We refer to \cite{MY} for analogous results to Theorem \ref{embedding2}.

The rest of this section is devoted to prove Theorem \ref{embedding2} and Corollary \ref{SOD}. 
We start with the following easy observation.

\begin{lemma}\label{modular}
Let $\TT$ be a triangulated category and $\TT=\XX\perp\YY$ a semi-orthogonal decomposition.
\begin{enumerate}[\rm(a)]
\item If $\ZZ$ is a thick subcategory of $\TT$ such that $\XX\subseteq\ZZ$, then $\ZZ=\XX\perp(\YY\cap\ZZ)$.
\item If $\ZZ$ is a thick subcategory of $\TT$ such that $\YY\subseteq\ZZ$, then $\ZZ=(\XX\cap\ZZ)\perp\YY$.
\item If $\TT=\XX'\perp\YY'$ is a semi-orthogonal decomposition such that $\XX\subseteq\XX'$ (or equivalently, $\YY\supseteq\YY'$), then we have a semi-orthogonal decomposition $\TT=\XX\perp(\YY\cap\XX')\perp\YY'$.
\end{enumerate}
\end{lemma}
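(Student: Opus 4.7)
The plan is to prove (a) directly from the definition of a semi-orthogonal decomposition, obtain (b) by the mirror argument, and then deduce (c) by applying (b) inside one of the two given decompositions.

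For part (a), I would take any $Z \in \ZZ$ and use the ambient semi-orthogonal decomposition $\TT = \XX \perp \YY$ to produce a triangle $X \to Z \to Y \to X[1]$ with $X \in \XX$ and $Y \in \YY$. Since $\XX \subseteq \ZZ$ by hypothesis, $X \in \ZZ$; because $\ZZ$ is thick, hence closed under cones, we get $Y \in \ZZ$, so $Y \in \YY \cap \ZZ$. The Hom-vanishing $\Hom_\TT(\XX, \YY \cap \ZZ) = 0$ is inherited from $\Hom_\TT(\XX, \YY) = 0$, giving $\ZZ = \XX \perp (\YY \cap \ZZ)$. Part (b) is the mirror statement and is proved by the analogous argument: run the same triangle and note that now $Y \in \YY \subseteq \ZZ$, so thickness of $\ZZ$ (closure under cocones) forces $X \in \XX \cap \ZZ$.

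For part (c), I would first observe the equivalence of the two inclusion hypotheses: in any semi-orthogonal decomposition $\TT = \XX \perp \YY$, the factor $\YY$ equals the right orthogonal $\XX^{\perp} = \{T \in \TT \mid \Hom_\TT(\XX,T) = 0\}$, so $\XX \subseteq \XX'$ reverses to $\YY = \XX^{\perp} \supseteq (\XX')^{\perp} = \YY'$. Then apply part (b) to the decomposition $\TT = \XX' \perp \YY'$ with the thick subcategory $\YY$, which contains $\YY'$; this yields $\YY = (\XX' \cap \YY) \perp \YY'$. Plugging this refinement of $\YY$ into the original decomposition $\TT = \XX \perp \YY$ produces $\TT = \XX \perp (\XX' \cap \YY) \perp \YY'$. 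The three orthogonality conditions required are all immediate: $\Hom_\TT(\XX, \XX' \cap \YY) = 0$ and $\Hom_\TT(\XX, \YY') = 0$ follow from the inclusions $\XX' \cap \YY \subseteq \YY$ and $\YY' \subseteq \YY$ together with $\Hom_\TT(\XX, \YY) = 0$, while $\Hom_\TT(\XX' \cap \YY, \YY') = 0$ follows from $\XX' \cap \YY \subseteq \XX'$ together with $\Hom_\TT(\XX', \YY') = 0$.

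The argument is essentially formal once the definitions of thick subcategory and semi-orthogonal decomposition are unpacked, so I do not anticipate any real obstacle; the only care needed is to keep straight which side of which decomposition is being fed into which application of (a) or (b), and to verify the "or equivalently" clause in (c) via the orthogonal-complement characterization of the right-hand factor.
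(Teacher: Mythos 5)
Your proof is correct and matches the paper's approach; the paper disposes of (a) and (b) with ``easy'' and then proves (c) by applying (a) to the thick subcategory $\XX'$ (getting $\XX' = \XX \perp (\YY \cap \XX')$ and substituting into $\TT = \XX' \perp \YY'$), while you apply the mirror statement (b) to the thick subcategory $\YY$ and substitute into $\TT = \XX \perp \YY$ --- the two routes are symmetric and yield the same refinement $\TT = \XX \perp (\YY \cap \XX') \perp \YY'$. Your explicit verification of the ``or equivalently'' clause via $\YY = \XX^{\perp}$ and of the three Hom-vanishings is a welcome bit of extra rigor that the paper leaves implicit.
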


\begin{proof}
(a) and (b) are easy. By (a), we have $\XX'=\XX\perp(\YY\cap\XX')$ and hence $\TT=\XX'\perp\YY'=\XX\perp(\YY\cap\XX')\perp\YY'$.
\end{proof}

We need the following elementary observation (e.g.\ \cite[2.3]{O}).

\begin{proposition}\label{SOD of K}
Let $A$ be a $\Z$-graded Iwanaga-Gorenstein ring satisfying (A1). 
Then there exists a semi-orthogonal decomposition
$\KKK(\proj^{\Z}A)=\KKK(\proj^{<\ell}A)\perp\KKK(\proj^{\ge\ell}A)$
which gives
$\KKK^{\bo}(\proj^{\Z}A)=\KKK^{\bo}(\proj^{<\ell}A)\perp\KKK^{\bo}(\proj^{\ge\ell}A)$ and $\DDD^{\bo}(\mod^{\Z}A)=\KKK^{\bo}(\proj^{<\ell}A)\perp\DDD^{\bo}(\mod^{\ge\ell}A)$.
\end{proposition}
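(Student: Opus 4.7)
The plan is to prove the three decompositions in order: the first by a direct Hom-computation, the second by restriction, and the third by a projective-resolution argument combined with the first.

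The core input is the Hom-vanishing
\[\Hom^{\Z}_A(A(-d),A(-e))=A_{d-e}=0\quad\text{whenever } d<\ell\le e,\]
which uses $A=A_{\ge0}$. This immediately yields orthogonality of $\KKK(\proj^{<\ell}A)$ and $\KKK(\proj^{\ge\ell}A)$: any chain map between such complexes vanishes in each degree. It also forces every morphism between arbitrary graded projectives to be block upper-triangular with respect to the canonical splitting $P=P^{<\ell}\oplus P^{\ge\ell}$ by generator degrees. Applied to the differential of a complex $P^\bullet\in\KKK(\proj^{\Z}A)$, the summand $P^{\bullet,<\ell}$ is therefore a subcomplex with quotient $P^{\bullet,\ge\ell}$, producing the desired triangle. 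Since the decomposition preserves boundedness, the restriction to $\KKK^{\bo}$ is immediate.

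For the third decomposition $\DDD^{\bo}(\mod^{\Z}A)=\KKK^{\bo}(\proj^{<\ell}A)\perp\DDD^{\bo}(\mod^{\ge\ell}A)$, orthogonality is again the same Hom-vanishing: given a perfect complex $P^\bullet\in\KKK^{\bo}(\proj^{<\ell}A)$ and a representative of $X^\bullet\in\DDD^{\bo}(\mod^{\ge\ell}A)$ in $\mod^{\ge\ell}A$, each component $A(-d)\to X^i$ with $d<\ell$ vanishes because $X^i_d=0$. For generation, take a graded projective resolution $P^\bullet\to X$ in $\KKK^{-}(\proj^{\Z}A)$ and apply the first decomposition to obtain the triangle $P^{\bullet,<\ell}\to P^\bullet\to P^{\bullet,\ge\ell}$, which under the quasi-isomorphism $P^\bullet\simeq X$ becomes a triangle $P^{\bullet,<\ell}\to X\to P^{\bullet,\ge\ell}$ in $\DDD^{\bo}(\mod^{\Z}A)$.

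The technical heart is to ensure $P^{\bullet,<\ell}$ is actually bounded. By choosing a minimal graded projective resolution of $X$, the minimal generators of the $i$-th syzygy lie in degrees at least $d_{\min}+i$, where $d_{\min}$ is the smallest degree with $H^\bullet(X)_{d_{\min}}\ne0$; hence $P^{-i}\in\proj^{\ge\ell}A$ for all $i\ge\ell-d_{\min}$, forcing $P^{\bullet,<\ell}$ to be concentrated in finitely many cohomological degrees and thus to lie in $\KKK^{\bo}(\proj^{<\ell}A)$. The quotient $P^{\bullet,\ge\ell}$ may a priori be unbounded below, but the long exact cohomology sequence together with the boundedness of $H^\bullet(X)$ and $H^\bullet(P^{\bullet,<\ell})$ forces $H^\bullet(P^{\bullet,\ge\ell})$ to be bounded; since its terms already lie in $\mod^{\ge\ell}A$, this places $P^{\bullet,\ge\ell}$ in $\DDD^{\bo}(\mod^{\ge\ell}A)$. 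I expect the main obstacle to be precisely this degree-control in the minimal resolution, which relies on the standard fact that syzygies of graded modules over $A=A_{\ge0}$ move strictly up in generator-degree at each step of a minimal resolution; once this is secured, the rest of the argument is a formal unwinding of triangles.
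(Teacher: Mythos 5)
Your proofs of the first two decompositions are correct: the Hom-vanishing $\Hom^{\Z}_A(A(-d),A(-e))=A_{d-e}=0$ for $d<\ell\le e$ (using $A=A_{\ge0}$) gives both the orthogonality and the fact that $P^{\bullet,<\ell}$ is a subcomplex of any complex in $\KKK(\proj^{\Z}A)$, so the termwise split short exact sequence $0\to P^{\bullet,<\ell}\to P^\bullet\to P^{\bullet,\ge\ell}\to 0$ yields the desired triangle, and boundedness is clearly preserved.

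The gap is exactly where you flagged it, and it is more serious than you suggest. Your claim that in a minimal graded projective resolution of a module $X=X_{\ge d_{\min}}$ the generators of the $i$-th syzygy live in degrees $\ge d_{\min}+i$ is only valid when the graded Jacobson radical of $A$ equals $A_{>0}$, i.e.\ when $A_0$ is semisimple. Condition (A1) merely asserts $\gl A_0<\infty$, which is strictly weaker. A concrete counterexample: take $A_0=kQ$ for $Q\colon 1\to 2$ (so $\gl A_0=1$, but $A_0$ not semisimple) and $A=A_0[t]$ with $\deg t=1$. The simple module $S_1$ concentrated in degree $0$ has a minimal $A$-free resolution whose terms $P^0$ and $P^{-1}$ both have generators in degree $0$ (coming from the internal $A_0$-resolution $0\to P_2\to P_1\to S_1\to0$), so the generator degrees do not move up at each step. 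The conclusion that $P^{\bullet,<\ell}$ is bounded does still hold here, but not for your stated reason. The tell is that your argument never uses the hypothesis $\gl A_0<\infty$, yet the third decomposition genuinely needs it: if $A_0$ had infinite global dimension, a module concentrated in a single degree would typically not have a bounded approximation by perfect complexes from $\proj^{<\ell}A$.

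The hypothesis should enter as follows. Given a f.g.\ module $X=X_{\ge m}$ with $m<\ell$, use the triangle $X_{\ge m+1}\to X\to X_m$, where $X_m$ is an $A_0$-module concentrated in degree $m$. Since $\gl A_0<\infty$, choose a finite projective $A_0$-resolution $Q^\bullet\to X_m$; then $P^\bullet:=Q^\bullet\otimes_{A_0}A(-m)$ is a bounded complex of projectives generated in degree $m<\ell$ (because $A$ is free, hence flat, as a left $A_0$-module), and the induced map $P^\bullet\to X_m$ has cone quasi-isomorphic to $(X_m\otimes_{A_0}A(-m))_{\ge m+1}[1]$, a shifted f.g.\ module concentrated in degrees $\ge m+1$. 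Downward induction on $\ell-m$ then shows that every bounded complex lies in $\thick\bigl(\KKK^{\bo}(\proj^{<\ell}A)\cup\DDD^{\bo}(\mod^{\ge\ell}A)\bigr)$, and combined with the Hom-vanishing $\Hom\bigl(\KKK^{\bo}(\proj^{<\ell}A),\DDD^{\bo}(\mod^{\ge\ell}A)\bigr)=0$ (which you proved correctly), this gives the semi-orthogonal decomposition. So the overall shape of your argument is fine, but the ``technical heart'' needs to be replaced by the $\gl A_0<\infty$ argument rather than the degree-increase claim.
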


Now we prove Theorem \ref{embedding2}(a).

\begin{proof}[Proof of Theorem \ref{embedding2}(a)]
We have $\DDD^{\bo}(\mod^{\Z}A)=\KKK^{\bo}(\proj^{<\ell}A)\perp\DDD^{\bo}(\mod^{\ge\ell}A)$ by Proposition \ref{SOD of K}.
Applying Lemma \ref{modular}(a) to $\XX:=\KKK^{\bo}(\proj^{<\ell}A)\subseteq\ZZ:=\DD_A$, we have
\begin{equation}\label{first SOD}
\DD_A=\KKK^{\bo}(\proj^{<\ell}A)\perp(\DD_A\cap\DDD^{\bo}(\mod^{\ge\ell}A))=\KKK^{\bo}(\proj^{<\ell}A)\perp\DD_A^{\ge\ell}.
\end{equation}
Replacing $\ell$ by $1-\ell$, we have $\DD_{A^{\op}}=\KKK^{\bo}(\proj^{\le-\ell}A^{\op})\perp\DD_{A^{\op}}^{>-\ell}$. Applying $(-)^*$, we have
\begin{equation}\label{second SOD}
\DD_A=(\DD_{A^{\op}}^{>-\ell})^*\perp\KKK^{\bo}(\proj^{\le-\ell}A^{\op})^*=
(\DD_{A^{\op}}^{>-\ell})^*\perp\KKK^{\bo}(\proj^{\ge\ell}A).
\end{equation}
Since $\DD_A^{\ge\ell}\supseteq\KKK^{\bo}(\proj^{\ge\ell}A)$, applying Lemma \ref{modular}(c) to \eqref{first SOD} and \eqref{second SOD} gives
$\DD_A=\KKK^{\bo}(\proj^{<\ell}A)\perp
(\DD_A^{\ge\ell}\cap(\DD_{A^{\op}}^{>-\ell})^*)\perp
\KKK^{\bo}(\proj^{\ge\ell}A)$ as desired. 
The last assertion follows from
\[\DD_A/\KKK^{\bo}(\proj^{\Z}A)\stackrel{{\rm Prop. }\ref{SOD of K}}{=}\DD_A/(\KKK^{\bo}(\proj^{<\ell}A)\perp\KKK^{\bo}(\proj^{\ge\ell}A))
\stackrel{}{\simeq}\DD_A^{\ge\ell}\cap(\DD_{A^{\op}}^{>-\ell})^*.\qedhere\]
\end{proof}

We also need the following elementary observation (e.g.\ \cite[2.3]{O}).

\begin{proposition}\label{SOD of S}
Let $A$ be a $\Z$-graded Iwanaga-Gorenstein ring satisfying $A=A_{\ge0}$ and (A2). Then there exist semi-orthogonal decompositions
$\DDD^{\bo}(\mod_0^{\Z}A)=\DDD^{\bo}(\mod_0^{\ge\ell}A)\perp\DDD^{\bo}(\mod_0^{<\ell}A)$ and $\DDD^{\bo}(\mod^{\Z}A)=\DDD^{\bo}(\mod^{\ge\ell}A)\perp\DDD^{\bo}(\mod_0^{<\ell}A)$.
\end{proposition}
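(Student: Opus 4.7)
Proof plan: The strategy is to build the two semi-orthogonal decompositions from a single decomposition of each object at the abelian level, using that $A=A_{\ge0}$ makes degreewise truncation exact. For any $M\in\mod^{\Z}A$, set $M_{\ge\ell}:=\bigoplus_{i\ge\ell}M_i$ and $M_{<\ell}:=M/M_{\ge\ell}$. Because multiplication by any homogeneous element of $A$ has non-negative degree, $M_{\ge\ell}$ is a graded $A$-submodule of $M$, so $(-)_{\ge\ell}$ and $(-)_{<\ell}$ are exact endofunctors of $\mod^{\Z}A$ landing in $\mod^{\ge\ell}A$ and $\mod^{<\ell}A$ respectively. Applying them termwise to a bounded complex $C$ gives a short exact sequence $0\to C_{\ge\ell}\to C\to C_{<\ell}\to0$ of complexes, hence a triangle $C_{\ge\ell}\to C\to C_{<\ell}\to C_{\ge\ell}[1]$ in $\DDD^{\bo}(\mod^{\Z}A)$.

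Next I would check that the two factors lie in the correct subcategories. The factor $C_{\ge\ell}$ is always in $\DDD^{\bo}(\mod^{\ge\ell}A)$, and when $C\in\DDD^{\bo}(\mod_0^{\Z}A)$ it inherits finite length, placing it in $\DDD^{\bo}(\mod_0^{\ge\ell}A)$. For the other factor I have to verify that $M_{<\ell}$ has finite length for every finitely generated $M$. Here (A2) is used essentially: taking $i=0$ shows that $A_0$ is artinian, whence every $M_i$ is a finitely generated, hence finite-length, $A_0$-module; and since $A=A_{\ge0}$ forces any finitely generated graded $M$ to be bounded below, $M_{<\ell}$ has support in a finite range of degrees and is therefore of finite length over $A_0$, a fortiori over $A$. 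This yields the existence part of both decompositions simultaneously (the first by restricting the construction to $\mod_0^{\Z}A$).

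Orthogonality is the remaining point. For $X\in\mod^{\ge\ell}A$ and $Y\in\mod_0^{<\ell}A$ I must show $\Ext^n_A(X,Y)=0$ for all $n\ge0$. A graded projective cover of $X$ can be chosen to be generated in the same degrees as $X$ and therefore lies in $\proj^{\ge\ell}A$; because $A=A_{\ge0}$, its kernel still sits in $\mod^{\ge\ell}A$, and iterating (using that $A$ is noetherian) produces a projective resolution of $X$ by objects of $\proj^{\ge\ell}A$. For any $P=\bigoplus_k A(-a_k)\in\proj^{\ge\ell}A$ (so $a_k\ge\ell$) and any $Y$ supported in degrees $<\ell$, one has $\Hom_A(P,Y)=\prod_k Y_{a_k}=0$. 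Hence all Ext groups vanish; the case $n=0$ is already clear because a graded homomorphism preserves degree. Standard dévissage along the triangulated structure of the bounded derived category then promotes this Ext-vanishing to $\Hom_{\DDD^{\bo}(\mod^{\Z}A)}(\XX,\YY)=0$ for the full subcategories $\XX,\YY$ involved.

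The main obstacle, as I see it, is the verification that $M_{<\ell}$ is of finite length, which is where (A2) must be unpacked carefully: one must first extract that $A_0$ is artinian, then combine this with the bounded-below property of finitely generated graded modules over $A=A_{\ge0}$. Once this is in place, both decompositions follow from the same short exact sequence and the same Ext-vanishing computation, so there is essentially only one calculation to perform.
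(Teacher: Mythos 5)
The paper cites this as elementary (following Orlov) without writing out a proof, and your argument is precisely the standard one that fills the gap: the termwise truncation triangle $C_{\ge\ell}\to C\to C_{<\ell}\to C_{\ge\ell}[1]$, the use of (A2) together with $A=A_{\ge 0}$ to show $M_{<\ell}$ has finite length, and the computation $\Hom^{\Z}_A(\bigoplus_k A(-a_k),Y)=\prod_k Y_{a_k}=0$ for $a_k\ge\ell$ and $Y$ supported in degrees $<\ell$, propagated through a resolution in $\proj^{\ge\ell}A$ and d\'evissage, are all correct. The only cosmetic point is that you do not need a graded \emph{projective cover}—any surjection from a finitely generated graded free module with homogeneous generators in degrees $\ge\ell$ works and always exists, whereas covers need extra hypotheses on $A_0$.
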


Now we prove Theorem \ref{embedding2}(b) and (c).

\begin{proof}[Proof of Theorem \ref{embedding2}(b)(c)]
We have $\DDD^{\bo}(\mod^{\Z}A)=\DDD^{\bo}(\mod^{\ge\ell}A)\perp\DDD^{\bo}(\mod_0^{<\ell}A)$ by Proposition \ref{SOD of S}.
Applying Lemma \ref{modular}(b) to $\YY:=\DDD^{\bo}(\mod_0^{<\ell}A)\subseteq\ZZ:=\DD_A$, we have 
\begin{equation}\label{first SOD2}
\DD_A=(\DD_A\cap\DDD^{\bo}(\mod^{\ge\ell}A))\perp\DDD^{\bo}(\mod_0^{<\ell}A)=\DD_A^{\ge\ell}\perp\DDD^{\bo}(\mod_0^{<\ell}A).
\end{equation}
Replacing $\ell$ by $a-\ell+1$, we have $\DD_{A^{\op}}=\DD_{A^{\op}}^{>a-\ell}\perp\DDD^{\bo}(\mod_0^{\le a-\ell}A^{\op})$. Applying $(-)^*$, we have
\begin{equation}\label{second SOD2}
\DD_A=\DDD^{\bo}(\mod_0^{\le a-\ell}A^{\op})^*\perp(\DD_{A^{\op}}^{>a-\ell})^*\stackrel{{\rm (A3)}}{=}\DDD^{\bo}(\mod_0^{\ge\ell}A)\perp(\DD_{A^{\op}}^{>a-\ell})^*.
\end{equation}
Since $\DD_A^{\ge\ell}\supseteq\DDD^{\bo}(\mod_0^{\ge\ell}A)$, applying Lemma \ref{modular}(c) to \eqref{first SOD2} and \eqref{second SOD2} gives
$\DD_A=\DDD^{\bo}(\mod_0^{\ge\ell}A)\perp(\DD_A^{\ge\ell}\cap(\DD_{A^{\op}}^{>a-\ell})^*)\perp\DDD^{\bo}(\mod_0^{<\ell}A)$ as desired.
The last assertion follows from
\[\DD_A/\DDD^{\bo}(\mod_0^{\Z}A)
\stackrel{{\rm Prop. }\ref{SOD of S}}{=}\DD_A/(\DDD^{\bo}(\mod_0^{\ge\ell}A)\perp\DDD^{\bo}(\mod_0^{<\ell}A))\stackrel{}{\simeq}\DD_A^{\ge \ell}\cap(\DD_{A^{\op}}^{>a-\ell})^*.\]

(c) Assume $a\ge0$. Then $\DD_{A}^{\ge\ell-a}=\DD_{A}^{\ge\ell}\perp\DDD^{\bo}(\mod^{[\ell-a,\ell-1]}A)$ holds.
Since $\DDD^{\bo}(\mod^{[\ell-a,\ell-1]}A)=\DDD^{\bo}(\mod^{[a+1-\ell,2a-\ell]}A^{\op})^*\subseteq(\DD_{A^{\op}}^{>a-\ell})^*$, the assertion follows from Lemma \ref{modular}(b).

Assume $a\le0$. Then $\DD_A^{\ge\ell}=\KKK^{\bo}(\proj^{[\ell,\ell-a-1]}A)\perp\DD_A^{\ge\ell-a}$ holds.
Since $\KKK^{\bo}(\proj^{[\ell,\ell-a-1]}A)=\KKK^{\bo}(\proj^{[a+1-\ell,-\ell]}A^{\op})^*\subseteq(\DD_{A^{\op}}^{>a-\ell})^*$, the assertion follows from Lemma \ref{modular}(a).
\end{proof}

\section{Proof of our results}

\subsection{Preliminaries}

We start with recalling the central notion of tilting objects.

\begin{definition}\label{define tilting}
Let $\TT$ be a triangulated category with suspension functor $[1]$.
A full subcategory of $\TT$ is \emph{thick} if it is closed under cones, $[\pm1]$ and direct summands.
We call an object $T\in\TT$ \emph{tilting} (respectively, \emph{silting}) if $\Hom_{\TT}(T,T[i])=0$ holds
for all integers $i\neq0$ (respectively, $i>0$), and the smallest thick subcategory of $\TT$ containing $T$ is $\TT$.
\end{definition}

The principal example of tilting objects appear in $\KKK^{\bo}(\proj\Lambda)$ for a ring $\Lambda$. It has a tilting object given by the stalk complex $\Lambda$ concentrated in degree zero, and a certain converse holds in the sense of Proposition \ref{tilting functor} below. More generally, tilting objects in $\KKK^{\bo}(\proj\Lambda)$ are precisely tilting complexes \cite{Ric} of $\Lambda$, where tilting $\Lambda$-modules \cite{Ha1} are special examples. Note that there are some variations of the definitions of tilting objects \cite{AHK,KK}, for example, the last condition $\thick T=\TT$ is sometimes replaced by  the condition ``if $X\in\TT$ satisfies $\Hom_{\TT}(T,X[i])=0$ for all $i\in\Z$, then $X=0$''. If $\TT$ has arbitrary coproducts, then $T$ is assumed to be compact and ``thick subcategory'' in the last condition is replaced by ``localizing subcategory''.

Recall that a triangulated category is called \emph{algebraic} if it is triangle equivalent to the stable category of a Frobenius category.
Let us recall the following well known result due to Keller \cite{Ke} (see \cite{Ki2} for a detailed proof).

\begin{proposition}\cite{Ke}\label{tilting functor}
Let $\TT$ be an algebraic triangulated category with a tilting object $T$. 
There exists a triangle equivalence $F:\TT\to\KKK^{\bo}(\proj\End_{\TT}(T))$ up to direct summands such that $F(T)=\End_{\TT}(T)$. It is a triangle equivalence if $\TT$ is idempotent complete.
\end{proposition}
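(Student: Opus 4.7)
The plan is to adapt Keller's original proof via DG enhancements. Since $\TT$ is algebraic, we may fix a Frobenius model $\mathcal{E}$ with $\TT \simeq \underline{\mathcal{E}}$ together with a DG enhancement $\mathcal{A}_{\mathrm{dg}}$ of $\TT$ — for instance, the DG category of acyclic complexes of injective objects of $\mathcal{E}$ — so that $H^{0}(\mathcal{A}_{\mathrm{dg}}) \simeq \TT$ as triangulated categories. Choose a DG lift $\widetilde{T} \in \mathcal{A}_{\mathrm{dg}}$ of $T$ and form its DG endomorphism algebra $B := \End^{\bullet}_{\mathcal{A}_{\mathrm{dg}}}(\widetilde{T})$. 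By the tilting hypothesis on $T$, the cohomology satisfies $H^{i}(B) = \Hom_{\TT}(T, T[i]) = 0$ for $i \neq 0$ and $H^{0}(B) = \Lambda := \End_{\TT}(T)$. Hence the usual truncation zig-zag produces quasi-isomorphisms of DG algebras between $B$ and $\Lambda$ (viewed as concentrated in degree zero), inducing a triangle equivalence $\DDD(B) \simeq \DDD(\Lambda)$ which restricts to $\per B \simeq \per \Lambda = \KKK^{\bo}(\proj \Lambda)$.

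Next I would introduce the triangle functor
\begin{equation*}
F := \RHom_{\mathcal{A}_{\mathrm{dg}}}(\widetilde{T}, -) : \TT \longrightarrow \DDD(B) \simeq \DDD(\Lambda),
\end{equation*}
which sends $T$ to $B \simeq \Lambda$. By the very computation of $H^{\bullet}(B)$, the functor $F$ is fully faithful on $\add T$ inside $\TT$. A standard d\'evissage now upgrades this: the full subcategory of $X \in \TT$ for which $F$ induces isomorphisms $\Hom_{\TT}(T, X[i]) \xrightarrow{\sim} \Hom_{\DDD(\Lambda)}(\Lambda, F(X)[i])$ for all $i \in \Z$ is closed under cones, shifts and direct summands and contains $T$; since $T$ generates $\TT$ as a thick subcategory (by the definition of a tilting object), it exhausts $\TT$. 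The symmetric argument in the first variable then shows that $F$ is fully faithful on all of $\TT$.

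Finally, the essential image of $F$ is a thick subcategory of $\KKK^{\bo}(\proj \Lambda)$ containing the generator $\Lambda$, so it coincides with $\KKK^{\bo}(\proj \Lambda)$ up to taking direct summands. This gives the claimed equivalence ``up to direct summands''. When $\TT$ is idempotent complete, the essential image is automatically closed under summands inside $\KKK^{\bo}(\proj \Lambda)$, and thus equals the whole of $\KKK^{\bo}(\proj \Lambda)$, so $F$ becomes a genuine triangle equivalence $\TT \xrightarrow{\sim} \KKK^{\bo}(\proj \Lambda)$. The principal technical hurdle is the setup of the DG enhancement: one needs to know that such an $\mathcal{A}_{\mathrm{dg}}$ exists for any algebraic $\TT$, that objects admit DG lifts, and that the derived Hom construction $\RHom_{\mathcal{A}_{\mathrm{dg}}}(\widetilde{T}, -)$ descends to a well-defined triangle functor $\TT \to \DDD(\Lambda)$. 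Once these enhancement-level issues are in place, the rest of the argument is formal manipulation with perfect complexes and d\'evissage.
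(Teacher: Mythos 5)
Your proof is correct and follows the standard DG-enhancement argument of Keller, which is precisely the route this paper intends: the paper supplies no proof of its own, pointing instead to \cite{Ke} and, for the details, to \cite{Ki2}. The DG lift of $T$, the truncation zig-zag $B \leftarrow \tau_{\le 0} B \to H^0(B)=\Lambda$ justified by $H^i(B)=\Hom_\TT(T,T[i])=0$ for $i\neq 0$, the derived Hom functor out of the lift, and the two-step d\'evissage upgrading full faithfulness from $\add T$ to $\thick T=\TT$ are exactly the ingredients of that reference proof.
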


Let $k$ be a field and $D$ the $k$-dual.
For a finite dimensional $k$-algebra $\Lambda$, we denote by
\[\nu=-\Lotimes_\Lambda(D\Lambda):\KKK^{\bo}(\proj\Lambda)\simeq\KKK^{\bo}(\inj\Lambda)\]
the \emph{Nakayama functor}.
If $\Lambda$ is Iwanaga-Gorenstein, then $\nu$ is an autoequivalence of $\KKK^{\bo}(\proj\Lambda)=\KKK^{\bo}(\inj\Lambda)$.
The following result due to Happel is also well known.

\begin{proposition}\cite{Ha1}\label{AR duality for D}
Let $\Lambda$ be a finite dimensional $k$-algebra. Then we have a functorial isomorphism
\[D\Hom_{\DDD(\Mod\Lambda)}(X,Y)\simeq\Hom_{\DDD(\Mod\Lambda)}(Y,\nu X)\]
for any $X\in\KKK^{\bo}(\proj\Lambda)$ and $Y\in\DDD(\Mod\Lambda)$.
In particular, if $\Lambda$ is Iwanaga-Gorenstein, then $\KKK^{\bo}(\proj\Lambda)$ has a Serre functor $\nu$.
\end{proposition}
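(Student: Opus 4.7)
The plan is to reduce the duality to a single functorial quasi-isomorphism
\[\nu X\simeq D\RHom_\Lambda(X,\Lambda)\qquad\text{in }\DDD(\Mod\Lambda)\text{ for all }X\in\KKK^{\bo}(\proj\Lambda),\]
and then unfold this via tensor-hom adjunction. Both sides of the proposed quasi-isomorphism are triangulated functors of $X$ compatible with shifts, cones, and direct summands; since both evaluate to $D\Lambda$ at the generator $X=\Lambda$, a routine dévissage inside $\KKK^{\bo}(\proj\Lambda)$ propagates the isomorphism to every perfect $X$.

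Next I would write $D=\RHom_k(-,k)$ and apply the tensor-hom adjunction pairing $-\Lotimes_\Lambda M$ against $\RHom_k(M,-)$, which yields
\[\RHom_\Lambda\bigl(Y,\,D\RHom_\Lambda(X,\Lambda)\bigr)\simeq D\bigl(Y\Lotimes_\Lambda\RHom_\Lambda(X,\Lambda)\bigr).\]
For $X$ perfect, the standard identity $Y\Lotimes_\Lambda\RHom_\Lambda(X,\Lambda)\simeq\RHom_\Lambda(X,Y)$ (again verified first at $X=\Lambda$ and extended by dévissage) rewrites the right-hand side as $D\RHom_\Lambda(X,Y)$. Taking $H^0$ produces the sought isomorphism $\Hom_{\DDD(\Mod\Lambda)}(Y,\nu X)\simeq D\Hom_{\DDD(\Mod\Lambda)}(X,Y)$, and functoriality in both variables is automatic from the construction since all of the quasi-isomorphisms above are natural.

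For the final clause, when $\Lambda$ is Iwanaga-Gorenstein the bimodule $D\Lambda$ has finite projective dimension on each side, so $\nu=-\Lotimes_\Lambda D\Lambda$ restricts to an endofunctor of $\KKK^{\bo}(\proj\Lambda)$, with quasi-inverse $-\Lotimes_\Lambda\RHom_\Lambda(D\Lambda,\Lambda)$ provided by the same finiteness on the opposite side. Specializing the duality to $Y\in\KKK^{\bo}(\proj\Lambda)$ then exhibits $\nu$ as a Serre functor on $\KKK^{\bo}(\proj\Lambda)$ in the sense of Bondal-Kapranov.

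The main subtlety I expect is careful bookkeeping of the left/right $\Lambda$-module structures on $\RHom_\Lambda(X,\Lambda)$ and on $D\Lambda$, so that the tensor-hom adjunction is applied in the correct module category and $\nu X$ genuinely lands in $\DDD(\Mod\Lambda)$ with the intended action. Once that is pinned down, the two dévissage steps and the final specialization to the Iwanaga-Gorenstein case are formal.
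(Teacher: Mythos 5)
The paper does not supply a proof of this proposition at all---it is quoted directly from Happel \cite{Ha1}---so there is no ``paper proof'' against which to compare. Your argument is the standard one, and it is correct in outline: replace $\nu X$ by $D\RHom_\Lambda(X,\Lambda)$, then convert with tensor-hom adjunction and the perfect-complex identity $Y\Lotimes_\Lambda\RHom_\Lambda(X,\Lambda)\simeq\RHom_\Lambda(X,Y)$, and finally take $H^0$.

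One point to tighten: in each of your two d\'{e}vissage steps you need more than agreement of the two functors at the single object $X=\Lambda$; you need to first write down the canonical natural transformation (for instance, $X\Lotimes_\Lambda D\Lambda \to D\RHom_\Lambda(X,\Lambda)$ induced by $x\otimes\phi\mapsto(f\mapsto\phi(f(x)))$, and $Y\Lotimes_\Lambda\RHom_\Lambda(X,\Lambda)\to\RHom_\Lambda(X,Y)$ induced by $y\otimes f\mapsto(x\mapsto y\,f(x))$), check it is an isomorphism at $X=\Lambda$, and only then propagate along shifts, cones, and summands. As you note, the left/right bimodule bookkeeping must be done so that $\RHom_\Lambda(X,\Lambda)$ is treated as a complex of left modules and the adjunction is taken over $k$; with that settled your proof is complete. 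For the Iwanaga-Gorenstein clause, the fact that $\projdim(D\Lambda)_\Lambda=\injdim{}_\Lambda\Lambda<\infty$ and $\projdim{}_\Lambda(D\Lambda)=\injdim\Lambda_\Lambda<\infty$ guarantees, as you say, that $\nu$ and its adjoint $\RHom_\Lambda(D\Lambda,-)$ both preserve $\KKK^{\bo}(\proj\Lambda)$ and are mutually quasi-inverse there, so the restricted duality exhibits $\nu$ as a Serre functor.
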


Now we prove the following general observation.

\begin{proposition}\label{IwanagaGorenstein}
Let $\TT$ be a Hom-finite $k$-linear algebraic triangulated category with Serre functor $\S$.
Let $T\in\TT$ be a tilting object and $\Lambda:=\End_{\TT}(T)$.
Then the following holds true.
\begin{enumerate}[\rm(a)]
\item $\Lambda$ is an Iwanaga-Gorenstein $k$-algebra.
\item There is a triangle equivalence $F:\TT\simeq\KKK^{\bo}(\proj\Lambda)$ up to direct summands and the following commutative diagram up to an isomorphism of functors.
\[\xymatrix@R1.5em{
\TT\ar[rr]^F\ar[d]^{\S}&&\KKK^{\bo}(\proj\Lambda)\ar[d]^{\nu}\\
\TT\ar[rr]^F&&\KKK^{\bo}(\proj\Lambda)
}\]
\end{enumerate}
\end{proposition}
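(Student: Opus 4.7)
The plan is to apply Keller's theorem (Proposition \ref{tilting functor}) to obtain an equivalence $F\colon\TT\simeq\KKK^{\bo}(\proj\Lambda)$ with $F(T)=\Lambda$, and then to deduce the Iwanaga-Gorenstein property of $\Lambda$ from the Serre functor $\S$ on $\TT$, using the uniqueness of Serre functors to match it with $\nu$. Since $\TT$ is Hom-finite with a Serre functor, it is Krull-Schmidt and in particular idempotent complete, so Proposition \ref{tilting functor} does provide an actual triangle equivalence $F$, not merely an equivalence up to summands.

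For (a), I would analyse $F\S T$ and $F\S^{-1}T$ in $\KKK^{\bo}(\proj\Lambda)$ via their (co)homology. Using that $F$ is fully faithful with $F(T)=\Lambda$, that $T$ is tilting, and the two equivalent forms of Serre duality, one computes
\[
H^i(F\S T)=\Hom(\Lambda,F\S T[i])\cong\Hom_{\TT}(T,\S T[i])\cong D\Hom_{\TT}(T,T[-i]),
\]
which vanishes for $i\ne0$ and equals $D\Lambda$ for $i=0$, and dually
\[
\Hom(F\S^{-1}T,\Lambda[j])\cong\Hom_{\TT}(\S^{-1}T,T[j])\cong D\Hom_{\TT}(T,T[-j]),
\]
concentrated in degree $j=0$ with value $D\Lambda$. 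Thus $F\S T$ is a finite projective resolution of $D\Lambda$ as a right $\Lambda$-module, while taking $\RHom_\Lambda(-,\Lambda)$ of a representative of $F\S^{-1}T$ by a bounded complex of finitely generated projective right $\Lambda$-modules produces a bounded complex of finitely generated projective left $\Lambda$-modules with cohomology $D\Lambda$ concentrated in degree $0$. Via the standard identities $\projdim D\Lambda_\Lambda=\injdim{}_\Lambda\Lambda$ and $\projdim{}_\Lambda D\Lambda=\injdim\Lambda_\Lambda$ (coming from the fact that the $k$-linear duality $D$ exchanges left with right and projective with injective), this proves $\Lambda$ is Iwanaga-Gorenstein.

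For (b), now that (a) is established, Proposition \ref{AR duality for D} shows that $\nu$ restricts to an autoequivalence of $\KKK^{\bo}(\proj\Lambda)$ which is a Serre functor. On the other hand, transporting $\S$ through the equivalence $F$ produces the Serre functor $F\S F^{-1}$ on $\KKK^{\bo}(\proj\Lambda)$. By the uniqueness of Serre functors up to natural isomorphism, $F\S F^{-1}\cong\nu$, which is exactly the commutative diagram asserted. The main care needed is in the cohomological bookkeeping in (a), particularly keeping track of left and right $\Lambda$-module structures under $\Hom(-,\Lambda)$ and $D$; once these identifications are in place, the rest is a direct application of Serre duality and the universality of the Nakayama functor.
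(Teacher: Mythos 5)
Your argument agrees in outline with the paper's proof (invoke Keller's theorem via Proposition \ref{tilting functor}, then analyse the Serre functor), but the core step of part (a) is carried out differently. The paper works in the unbounded derived category $\DDD(\Mod\Lambda)$: from the natural isomorphism $\Hom_\TT(-,\S\Lambda)\simeq\Hom_{\DDD(\Mod\Lambda)}(-,D\Lambda)|_\TT$ it extracts, via Yoneda, a single morphism $f\colon\S\Lambda\to D\Lambda$ inducing it; the cone $C$ of $f$ then satisfies $\Hom(\Lambda[i],C)=0$ for all $i$, hence $C=0$, so $D\Lambda\simeq\S\Lambda$ lies in $\KKK^{\bo}(\proj\Lambda)$ and has finite projective dimension. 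The left-sided statement is obtained by the same argument applied to $\TT^{\op}$. You instead compute $H^i(F\S T)$ and $\Hom(F\S^{-1}T,\Lambda[j])$ directly from the tilting property and Serre duality, and read off finite projective resolutions of $D\Lambda$ on each side from the resulting bounded complexes of projectives. Your version is more computational and avoids the Yoneda/cone step, at the cost of the left/right bimodule bookkeeping you rightly flag (which does check out). Part (b) is handled identically in both.

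One inaccuracy in your framing: Hom-finiteness plus a Serre functor does \emph{not} force $\TT$ to be Krull--Schmidt or even idempotent complete. For instance, inside $\DDD^{\bo}(\mod k)$ the full triangulated (non-thick) subcategory generated by $k^2$ under shifts and cones is Hom-finite, inherits the identity Serre functor, has $k^2$ as a tilting object with $\End(k^2)=M_2(k)$, yet does not contain $k$ and so is not idempotent complete. This is precisely why the proposition is phrased ``up to direct summands''. Fortunately your cohomology computations never actually use idempotent completeness of $\TT$ — they work verbatim for a fully faithful $F$ whose essential image generates $\KKK^{\bo}(\proj\Lambda)$ as a thick subcategory — so the rest of your proof is unaffected.
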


\begin{proof}
(a) By Proposition \ref{tilting functor}, we may regard $\TT$ as a full triangulated subcategory of $\KKK^{\bo}(\proj\Lambda)$ and $T=\Lambda$. Then we have isomorphisms
\[\Hom_{\TT}(-,\S\Lambda)\simeq D\Hom_{\TT}(\Lambda,-)=D\Hom_{\DDD(\Mod\Lambda)}(\Lambda,-)|_{\TT}\simeq
\Hom_{\DDD(\Mod\Lambda)}(-,D\Lambda)|_{\TT}.\]
By Yoneda's Lemma, there is a morphism $f:\S\Lambda\to D\Lambda$ in $\DDD(\Mod\Lambda)$ which induces an isomorphism $\Hom_{\TT}(-,\S \Lambda)\to\Hom_{\DDD(\Mod\Lambda)}(-,D\Lambda)|_{\TT}$.
Then the cone $C\in\DDD(\Mod\Lambda)$ of $f$ satisfies $\Hom_{\DDD(\Mod\Lambda)}(\TT,C)=0$.
Since $\Lambda\in\TT$, we have $C=0$.
Thus $D\Lambda\simeq \S\Lambda$ belongs to $\TT$, and therefore $\projdim (D\Lambda)_\Lambda<\infty$.
On the other hand, since $\TT^{\op}$ also has a Serre functor, we have $\projdim_\Lambda(D\Lambda)<\infty$. Thus $\Lambda$ is Iwanaga-Gorenstein.

(b) This is immediate from Proposition \ref{AR duality for D} and the uniqueness of Serre functors.
\end{proof}

As an application of Proposition \ref{IwanagaGorenstein}, we give a direct proof of the observation below. Note that it was known to experts as a consequence of \cite[Theorem 3.4]{Ha} and \cite[Theorem I.2.4]{RV}.

\begin{proposition}\cite[Corollary 3.9]{C}
Let $\Lambda$ be a finite dimensional $k$-algebra. Then $\KKK^{\bo}(\proj\Lambda)$ has a Serre functor if and only if $\Lambda$ is Iwanaga-Gorenstein.
\end{proposition}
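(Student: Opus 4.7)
The plan is to deduce both directions as essentially immediate consequences of the two preceding propositions, so the work is really in identifying which hypotheses are satisfied.

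For the ``if'' direction, assume $\Lambda$ is Iwanaga-Gorenstein. Then $D\Lambda$ has finite projective dimension on both sides, so the Nakayama functor $\nu=-\Lotimes_{\Lambda}(D\Lambda)$ sends $\KKK^{\bo}(\proj\Lambda)$ to itself and, by the standard argument, is an autoequivalence with quasi-inverse $\RHom_\Lambda(D\Lambda,-)$. Proposition \ref{AR duality for D} then directly identifies $\nu$ as a Serre functor on $\KKK^{\bo}(\proj\Lambda)$.

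For the ``only if'' direction, suppose $\KKK^{\bo}(\proj\Lambda)$ admits a Serre functor $\S$. I would verify the hypotheses of Proposition \ref{IwanagaGorenstein} for $\TT=\KKK^{\bo}(\proj\Lambda)$: it is $\Hom$-finite over $k$ since $\Lambda$ is finite dimensional; it is algebraic, being (equivalent to) the stable category of the Frobenius category $\CCC^{\bo}(\proj\Lambda)$ of bounded complexes of projectives with split short exact sequences; it is idempotent complete; and it contains the tilting object $T=\Lambda$ with $\End_{\TT}(T)=\Lambda$. Proposition \ref{IwanagaGorenstein}(a) applied to this $T$ then yields that $\Lambda$ is Iwanaga-Gorenstein, as required.

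The only potentially subtle point is making sure there is nothing circular or missing in invoking Proposition \ref{IwanagaGorenstein}: its proof uses Yoneda in $\DDD(\Mod\Lambda)$ to show that $\S\Lambda\simeq D\Lambda$ belongs to $\TT=\KKK^{\bo}(\proj\Lambda)$, which forces $\projdim(D\Lambda)_\Lambda<\infty$; then a symmetric argument on $\TT^{\op}$ (which also has a Serre functor, namely $\S^{-1}$) gives $\projdim{}_\Lambda(D\Lambda)<\infty$. So the plan reduces to citing Propositions \ref{AR duality for D} and \ref{IwanagaGorenstein} and checking that $\KKK^{\bo}(\proj\Lambda)$ is a Hom-finite, algebraic, idempotent-complete triangulated category with $\Lambda$ as tilting object, none of which should present any real obstacle.
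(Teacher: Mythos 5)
Your proposal is correct and takes essentially the same approach as the paper: the ``if'' direction is Proposition \ref{AR duality for D}, and the ``only if'' direction is Proposition \ref{IwanagaGorenstein}(a) applied to the tilting object $\Lambda\in\KKK^{\bo}(\proj\Lambda)$. The extra detail you supply (Hom-finiteness, algebraicity via the Frobenius category of bounded complexes of projectives with split exact structure, idempotent completeness) is exactly the verification the paper leaves implicit.
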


\begin{proof}
The `if' part is Proposition \ref{AR duality for D}, and the `only if' part is Proposition \ref{IwanagaGorenstein}.
\end{proof}

In the rest of this subsection, let $R$ be a $\Z$-graded Gorenstein ring in dimension $d$ such that $R=R_{\ge0}$ and $k:=R_0$ is a field, and $a$ the $a$-invariant of $R$.

The following Auslander-Reiten-Serre duality is basic.

\begin{proposition}\cite{AR,IT}\label{AR duality for CM}
Under the above setting, there is a functorial isomorphism
\begin{equation*}
\underline{\Hom}^{\Z}_R(X,Y)\simeq D\underline{\Hom}^{\Z}_R(Y,X(a)[d-1]).
\end{equation*}
for any $X\in\CM^{\Z}R$ and $Y\in\CM^{\Z}_0R$.
\end{proposition}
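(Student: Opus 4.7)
The plan is to deduce the duality by combining two standard ingredients: the Frobenius structure on $\CM^{\Z} R$ (which rewrites the right-hand side as a graded $\Ext$ with shift equal to the dimension), and graded Serre-Grothendieck (local) duality, which supplies both the $k$-linear dual and the twist by $(a)$. The $(a)[d-1]$ appearance is natural: $[d-1]$ because the suspension on $\underline{\CM}^{\Z} R$ is the inverse syzygy $\Omega^{-1}$ and $R$ has dimension $d$; and $(a)$ because the graded canonical module of $R$ is $\omega_R\simeq R(a)$, which is precisely what the definition of the $a$-invariant codifies via $\Ext^d_R(k,R(a))\simeq k$.

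First, I would interpret the right-hand side as a graded $\Ext$. Since $R$ is Gorenstein with $R_0=k$ a field, $\CM^{\Z} R$ is a Frobenius exact category whose projective-injective objects are exactly $\proj^{\Z} R$, and the triangulated suspension on $\underline{\CM}^{\Z} R$ is $\Omega^{-1}$. Applying $\Hom^{\Z}_R(Y,-)$ to the short exact sequences defining the iterated cosyzygies of $X(a)$ and using that $\Ext^{\geq 1}_R(Y,P)=0$ for any $P\in\proj^{\Z} R$, one obtains by dimension-shifting, for each $n\geq 1$, a natural isomorphism
\[
\underline{\Hom}^{\Z}_R(Y,X(a)[n])\simeq \Ext^n_R(Y,X(a))_0=\Ext^n_R(Y,X)_a.
\]
Specializing to $n=d-1$ (for $d\geq 1$; the case $d=0$ being trivial) reduces the claim to producing a functorial isomorphism $\underline{\Hom}^{\Z}_R(X,Y)\simeq D\bigl(\Ext^{d-1}_R(Y,X)_a\bigr)$.

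Next, I would invoke graded Serre-Grothendieck (local) duality, using that the graded dualizing complex of $R$ is $\omega_R^{\bullet}\simeq R(a)[d]$. The assumption $Y\in\CM_0^{\Z} R$ forces $Y_{\pp}\in\proj R_{\pp}$ for every non-maximal prime $\pp$, so $\Ext^{\geq 1}_R(Y,X)$ is supported at $\mathfrak{m}$ and is of finite length; on such modules the graded Matlis dual coincides with the $k$-linear dual $D$. Applying local duality to $R\Hom_R(Y,X)\in\DDD^{\bo}(\mod^{\Z}R)$ and using the Grothendieck spectral sequence $E_2^{p,q}=H^p_{\mathfrak{m}}(\Ext^q_R(Y,X))\Rightarrow \mathbb{H}^{p+q}R\Gamma_{\mathfrak{m}}R\Hom_R(Y,X)$, the $\CM_0$ assumption on $Y$ (which kills the columns $p\geq 1,\,q\geq 1$) and the CM assumption on $X$ (which concentrates $H^*_{\mathfrak{m}}$ of torsion-free quotients into degrees $\{0,d\}$) cause the spectral sequence to collapse in a controlled way, identifying the appropriate cohomology with $\underline{\Hom}^{\Z}_R(X,Y)$ on one side and, via graded local duality, with $D\bigl(\Ext^{d-1}_R(Y,X)_a\bigr)$ on the other.

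The main technical obstacle is this final step: tracking the spectral sequence carefully enough to match the cokernel description of the stable Hom $\underline{\Hom}^{\Z}_R(X,Y)=\Hom^{\Z}_R(X,Y)/(\textnormal{factoring through }\proj^{\Z} R)$ with the correct subquotient of $H^{*}_{\mathfrak{m}}R\Hom_R(Y,X)$, and in particular disentangling contributions from $\Hom_R(Y,X)$ (whose local cohomology may sit in degrees $\{0,d\}$) from those of the higher $\Ext^{\geq 1}$ (which are $\mathfrak{m}$-torsion). An equivalent, perhaps cleaner, approach is Auslander's: prove that the graded Auslander-Reiten translate in $\underline{\CM}^{\Z} R$, restricted to $\CM_0^{\Z}$, is $\tau=(a)[d-2]$---the twist being forced by $\omega_R\simeq R(a)$---and then deduce the statement from the classical Auslander-Reiten formula $\underline{\Hom}(X,Y)\simeq D\underline{\Hom}(Y,\tau X[1])$.
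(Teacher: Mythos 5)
The paper does not prove this proposition: it is stated with citations to Auslander--Reiten and Iyama--Takahashi, so there is no in-paper argument to compare against. Your sketch is broadly in the right spirit, but the first, local-duality route as written contains a concrete gap precisely in the case $d=1$, which is the only case the rest of the paper uses. Your dimension-shifting identification $\underline{\Hom}^{\Z}_R(Y,X(a)[n])\simeq\Ext^n_R(Y,X(a))_0$ is, as you note, valid only for $n\geq1$ (for $n\geq1$ the Tate group $\widehat{\Ext}^n$ agrees with the ordinary $\Ext^n$). Specializing to $n=d-1$ therefore does \emph{not} cover $d=1$: there $n=0$, and $\underline{\Hom}^{\Z}_R(Y,X(a))$ is a proper quotient of $\Hom^{\Z}_R(Y,X(a))$ by the morphisms factoring through a graded projective, so the passage to an ordinary $\Ext$ and hence to the local-duality spectral sequence for $\RHom_R(Y,X)$ fails. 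Your parenthetical ``for $d\geq1$; the case $d=0$ being trivial'' overlooks this.

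The cleaner and uniformly valid reduction is the one you gesture at only in your closing sentence: shift by one less and use the graded AR translate. Since the suspension is $\Omega^{-1}$ and the Serre functor is $(a)[d-1]$, set $\tau=(a)[d-2]$; then for \emph{all} $d\geq1$ one has $\underline{\Hom}^{\Z}_R(Y,X(a)[d-1])=\underline{\Hom}^{\Z}_R(Y,\tau X[1])\simeq\Ext^1_R(Y,\tau X)_0$, an honest $\Ext^1$ with no boundary cases. The statement then becomes the graded Auslander--Reiten formula $\underline{\Hom}^{\Z}_R(X,Y)\simeq D\Ext^1_R(Y,\tau X)_0$, which can be established via Auslander's defect formula (or graded local duality applied to $\Ext^1_R(Y,\tau X)$, a finite-length module because $Y\in\CM^{\Z}_0R$). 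This is precisely the route taken in the cited references, and it sidesteps the $d=1$ problem entirely. Also note, separately, that your Grothendieck spectral-sequence step is asserted rather than carried out (you yourself flag it as ``the main technical obstacle''), so even for $d\geq2$ your first argument is a plan rather than a proof; I would recommend discarding it in favor of developing the $\tau$-based argument.
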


The results above give the following important observation.

\begin{theorem}\label{tilting functor for CM}
Under the above setting, we assume that $\underline{\CM}^{\Z}_0R$ has a tilting object $U$.
\begin{enumerate}[\rm(a)]
\item $\Lambda:=\underline{\End}_R^{\Z}(U)$ is an Iwanaga-Gorenstein ring.
\item There is a triangle equivalence $F:\underline{\CM}^{\Z}_0R\simeq\KKK^{\bo}(\proj\Lambda)$ and the following commutative diagram up to an isomorphism of functors.
\[\xymatrix@R1.5em{
\underline{\CM}^{\Z}_0R\ar[rr]^F\ar[d]^{(a)}&&\KKK^{\bo}(\proj\Lambda)\ar[d]^{\nu[1-d]}\\
\underline{\CM}^{\Z}_0R\ar[rr]^F&&\KKK^{\bo}(\proj\Lambda)
}\]
\end{enumerate}
\end{theorem}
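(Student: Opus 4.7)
The plan is to reduce the statement directly to Proposition \ref{IwanagaGorenstein} by identifying a Serre functor on $\underline{\CM}^{\Z}_0R$ and verifying the hypotheses of that general result. First, I note that $\underline{\CM}^{\Z}_0R$ is algebraic because it is a full triangulated subcategory of the stable category of the Frobenius category $\CM^{\Z}R$. It is Hom-finite over $k$ (this is implicit in the formulation of Proposition \ref{AR duality for CM}, where a $k$-duality between Hom spaces is asserted), and it is idempotent complete, since $\CM^{\Z}_0R$ consists of finitely generated modules over a Noetherian ring and is therefore Krull-Schmidt.

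Next, I read off a Serre functor from Proposition \ref{AR duality for CM}. The functor
\[
\S := (a)[d-1]\colon \underline{\CM}^{\Z}_0R \longrightarrow \underline{\CM}^{\Z}_0R
\]
is well-defined, since both the grading shift $(a)$ and the suspension $[1]$ (induced by the syzygy) preserve the subcategory $\CM^{\Z}_0R$. By Proposition \ref{AR duality for CM}, for $X,Y\in\underline{\CM}^{\Z}_0R$ there is a functorial isomorphism
\[
\underline{\Hom}^{\Z}_R(X,Y)\simeq D\underline{\Hom}^{\Z}_R(Y,\S X),
\]
so $\S$ is a Serre functor on $\underline{\CM}^{\Z}_0R$.

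Now I apply Proposition \ref{IwanagaGorenstein} with $\TT=\underline{\CM}^{\Z}_0R$ and tilting object $T=U$. Part (a) of that proposition yields at once that $\Lambda = \End_{\TT}(U) = \underline{\End}^{\Z}_R(U)$ is Iwanaga-Gorenstein, giving (a). Part (b) yields a triangle equivalence $F\colon \TT\to\KKK^{\bo}(\proj\Lambda)$ up to summands, which is a genuine equivalence by the idempotent completeness of $\TT$ combined with Proposition \ref{tilting functor}, and which satisfies $F\circ\S\simeq\nu\circ F$. Substituting $\S=(a)[d-1]$ and using that $F$ is a triangle functor (so it commutes with $[1-d]$), this becomes
\[
F\circ(a)\simeq \nu[1-d]\circ F,
\]
which is precisely the commutative diagram in (b).

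The only substantive obstacles are the technical hypotheses required by Proposition \ref{IwanagaGorenstein}, namely Hom-finiteness and idempotent completeness of $\underline{\CM}^{\Z}_0R$; both are standard consequences of Noetherianness together with the $\CM_0$ condition and should be established earlier in the paper. Given these, the theorem is a clean application of the general Serre-functor formalism, with Proposition \ref{AR duality for CM} supplying the Serre functor $(a)[d-1]$ and Proposition \ref{IwanagaGorenstein} converting this information into the Iwanaga-Gorensteinness of $\Lambda$ and the compatibility between $(a)$ and $\nu[1-d]$.
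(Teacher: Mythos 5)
Your proposal is correct and takes essentially the same approach as the paper, which proves the theorem in one line by citing exactly the same three ingredients: Proposition \ref{tilting functor}, Proposition \ref{AR duality for CM} (supplying the Serre functor $(a)[d-1]$), and Proposition \ref{IwanagaGorenstein}. Your version simply spells out the details that the paper declares ``immediate.''
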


\begin{proof}
The assertion is immediate from Propositions \ref{tilting functor}, \ref{AR duality for CM} and \ref{IwanagaGorenstein}.
\end{proof}

We give an analogue of Buchweitz's description of $\underline{\CM}^{\Z}R$ \cite{Buc} for $\underline{\CM}_0^{\Z}R$.

\begin{proposition}\label{subcategory of Dsg}
Under the above setting, let $\DD_R:=\thick\{\proj^{\Z}R,\mod_0^{\Z}R\}\subseteq\DDD^{\bo}(\mod^{\Z}R)$.
Then the triangle equivalence $\DDD^{\bo}(\mod^{\Z}R)/\KKK^{\bo}(\proj^{\Z}R)\simeq\underline{\CM}^{\Z}R$ restricts to a triangle equivalence
\[\DD_R/\KKK^{\bo}(\proj^{\Z}R)\simeq\underline{\CM}^{\Z}_0R.\]
\end{proposition}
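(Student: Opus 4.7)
The inclusion $\DD_R\hookrightarrow\DDD^{\bo}(\mod^{\Z}R)$ followed by the Buchweitz equivalence $\DDD^{\bo}(\mod^{\Z}R)/\KKK^{\bo}(\proj^{\Z}R)\simeq\underline{\CM}^{\Z}R$ induces a triangle functor
\[
F\colon\DD_R/\KKK^{\bo}(\proj^{\Z}R)\longrightarrow\underline{\CM}^{\Z}R.
\]
Since $\DD_R$ is a thick subcategory of $\DDD^{\bo}(\mod^{\Z}R)$ containing $\KKK^{\bo}(\proj^{\Z}R)$, this functor is automatically fully faithful, so the task reduces to identifying its essential image with $\underline{\CM}_0^{\Z}R$.

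For the containment of the image of $F$ in $\underline{\CM}_0^{\Z}R$, I first verify that $\underline{\CM}_0^{\Z}R$ is a thick subcategory of $\underline{\CM}^{\Z}R$. This follows because $\CM_0^{\Z}R$ is closed under extensions, direct summands, and both syzygies and cosyzygies---each verified by localizing a short exact sequence at a non-maximal prime $\pp$ and observing that projectivity of $R_\pp$-modules is preserved. Since $\DD_R$ is generated as a thick subcategory by $\proj^{\Z}R$ (which $F$ sends to zero) together with $\mod_0^{\Z}R$, it suffices to show $F$ sends $\mod_0^{\Z}R$ into $\underline{\CM}_0^{\Z}R$. For $M\in\mod_0^{\Z}R$, its image under $F$ is represented by the $d$-th syzygy $\syz^d M\in\CM^{\Z}R$. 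Because $M_\pp=0$ for every $\pp\neq R_{>0}$, localizing a graded projective resolution of $M$ at such $\pp$ gives a bounded-above acyclic complex of projective $R_\pp$-modules, which is contractible; hence $(\syz^d M)_\pp$ is projective, so $\syz^d M\in\CM_0^{\Z}R$.

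For the reverse containment, given $X\in\CM_0^{\Z}R$ viewed as a stalk complex in $\DDD^{\bo}(\mod^{\Z}R)$, I must show $X\in\DD_R=\thick\{\proj^{\Z}R,\mod_0^{\Z}R\}$. When $\dim R=1$ (the principal case of interest), this is direct: since $K\otimes_R X$ is a projective graded $K$-module (Proposition~\ref{CM0=CM02}), choosing a basis and clearing denominators produces an injection $X\hookrightarrow F_0$ into a graded free $R$-module $F_0$; the cokernel is supported at $R_{>0}$ and hence of finite length, giving a short exact sequence $0\to X\to F_0\to N\to 0$ that exhibits $X\in\DD_R$.

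The main obstacle for general $d$ is producing an analogous finite resolution
\[
0\to X\to P^0\to P^1\to\cdots\to P^{d-1}\to N\to 0
\]
with $P^i\in\proj^{\Z}R$ and $N\in\mod_0^{\Z}R$. My plan is to exploit the $R$-dual $X^\vee=\Hom_R(X,R)$, which lies in $\CM_0^{\Z}R^{\op}$ and satisfies $X\simeq X^{\vee\vee}$ together with $\Ext^{>0}_R(X^\vee,R)=0$; dualizing a graded projective resolution of $X^\vee$ yields an exact complex of the required shape, and the control of the terminating cokernel after $d$ steps comes from combining Gorenstein duality with the support-theoretic characterization
\[
\DD_R=\{Y\in\DDD^{\bo}(\mod^{\Z}R)\mid Y_\pp\in\KKK^{\bo}(\proj R_\pp)\ \text{for all}\ \pp\neq R_{>0}\},
\]
a classical fact proved using Koszul resolutions on a homogeneous system of parameters of $R$, in the spirit of \cite{O,IY2}. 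This characterization makes the inclusion $\CM_0^{\Z}R\subseteq\DD_R$ immediate, since $X_\pp\in\proj R_\pp\subseteq\KKK^{\bo}(\proj R_\pp)$ for every non-maximal $\pp$, and the two containments together establish the asserted triangle equivalence.
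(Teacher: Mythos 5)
Your argument that $F(\DD_R)\subseteq\underline{\CM}_0^{\Z}R$ is sound (once one verifies, as you do, that $\underline{\CM}_0^{\Z}R$ is thick), though it is a bit more laborious than the paper's: rather than tracking generators, the paper observes that localization at any $\Z$-graded prime $\pp\neq R_{>0}$ commutes with the Buchweitz equivalence, so $X\in\DD_R$ has $X_\pp$ perfect, whence $F(X)_\pp=0$ in $\underline{\CM}^{\Z}R_\pp$.

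The real problem is your density argument. Your construction for $d=1$ breaks down: if $K\otimes_R X$ is merely projective over $K$, and not free, there is no embedding of $X$ into a graded free $R$-module with finite-length cokernel. Take $R=k[x,y]/(xy)$ with the standard grading and $X=(x)$. Then $K=k[x^{\pm 1}]\times k[y^{\pm 1}]$ and $K\otimes_R X\simeq k[x^{\pm 1}]\times 0$, which is projective but not free; any embedding $X\hookrightarrow R^n$ would, after applying the flat functor $K\otimes_R-$, give an embedding $k[x^{\pm 1}]\hookrightarrow K^n$ whose cokernel contains $(k[y^{\pm 1}])^n\neq 0$, so the cokernel over $R$ cannot be of finite length. (Here $X=(x)$ does lie in $\DD_R$, but only indirectly: $(x)\oplus(y)\simeq R_{>0}$, which sits in $0\to R_{>0}\to R\to k\to 0$, and then one uses that $\DD_R$ is closed under direct summands.) This is exactly the kind of subtlety that makes density nontrivial. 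Your general-$d$ outline has the same gap in disguise: the ``classical'' support-theoretic characterization $\DD_R=\{Y\mid Y_\pp\in\KKK^{\bo}(\proj R_\pp)\ \text{for all}\ \pp\neq R_{>0}\}$ is, in its nontrivial direction, precisely the density statement you are trying to prove, so invoking it is circular unless you supply an independent proof. The paper instead cites the theorem of O'Carroll--Popescu \cite[Theorem 2.2]{OP}, which is the genuine local-to-global input needed here; your proof needs either that reference or a substitute argument (for instance a Koszul-complex argument done carefully), and the naive ``clear denominators'' construction is not one.
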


\begin{proof}
For any $\Z$-graded prime ideal $\pp$ of $R$, the following diagram is commutative up to an isomorphism of functors.
\[\xymatrix@R1.5em{
\DDD^{\bo}(\mod^{\Z}R)/\KKK^{\bo}(\proj^{\Z}R)\ar[r]_(.7)\sim^(.7)F\ar[d]^{(-)_\pp}&\underline{\CM}^{\Z}R\ar[d]^{(-)_\pp}\\
\DDD^{\bo}(\mod^{\Z}R_\pp)/\KKK^{\bo}(\proj^{\Z}R_\pp)\ar[r]_(.7)\sim^(.7){F_\pp}&\underline{\CM}^{\Z}R_\pp}\]
Let $X\in\DD_R$. For any $\pp\neq R_{>0}$, we have $X_\pp\in\KKK^{\bo}(\proj^{\Z}R_{\pp})$ and hence $F(X)_\pp=F_\pp(X_\pp)=0$.
Thus $F(X)\in\underline{\CM}^{\Z}_0R$ holds, and hence $F$ restricts a fully faithful triangle functor $\DD_R/\KKK^{\bo}(\proj^{\Z}R)\to\underline{\CM}^{\Z}_0R$.
This is dense by \cite[Theorem 2.2]{OP}.
\end{proof}

\subsection{Basic properties of $\Z$-graded modules}
In this subsection, we assume that $R$ is a ring satisfying (R1) and (R2). 
Recall that $K$ is the $\Z$-graded total quotient ring of $R$. 
Since $R$ is Cohen-Macaulay, each associated prime ideal of $R$ is minimal. By prime avoidance, there exists a homogeneous non-zero-divisor $r\in R$ with positive degree $p>0$.

We start with the following easy observations.

\begin{lemma}\label{basic}
\begin{enumerate}[\rm(a)]
\item The inclusion functor $\Mod^{\ge0}R\to\Mod^{\Z}R$ has a right adjoint functor $(-)_{\ge0}$.
\item The restriction functor $\Mod^{\Z}K\to\Mod^{\Z}R$ has a left adjoint functor $K\otimes_R-$.
\item For any $X\in\mod^{\Z}K$, we have $K\otimes_R(X_{\ge0})=X$.
\end{enumerate}
\end{lemma}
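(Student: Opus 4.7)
All three parts are essentially formal consequences of basic graded algebra, but I describe each approach in turn.

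For (a), observe that $X_{\ge 0} := \bigoplus_{i \ge 0} X_i$ is an $R$-submodule of any $X \in \Mod^{\Z}R$ because $R = R_{\ge 0}$ forces $R_j \cdot X_i \subseteq X_{i+j}$ to land in non-negative degrees whenever $i,j \ge 0$. The adjunction is now immediate: any graded homomorphism $f : Y \to X$ with $Y \in \Mod^{\ge 0}R$ satisfies $f(Y_i) \subseteq X_i$, which vanishes for $i < 0$ by hypothesis on $Y$, so $f$ factors uniquely through $X_{\ge 0}$. For (b), this is the standard extension-of-scalars/restriction adjunction along the graded ring map $R \hookrightarrow K$, which transfers verbatim to the $\Z$-graded category since $K \otimes_R -$ and the restriction both preserve the grading.

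For (c), fix a homogeneous non-zero-divisor $r \in R$ of positive degree $p$, so that $K = R[r^{-1}]$ by Lemma \ref{basic properties of K}(d). I will show that the multiplication map $\mu : K \otimes_R X_{\ge 0} \to X$, $k \otimes x \mapsto kx$ (which makes sense because $X$ is already a $K$-module), is a $K$-linear bijection. Surjectivity: given $x \in X_i$, choose $n$ with $np + i \ge 0$ so that $r^n x \in X_{\ge 0}$, and then $\mu(r^{-n} \otimes r^n x) = x$. Injectivity: since every element of $K$ has the form $s r^{-n}$ with $s \in R$ and $n \ge 0$, any $\xi \in K \otimes_R X_{\ge 0}$ can be rewritten as $r^{-N} \otimes y$ for a single common $N \ge 0$ and some $y \in X_{\ge 0}$ (bring each summand to denominator $r^N$ and slide the resulting $R$-element $s\, r^{N-n}$ across the tensor into $X_{\ge 0}$). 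If now $\mu(r^{-N} \otimes y) = r^{-N} y = 0$ in $X$, then applying the invertible $K$-action of $r^N$ to both sides yields $y = 0$, whence $\xi = 0$.

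The only step requiring genuine content is the reduction of arbitrary tensors in (c) to the form $r^{-N} \otimes y$, and this rests precisely on the dimension-one input $K = R[r^{-1}]$ from Lemma \ref{basic properties of K}(d); the rest of the argument is formal manipulation of $\Z$-graded modules.
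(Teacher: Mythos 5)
The paper states Lemma \ref{basic} without proof, introducing it as an ``easy observation,'' so there is no argument of the authors' to compare against; your proof is correct. Parts (a) and (b) are the standard adjunctions (truncation is right adjoint to inclusion once one notes $X_{\ge0}$ is an $R$-submodule because $R=R_{\ge0}$; extension of scalars is left adjoint to restriction along $R\hookrightarrow K$) and you handle them appropriately. For (c), your argument via the normal form $r^{-N}\otimes y$ in $K\otimes_R X_{\ge0}$ is sound, and you correctly isolate the genuine input: $K=R[r^{-1}]$, which is Lemma \ref{basic properties of K}(d) and relies on $\dim R=1$. This is technically a forward reference, since Lemma \ref{basic properties of K} is stated after Lemma \ref{basic} in the paper, but it is not circular because the proof of \ref{basic properties of K}(d) does not invoke \ref{basic}. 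For completeness, a slightly cleaner route to injectivity avoids both $K=R[r^{-1}]$ and the normal-form bookkeeping: $K=RS^{-1}$ is a localization, hence flat over $R$, so the exact functor $K\otimes_R-$ sends the inclusion $X_{\ge0}\hookrightarrow X$ to an injection $K\otimes_R X_{\ge0}\hookrightarrow K\otimes_R X\cong X$, the last isomorphism because $X$ is already a $K$-module and $K$ is an $R$-localization; the composite is exactly your multiplication map $\mu$. Your surjectivity argument (lifting $x\in X_i$ via $r^n x$ for $n\gg0$) is the step that actually uses the existence of a homogeneous non-zero-divisor of positive degree, and it is correct.
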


\begin{lemma}\label{basic properties of K1}
We have $K=R[r^{-1}]$. In particular, for $i\gg0$, we have $r:R_i\simeq R_{i+p}$ and $R_i=K_i$.
\end{lemma}

\begin{proof}
To prove $K=R[r^{-1}]$, it suffices to show that each homogeneous non-zero-divisor $x\in K':=R[r^{-1}]$ is invertible.
A bijection between $\Z$-graded prime ideals $\pp$ of $R$ such that $r\notin\pp$ and $\Z$-graded prime ideals of $K'$ is given by $\pp\mapsto \pp K'$.
If $x$ is not invertible, then there exists a $\Z$-graded prime ideal $\pp$ of $R$ such that $x\in\pp K'$ and $r\notin\pp$.
Since $\pp\subsetneq R_{>0}$ and $\dim R=1$, $\pp$ is a minimal prime ideal of $R$ and hence consists of zero-divisors, a contradiction to $x\in\pp K'$.
Thus $K=R[r^{-1}]$ holds.

Since $R/Rr$ is artinian, the remaining assertions follow.
\end{proof}

To give basic properties, recall that, for $X,Y\in\mod^{\Z}R$ and $n\ge0$, $\Ext^n_R(X,Y)$ is a $\Z$-graded $R$-module whose degree $i$ part is $\Ext^n_R(X,Y)_i=\Ext^n_{\mod^{\Z}R}(X,Y(i))$.

\begin{lemma}\label{basic properties of K}
\begin{enumerate}[\rm(a)]
\item We have $R_a\subsetneq K_a$ and $R_{\ge a+1}=K_{\ge a+1}$.
\item For any $i\in\Z$, we have $K(i)\simeq K(i+p)$ and $K(i)_{\ge0}\simeq K(i+p)_{\ge0}$.
\item For any $i\in\Z$, $K(i)_{\ge0}\in\mod^{\Z}R$ holds.
\end{enumerate}
\end{lemma}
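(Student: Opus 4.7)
The plan is to establish the four parts in the order (d), (b), (a), (c), since each subsequent part builds on the preceding ones.

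For (d), the key observation is that $R[r^{-1}]$ is a zero-dimensional Noetherian ring: since $\dim R = 1$, every non-minimal prime of $R$ equals the unique graded maximal ideal $\m = R_{>0}$, and this ideal contains $r$, so only the minimal primes of $R$ survive after inverting $r$. In a zero-dimensional Noetherian ring every non-zero-divisor is a unit, so every element of $S$ becomes invertible in $R[r^{-1}]$, yielding $K = R[S^{-1}] = R[r^{-1}]$. Part (b) then follows immediately: multiplication by $r$ defines a graded $R$-linear map $K \to K(p)$ which is an isomorphism since $r$ is a unit in $K$ by (d), and it restricts to an isomorphism $K_{\ge 0} \to K(p)_{\ge 0}$; twisting by $i$ gives both claims.

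For (a), I would apply $\Hom_R(k,-)$ to the short exact sequence $0 \to R \to K \to K/R \to 0$. The key ingredient is the vanishing $\Ext^i_R(k, K) = 0$ for all $i \ge 0$, which follows from the standard principle that Ext between an $r$-torsion module and an $r$-divisible module vanishes (multiplication by a power of $r$ is zero on $k$ but an automorphism of $K = R[r^{-1}]$, so any Ext class must vanish). This produces a graded isomorphism $\soc(K/R) \simeq \Ext^1_R(k, R)$, which by the definition of the $a$-invariant is one-dimensional and concentrated in degree $a$. To deduce $K_j = R_j$ for all $j > a$, I would take any $x \in (K/R)_j$, write $x = y/s$ with $s$ a homogeneous non-zero-divisor of positive degree, and note that $sx = 0$ in $K/R$. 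The cyclic submodule $Rx$ is then a finitely generated module over the zero-dimensional (hence Artinian) quotient $R/(s)$; thus $Rx$ has finite length and is bounded above in grading, forcing its socle to be nonzero in some degree $\ge j$. Since the total socle lies only in degree $a$, we obtain $j \le a$. The strict inclusion $R_a \subsetneq K_a$ is then immediate from the socle being nonzero in degree $a$.

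Finally, (c) reduces via (a) to showing that $K_{\ge 0}/R$ is finite-dimensional over $k$: it is concentrated in the finite range $0 \le j \le a$, and each $K_j$ is finite-dimensional over $k$ (for $j > a$ because $K_j = R_j$ and $R$ is a finitely generated graded $k$-algebra, and for $j \le a$ by transporting along the isomorphism of (b) to a degree exceeding $a$). A $k$-basis of $K_{\ge 0}/R$, lifted to $K_{\ge 0}$ and joined with $1$, then provides a finite generating set over $R$; the periodicity of (b) reduces $K(i)_{\ge 0}$ for general $i$ to this case. The main obstacle I anticipate is the socle analysis in (a): one must extract the degree bound on $K/R$ purely from the one-dimensionality of its socle, and the crucial enabling fact is that every element of $K/R$ sits inside a finite-length submodule, which itself rests on $\dim R = 1$.
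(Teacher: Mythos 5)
Your treatment of parts (a), (b), (c) is essentially sound. For (a), you apply $\Hom_R(k,-)$ to $0\to R\to K\to K/R\to 0$ and use the vanishing of $\Ext^\bullet_R(k,K)$ (by the standard $r$-torsion versus $r$-divisibility argument) to identify $\soc(K/R)$ with $\Ext^1_R(k,R)\simeq k(-a)$, then exploit that any nonzero homogeneous element of $K/R$ generates a finite-length submodule whose socle must sit inside $\soc(K/R)$. The paper instead realizes $\Ext^1_{\mod^\Z R}(k(-a),R)\neq 0$ through a non-split extension $0\to R\to X\to k(-a)\to 0$ with $X\in\CM^\Z R\subset K$, and runs the complementary contradiction on $K/R$ having a simple submodule in too high a degree; the two arguments are close in spirit and both correct. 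Parts (b) and (c) match the paper's proof.

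However, your argument for (d) has a genuine gap. You claim that $R[r^{-1}]$ is a zero-dimensional Noetherian ring because ``every non-minimal prime of $R$ equals the unique graded maximal ideal $R_{>0}$.'' This is false: when $\dim R=1$ there are in general many height-one primes other than $R_{>0}$, namely the non-graded maximal ideals, and these need not contain $r$. Concretely, take $R=k[t]$ with $r=t$: then $R[r^{-1}]=k[t,t^{-1}]$ has Krull dimension $1$, since $(t-1)$ is a height-one prime. So the step ``in a zero-dimensional Noetherian ring every non-zero-divisor is a unit, hence every element of $S$ becomes invertible'' is not available. What is true, and what the paper uses, is that every non-minimal \emph{graded} prime of $R$ equals $R_{>0}$. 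Hence a homogeneous non-unit $x\in R[r^{-1}]$ generates a proper \emph{graded} ideal, so it lies in some graded prime of $R[r^{-1}]$, corresponding to a graded prime $\pp$ of $R$ with $r\notin\pp$; since $\pp\subsetneq R_{>0}$ and $\dim R=1$, the ideal $\pp$ is minimal and so consists of zero-divisors, forcing $x$ to be a zero-divisor. This gives the conclusion while only ever needing to invert \emph{homogeneous} non-zero-divisors. Your argument can be repaired by inserting ``graded'' in the right places and noting that it suffices to treat homogeneous elements, but as written the key dimension claim is incorrect.
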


\begin{proof}
(a) Since $\Ext^1_{\mod^{\Z}R}(k(-a),R)=\Ext^1_R(k,R)_a\neq0$, there is a non-split short exact sequence $0\to R\to X\to k(-a)\to0$. Since $X\in\CM^{\Z}R$, we can regard $X\subset K$ and hence $R_a\subsetneq X_a\subseteq K_a$.

If $R_{\ge a+1}\neq K_{\ge a+1}$, then $K/R$ has $k(-i)$ as a simple submodule for some $i\ge a+1$. Thus there is a non-split short exact sequence $0\to R\to X\to k(-i)\to0$, and hence $\Ext^1_R(k,R)_i=\Ext^1_{\mod^{\Z}R}(k(-i),R)_0\neq0$, a contradiction.

(b) The multiplication map $r:K(i)\to K(i+p)$ is an isomorphism.

(c) The assertion follows from (a) and (b).
\end{proof}

Now we show the following easy observations.

\begin{proposition}\label{basic properties of K2}
\begin{enumerate}[\rm(a)]
\item $K$ is an injective object in $\mod^{\Z}K$.
\item $K(i)_{\ge0}$ is an injective object in $\mod^{\ge0}R$ for any $i\in\Z$
\end{enumerate}
\end{proposition}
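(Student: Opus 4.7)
The plan is to reduce (b) to (a) by chaining the two adjunctions supplied by Lemma \ref{basic}, and to establish (a) by identifying $K$ as a zero-dimensional graded Gorenstein ring.

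For part (a), I would first observe that $K$ is $\Z$-graded Artinian Gorenstein. By Lemma \ref{basic properties of K}(d), $K=R[r^{-1}]$ is a localization of $R$, so $K$ is noetherian and Gorenstein. Moreover, every graded prime of $K$ corresponds to a graded prime $\mathfrak{p}$ of $R$ with $r\notin\mathfrak{p}$, and as in the proof of Lemma \ref{basic properties of K}(d) such a $\mathfrak{p}$ must be minimal; hence $\dim K=0$. A $\Z$-graded Artinian Gorenstein ring is graded self-injective: one decomposes $K\cong\prod_j K^j$ into its graded-local Artinian Gorenstein factors, and each $K^j$ has simple socle (generated by a single homogeneous element) and is therefore the graded injective envelope of its residue field, equivalently $K^j\cong D(K^j)(a_j)$ via graded Matlis duality with $a_j$ the $a$-invariant of $K^j$. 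Consequently $K$ is injective in $\mod^{\Z}K$.

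For part (b), I would chain the two adjunctions. Since $K$ is $R$-flat (being a localization), the functor $K\otimes_R-\colon\Mod^{\Z}R\to\Mod^{\Z}K$ is exact, so its right adjoint (restriction of scalars) preserves injective objects; combined with (a), this shows that $K(i)$ is injective in $\Mod^{\Z}R$. Next, the inclusion $\Mod^{\ge0}R\hookrightarrow\Mod^{\Z}R$ of Lemma \ref{basic}(a) is exact with right adjoint $(-)_{\ge0}$, so $(-)_{\ge0}$ also preserves injectives; applying it to $K(i)$ shows that $K(i)_{\ge0}$ is injective in $\Mod^{\ge0}R$. Since $K(i)_{\ge0}$ is finitely generated by Lemma \ref{basic properties of K}(c), a standard argument (morphisms in the full subcategory $\mod^{\ge0}R$ agree with morphisms in $\Mod^{\ge0}R$) promotes this to injectivity in $\mod^{\ge0}R$.

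The main obstacle is (a), i.e., the graded self-injectivity of the zero-dimensional $\Z$-graded Gorenstein ring $K$. In the ungraded setting this equivalence of Artinian Gorenstein with self-injectivity is classical, but the $\Z$-graded version requires either the decomposition into graded-local components together with graded Matlis duality, or a direct verification that $\Ext^1_{\mod^{\Z}K}(S,K)=0$ on all graded simple $K$-modules $S$. Everything else in the argument is a formal consequence of the adjunctions in Lemma \ref{basic} together with the flatness of localization.
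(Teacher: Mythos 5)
Your proof of (b) mirrors the paper's reasoning: both reduce injectivity of $K(i)_{\ge0}$ in $\mod^{\ge0}R$ to injectivity of $K(i)$ in $\mod^{\Z}K$ by chaining the two adjunctions of Lemma \ref{basic} and exploiting flatness of $K$ over $R$. The paper writes the composite directly, namely $\Hom^{\Z}_R(-,K(i)_{\ge0})=\Hom^{\Z}_K(K\otimes_R-,K(i))$ as functors on $\mod^{\ge0}R$, and observes this is exact; you instead apply the injective-preservation principle twice and then pass from $\Mod$ to $\mod$, but this is the same idea.

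For (a), your route is genuinely different and this is where the comparison is interesting. The paper takes an arbitrary $X\in\mod^{\Z}K$, observes that $X_{\ge0}$ is a finitely generated torsion-free $R$-module, hence (using $\dim R=1$) lies in $\CM^{\Z}R$, so $\Ext^1_R(X_{\ge0},R)=0$; then $\Ext^1_K(X,K)\simeq K\otimes_R\Ext^1_R(X_{\ge0},R)=0$ by flat base change together with Lemma \ref{basic}(c). This is short and uses nothing beyond the definition of $\CM R$ over a Gorenstein ring in dimension one. Your proposed route instead classifies $K$ and invokes graded Matlis duality, which is more conceptual but also has a factual slip: $K$ is \emph{not} Artinian as an ungraded ring and $\dim K$ need not be $0$. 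For instance $R=k[t]$ gives $K=k[t^{\pm1}]$, which has Krull dimension $1$. What is actually true is that every \emph{graded} prime of $K$ is minimal (this is what the proof of Lemma \ref{basic properties of K}(d) shows), so $K$ has $*$-dimension $0$. The decomposition into graded-local factors and the duality $K^j\simeq D(K^j)(a_j)$ that you want can still be set up at this level, but the residue objects are graded fields of the form $k'[t^{\pm d}]$ rather than fields, and one must check that the $k$-dual $D(K^j)$ really is the graded injective hull of that graded field and that the Gorenstein condition forces $K^j$ to coincide with it up to shift. So your approach is salvageable, but it needs more care than stated, whereas the paper's argument sidesteps all of this with the $\CM$-vanishing trick.
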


\begin{proof}
(a) 
Let $X\in\mod^{\Z}K$.
Then we have $X_{\ge 0}\in\mod^{\Z}R$ by Lemma \ref{basic properties of K}(c).
Since $\dim R=1$, we have $X_{\ge0}\in\CM^{\Z}R$.
Thus $\Ext^{1}_{K}(X,K) \simeq K\otimes_R\Ext^{1}_{R}(X_{\ge 0},R)=K\otimes_R0=0$ by Lemma \ref{basic}(c).

(b) We have isomorphisms of functors on $\mod^{\ge0}R$:
\[\Hom^{\Z}_R(-,K(i)_{\ge0})\stackrel{{\rm Lem. \ref{basic}(a)}}{=}
\Hom^{\Z}_R(-,K(i))\stackrel{{\rm Lem. \ref{basic}(b)}}{=}
\Hom^{\Z}_K(K\otimes_R-,K(i)).\]
This is an exact functor since $K$ is a flat $R$-module and $K(i)$ is an injective object in $\mod^{\Z}K$ by (a).
Thus $K(i)_{\ge0}$ is injective in $\mod^{\ge0}R$.
\end{proof}

Using an isomorphism $\Ext^1_R(-,R(a))\simeq D$ of functors on $\mod^{\Z}_0R\to\mod_0^{\Z}R$, we show the following key observations.

\begin{lemma}\label{vanishing}
\begin{enumerate}[\rm(a)]
\item For all integers $i,j\in\Z$ satisfying $j<i$ and $j\le a$, we have
\[\Hom^{\Z}_R(R(i)_{\ge0},R(j)_{\ge0})=
\Hom^{\Z}_R(R(i)_{\ge0},R(j))=0.\]
\item 
Assume $a\ge0$.
For all $i>0$ and $X\in\CM^{\ge0}R$, we have $\underline{\Hom}^{\Z}_R(R(i)_{\ge0},X)=\Hom^{\Z}_R(R(i)_{\ge0},X)$.
\end{enumerate}
\end{lemma}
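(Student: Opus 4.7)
For part (a), the plan is to exploit the short exact sequence
\[
0 \to R(i)_{\ge 0} \to R(i) \to C \to 0, \qquad C := R(i)/R(i)_{\ge 0},
\]
where $C$ is a finite length $\Z$-graded module (its $n$-th component is $R_{i+n}$ for $-i \le n \le -1$ and vanishes elsewhere). Applying $\Hom^{\Z}_R(-,R(j))$ yields a long exact sequence whose outer terms I want to kill. First, $\Hom^{\Z}_R(C,R(j))=0$ since $R$ has positive depth and $C$ is of finite length, so $C$ cannot embed into any graded shift of $R$. Second, $\Hom^{\Z}_R(R(i),R(j)) = R_{j-i}$, which vanishes by the hypothesis $j<i$. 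Third, $\Ext^1_{\mod^{\Z}R}(R(i),R(j))=0$ because $R(i)$ is projective. It remains to control $\Ext^1_{\mod^{\Z}R}(C,R(j))$.

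The key calculation is that for any finite-length $\Z$-graded $R$-module $N$, the defining property of the $a$-invariant together with graded local duality gives a functorial isomorphism $\Ext^1_R(N,R) \simeq D(N)(-a)$, so its $j$-th graded piece equals $D(N_{a-j})$. Taking $N = C$ (concentrated in negative degrees) shows $D(C)(-a)$ lives in degrees $\ge a+1$, hence $\Ext^1_{\mod^{\Z}R}(C,R(j))=\Ext^1_R(C,R)_j=0$ whenever $j\le a$. Combining everything in the long exact sequence gives $\Hom^{\Z}_R(R(i)_{\ge 0},R(j))=0$. The second equality in (a) then follows from Lemma \ref{basic}(a): since $R(i)_{\ge 0}\in\mod^{\ge 0}R$, every graded homomorphism $R(i)_{\ge 0}\to R(j)$ has image in $R(j)_{\ge 0}$, so $\Hom^{\Z}_R(R(i)_{\ge 0},R(j)_{\ge 0})=\Hom^{\Z}_R(R(i)_{\ge 0},R(j))$.

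For part (b), I reduce the stable hom calculation to a case-split on indecomposable projective summands. Any $f\colon R(i)_{\ge 0}\to X$ that factors through some projective in $\mod^{\Z}R$ factors through a finite direct sum of shifts $R(s)$, so it suffices to prove that every composite $R(i)_{\ge 0}\xrightarrow{g} R(s)\xrightarrow{h} X$ is zero. If $s>0$, then $\Hom^{\Z}_R(R(s),X)=X_{-s}=0$ because $X\in\CM^{\ge 0}R$ and $-s<0$, so $h=0$. If $s\le 0$, then $s\le 0\le a$ and $s\le 0<i$, so part (a) applies to yield $\Hom^{\Z}_R(R(i)_{\ge 0},R(s))=0$, hence $g=0$. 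Either way the composite vanishes, which gives $\underline{\Hom}^{\Z}_R(R(i)_{\ge 0},X)=\Hom^{\Z}_R(R(i)_{\ge 0},X)$.

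The main technical obstacle is the vanishing of $\Ext^1_{\mod^{\Z}R}(C,R(j))$ for $j\le a$, which is where the $a$-invariant enters in an essential way. Rather than quoting graded local duality wholesale, I plan to derive the needed identity $\Ext^1_R(N,R)\simeq D(N)(-a)$ by induction on the length of $N$: the base case is the definition $\Ext^1_R(k,R)\simeq k(-a)$, the inductive step follows from a short exact sequence $0\to N'\to N\to k(\ell)\to 0$ by applying the five-lemma to the long exact $\Ext$-sequence (using $\Hom^{\Z}_R(\text{f.l.},R)=0$), and graded degrees are tracked carefully throughout. Everything else in the argument is formal diagram-chasing.
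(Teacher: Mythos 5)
Your proof is correct and follows essentially the same route as the paper's: the short exact sequence $0\to R(i)_{\ge0}\to R(i)\to C\to 0$, the vanishing $\Hom^{\Z}_R(R(i),R(j))=R_{j-i}=0$, and reduction of the remaining obstruction to the duality $\Ext^1_R(-,R(a))\simeq D(-)$ on finite-length graded modules, which forces $\Ext^1_R(C,R(j))_0=D(C_{a-j})=0$ because $C$ lives in negative degrees and $a-j\ge0$. Your part (b) is also the same case split the paper uses ($s>0$ versus $s\le0$). The only genuine deviation is that the paper simply invokes the graded Matlis/local duality isomorphism $\Ext^1_R(-,R(a))\simeq D$, whereas you propose to reprove it by induction on length starting from the defining property $\Ext^1_R(k,R(a))\simeq k$; that is a sound and self-contained alternative (for the application only a graded dimension count is needed, so naturality concerns don't bite), at the cost of some extra bookkeeping. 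The extra observations you include, $\Hom^{\Z}_R(C,R(j))=0$ and $\Ext^1_R(R(i),R(j))=0$, are true but not needed: the two vanishings of the outer terms in the segment $\Hom^{\Z}_R(R(i),R(j))\to\Hom^{\Z}_R(R(i)_{\ge0},R(j))\to\Ext^1_R(C,R(j))_0$ already give the conclusion.
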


\begin{proof}
(a) The first equality follows from Lemma \ref{basic}(a).

We show the second equality.
Consider an exact sequence $0\to R(i)_{\ge0}\to R(i)\to M\to 0$ with $M:=R(i)/R(i)_{\ge0}\in\mod^{<0}R$.
Applying $\Hom_R(-,R(j))$, we have an exact sequence
\[\Hom^{\Z}_R(R(i),R(j))\to
\Hom^{\Z}_R(R(i)_{\ge0},R(j))\to
\Ext^1_R(M,R(j))_0.\]
Since $j<i$, we have $\Hom^{\Z}_R(R(i),R(j))=R_{j-i}=0$. Moreover
\[\Ext^1_R(M,R(j))_0=\Ext^1_R(M,R(a))_{j-a}=(DM)_{j-a}=0\]
holds by $DM\in\mod^{>0}R$ and $j-a\le0$.
Thus $\Hom^{\Z}_R(R(i)_{\ge0},R(j))=0$ holds.

(b) Clearly $\Hom^{\Z}_R(R(j),X)=0$ for any $j>0$, and $\Hom^{\Z}_R(R(i)_{\ge0},R(j))=0$ holds for any $j\le 0$ by (a).
Thus the assertion follows.
\end{proof}

For $X\in\mod R$, let ${\rm NP}(X):=\{\pp\in\Spec R\mid X_\pp\notin\proj R_\pp\}$ be the non-projective locus of $X$. Clearly ${\rm NP}(X)=\Supp\Ext^1_R(X,\Omega X)$ holds.

\begin{lemma}\label{NP}
For $X\in\mod^{\Z}R$, each minimal element in ${\rm NP}(X)$ is $\Z$-graded. In particular, $X\in\proj R$ if and only if $X_\pp\in\proj R_\pp$ for each $\Z$-graded prime ideal $\pp$ of $R$.
\end{lemma}

\begin{proof}
For $\pp\in\Spec R$, we denote by $\pp^*\in\Spec R$ the ideal generated by all homogeneous elements in $\pp$. Since $E:=\Ext^1_R(X,\Omega X)$ is a $\Z$-graded $R$-module, $\pp\in\Supp E$ if and only if $\pp^*\in\Supp E$ \cite[1.5.6]{BH}. Thus each minimal element $\pp\in{\rm NP}(X)$ satisfies $\pp=\pp^*$.
\end{proof}

We give the following description of the category $\CM_0^{\Z}R$ in \eqref{CM0}.

\begin{proposition}\label{CM0=CM02}
$\CM_0^{\Z}R=\{X\in\CM^{\Z}R\mid K\otimes_RX\in\proj K\}$.
\end{proposition}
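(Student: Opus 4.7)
The plan is to reduce the containment $\CM_0^{\Z} R = \{X \in \CM^{\Z} R \mid K \otimes_R X \in \proj K\}$ to a local/global comparison, using that, under (R1) and (R2), the total quotient ring $K$ has Krull dimension zero and its primes correspond bijectively to the minimal primes of $R$. By Lemma \ref{basic properties of K}(d) we have $K = R[r^{-1}]$ for some homogeneous non-zero-divisor $r$ of positive degree, so $\Spec K$ corresponds to the primes $\pp$ of $R$ avoiding $r$; since $\dim R = 1$ and the unique maximal graded ideal $R_{>0}$ contains $r$, all primes of $K$ come from non-maximal, i.e.\ minimal, primes of $R$. Conversely, every minimal prime $\pp$ of $R$ consists of zero-divisors (as $R$ is Cohen-Macaulay), hence $\pp \cap S = \emptyset$, so $\pp$ does give a prime $\qq := \pp K$ of $K$, and standard localization yields $K_{\qq} \simeq R_{\pp}$.

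With this dictionary in hand, the key is the identification $(K \otimes_R X)_\qq = X_\pp$ for every such pair $\pp \leftrightarrow \qq$. First I would record that since $\dim R = 1$, the condition $\dim R_\pp < \dim R$ in the definition of $\CM_0 R$ singles out exactly the minimal primes of $R$, so
\[
\CM_0^{\Z} R = \{X \in \CM^{\Z} R \mid X_\pp \in \proj R_\pp \text{ for every minimal prime } \pp \text{ of } R\}.
\]
For the inclusion $(\subseteq)$, given $X \in \CM_0^{\Z} R$ and an arbitrary prime $\qq$ of $K$, let $\pp = \qq \cap R$ (a minimal prime by the bijection above). Then $(K \otimes_R X)_\qq \simeq X_\pp$ is projective over $R_\pp = K_\qq$; since this holds at every prime of the Noetherian ring $K$ and $K \otimes_R X$ is finitely generated, it is projective over $K$. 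For the inclusion $(\supseteq)$, the same identification gives $X_\pp \simeq (K \otimes_R X)_{\pp K}$, which is projective over $R_\pp$ by hypothesis.

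The main (minor) obstacle is matching the graded and ungraded notions of projectivity over $K$, since a priori $\proj K$ in this context means graded projective. This is handled by observing that $K$ is zero-dimensional Noetherian, hence a finite product of Artinian local rings (in fact $K \simeq K^1 \times \cdots \times K^m$ as in the discussion preceding Corollary \ref{grothendieck}), so a finitely generated graded $K$-module is graded-projective iff it is ungraded-projective iff its localization at each prime of $K$ is projective. Combined with the bijection $\pp \leftrightarrow \pp K$ and the equality $K_\qq = R_\pp$, this gives both implications at once. No other delicate input is required beyond Lemma \ref{basic properties of K}(d) and the Cohen-Macaulay property of $R$ (to ensure minimal primes consist of zero-divisors and are graded, so they are precisely the contractions of primes of $K$).
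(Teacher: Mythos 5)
There is a genuine gap in the $(\subseteq)$ direction. Your argument rests on the claim that $K$ has Krull dimension zero, so that every prime of $K$ contracts to a minimal prime of $R$. This is false. Here $K$ is the \emph{$\Z$-graded} total quotient ring, obtained by inverting only the homogeneous non-zero-divisors, and by Lemma \ref{basic properties of K}(d) one has $K = R[r^{-1}]$ for a single homogeneous non-zero-divisor $r$ of positive degree. Thus $\Spec K = \Spec R \setminus V(r)$, which is in general still one-dimensional. Already for $R = k[t]$ one gets $K = k[t^{\pm1}]$, a one-dimensional domain whose maximal ideals $(t-a)$ with $a\in k^*$ contract to the height-one, non-graded primes $(t-a)\subset R$; likewise the decomposition $K = K^1\times\cdots\times K^m$ in the paper is into ring-indecomposable factors, not Artinian local ones. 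Consequently the step ``let $\pp=\qq\cap R$, a minimal prime by the bijection above'' fails for arbitrary $\qq\in\Spec K$: the bijection in Lemma \ref{basic properties of K}(d) is only between \emph{$\Z$-graded} primes of $R$ avoiding $r$ and \emph{$\Z$-graded} primes of $K$. This breaks exactly where you need it: if $\qq$ is non-graded, then $\pp=\qq\cap R$ can be a height-one maximal ideal, and the hypothesis $X\in\CM_0^{\Z}R$ gives no control on $X_\pp$ there.

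The paper closes this hole with Lemma \ref{NP}: the non-projective locus of the graded module $K\otimes_R X$ is cut out by a graded ideal, so if it is nonempty it contains a graded prime $\qq$, and that one does contract to a minimal, graded prime of $R$. You can repair your argument in a parallel but self-contained way without invoking Lemma \ref{NP}: for any $\pp\in\Spec R$ with $r\notin\pp$, the graded prime $\pp^*$ generated by the homogeneous elements of $\pp$ also avoids $r$, hence $\pp^*\neq R_{>0}$, hence $\pp^*$ is minimal (as $\dim R=1$ and $R_{>0}$ is the only graded prime of height one); then $X_{\pp^*}$ is free by the definition of $\CM_0^{\Z}R$, and $X_\pp$ is a further localization of $X_{\pp^*}$, hence free. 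Either route needs a genuine graded-ideal input; the zero-dimensionality shortcut is simply not available. (Your $(\supseteq)$ direction, which only uses that $R_\pp$ is a localization of $K$ for $\pp$ minimal, is correct and matches the paper.)
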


\begin{proof}
Since $\dim R=1$, $X\in\CM^{\Z}R$ belongs to $\CM_0^{\Z}R$ if and only if $X_\pp\in\proj R_\pp$ for each minimal prime ideal $\pp$ of $R$. Applying Lemma 4.14 to $K\otimes_RX\in\mod^{\Z}K$, this is equivalent to $K\otimes_RX\in\proj K$ since $\Z$-graded prime ideals of $K$ correspond bijectively to minimal prime ideals of $R$.
\end{proof}

\subsection{Proofs of Theorem \ref{general tilting for qgr} and Corollaries \ref{grothendieck} and \ref{abundance of CM}}

Theorem \ref{general tilting for qgr} follows easily from the following standard observations.

\begin{proposition}\label{K and Lambda}
\begin{enumerate}[\rm(a)]
\item $P=\bigoplus_{i=1}^pK(i)$ is a progenerator of $\mod^{\Z}K$ such that $\End^{\Z}_R(P)\simeq\Lambda$.
\item There is an equivalence $\Hom^{\Z}_R(P,-):\mod^{\Z}K\simeq\mod\Lambda$.
\item $U=\bigoplus_{i=1}^pK(i)_{\ge0}$ is a progenerator in $\qgr R$. Therefore $U$ is a tilting object in $\per(\qgr R)$.
\item $\Lambda$ is a finite dimensional self-injective $k$-algebra.
\item If $R$ is reduced, then $\Lambda$ is a semisimple $k$-algebra. Otherwise $\Lambda$ has infinite global dimension.
\end{enumerate}
\end{proposition}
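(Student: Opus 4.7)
The plan is to prove (a) by a direct hom-space computation, deduce (b) by Morita theory, establish (c) by identifying $\qgr R$ with $\mod^{\Z}K$ via the localization functor $K\otimes_R-$, and then obtain (d) and (e) by transporting the structure of $K$ to $\Lambda$. For (a), I would compute $\Hom^{\Z}_R(K(i),K(j))$ directly: since the localization map $R\to K=R[r^{-1}]$ is an epimorphism of rings, every $\Z$-graded $R$-linear map between $\Z$-graded $K$-modules is automatically $K$-linear. As $K(i)$ is a rank-one free $\Z$-graded $K$-module generated in degree $-i$, such a degree-zero map $K(i)\to K(j)$ is determined by the image of the generator and identifies canonically with $K(j)_{-i}=K_{j-i}$. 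Arranging these for $1\le i,j\le p$ produces the matrix \eqref{lambda}. That $P$ is a progenerator of $\mod^{\Z}K$ is elementary: each $K(i)$ is projective as a shift of the regular module, Lemma \ref{basic properties of K}(b) gives $K(n)\simeq K(n+p)$, so every shift is isomorphic to one of the summands of $P$; thus $\add P=\proj^{\Z}K$, and $P$ generates. Part (b) is then the Morita theorem applied to the progenerator $P$, using once more that $\Hom^{\Z}_R(P,-)$ and $\Hom^{\Z}_K(P,-)$ agree on $\mod^{\Z}K$.

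The main step is (c), for which I would establish an equivalence of abelian categories
\[\qgr R\xrightarrow{K\otimes_R-}\mod^{\Z}K\]
identifying $K(i)_{\ge0}$ with $K(i)$ by Lemma \ref{basic}(c), and hence sending $U$ to $P$; together with (a)--(b) this makes $U$ a progenerator of $\qgr R$. The functor $K\otimes_R-$ is exact by flatness of $K$ and kills $\mod_0^{\Z}R$: every finite-length graded $R$-module is supported only at the unique graded maximal ideal $R_{>0}$ (since $R_0$ is a field), hence annihilated by some $r^N$, forcing $K\otimes_R M=0$. So it descends to a functor $\qgr R\to\mod^{\Z}K$. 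Essential surjectivity is Lemma \ref{basic}(c). The technical point, where I expect the only real work, is full faithfulness; I would prove it by the standard Serre-quotient calculation, identifying both $\Hom_{\qgr R}(X,Y)$ and $\Hom^{\Z}_K(K\otimes_R X,K\otimes_R Y)$ with the colimit $\varinjlim_n\Hom^{\Z}_R(X,(Y/Y_{\mathrm{tors}})(np))$, using finite generation of $X$ together with the description $K\otimes_R Y=(Y/Y_{\mathrm{tors}})[r^{-1}]=\bigcup_n r^{-n}(Y/Y_{\mathrm{tors}})$. Once $U$ is a progenerator of $\qgr R$, it is a tilting object of $\per(\qgr R)$ because its positive self-extensions vanish in $\qgr R$ and $\add U$ coincides with the image of $\proj^{\Z}R$ in $\qgr R$, which thickly generates $\per(\qgr R)$ by definition.

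Parts (d) and (e) then follow formally. For (d), self-injectivity of $\Lambda$ corresponds under Morita to injectivity of $P$ in $\mod^{\Z}K$, which holds by Proposition \ref{basic properties of K2}(a) since $P$ is a finite direct sum of shifts of the injective $K$; finite-dimensionality of $\Lambda$ over $k$ reduces to that of each $K_n$, which is clear from $K_n=R_n$ for $n>a$ (Lemma \ref{basic properties of K}(a)) together with the periodicity in Lemma \ref{basic properties of K}(b). For (e), if $R$ is reduced then so is $K$ (any nonzero element of $R$ killed in $K$ would be annihilated by a non-zero-divisor, hence be zero already), and a reduced Artinian ring is a product of fields, hence semisimple; thus $\mod^{\Z}K$ is semisimple and $\Lambda$ inherits this. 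If $R$ is non-reduced, the same survival argument shows $K$ is a non-semisimple Artinian self-injective ring, and any such ring has infinite global dimension, which transfers to $\Lambda$ by Morita invariance.
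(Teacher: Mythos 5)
Your proposal follows essentially the same route as the paper throughout: direct hom-space computation together with the period-$p$ shift isomorphism for (a), Morita theory for (b), the equivalence $K\otimes_R-:\qgr R\simeq\mod^{\Z}K$ for (c), injectivity of $P$ via Proposition \ref{basic properties of K2}(a) for (d), and reducing (e) to properties of $K$ plus the Eilenberg--Nakayama dichotomy for self-injective algebras. The details you fill in for (a) and (c) (the ring-epimorphism point, killing $\mod_0^{\Z}R$, the colimit description of Serre-quotient hom spaces) are all sound.

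One slip in (e): $K$ is not Artinian, so ``a reduced Artinian ring is a product of fields'' does not literally apply to it. Already for $R=k[t]$ one has $K=k[t^{\pm1}]$, which has Krull dimension one and infinite length over itself. What does hold — and what the paper uses, via [BH, Lemma 1.5.7] — is that $K$ reduced forces $K$ to be a finite product of Laurent polynomial rings $k^i[t_i^{\pm1}]$ over field extensions $k^i$ of $k$, i.e., a product of \emph{graded} fields; hence every object of $\mod^{\Z}K$ is graded-projective and $\gl(\mod^{\Z}K)=0$, which under your Morita equivalence says $\Lambda$ is semisimple. The non-reduced case then goes through once ``$K$ is Artinian'' is dropped: it is $\Lambda$ (finite-dimensional by your argument in (d), self-injective, non-semisimple) to which Eilenberg--Nakayama is applied.
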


\begin{proof}
(a) Since $\{K(i)\mid i\in\Z\}$ is a progenerator of $\mod^{\Z}K$ and $K(i+p)\simeq K(i)$ holds for any $i\in\Z$, $P$ is a progenerator.
Since $\End_R(P)=\End_{K}(P)$, we have $\End^{\Z}_R(P)=\End^{\Z}_{K}(P)\simeq\Lambda$.

(b) Immediate from (a) and Morita theory.

(c) Consider the functors $(-)_{\ge0}:\mod^{\Z}K\to\mod^{\Z}R$ and $K\otimes_R-:\mod^{\Z}R\to\mod^{\Z}K$.
One can check that they induce mutually quasi-inverse equivalences $\mod^{\Z}K\simeq\qgr R$ (e.g.\ \cite[Proposition 6.21]{HIO}).
Since $P\in\mod^{\Z}K$ corresponds to $U\in\qgr R$, $U$ is a progenerator in $\qgr R$ by (a).

(d) Since $P$ is injective in $\mod^{\Z}K$ by Proposition \ref{basic properties of K2}(a), $\Lambda$ is injective in $\mod\Lambda$ by (b).

(e) $R$ is reduced if and only if $K$ is reduced if and only if any homogeneous element of $K$ is 
invertible. This is equivalent to that any object in $\mod^{\Z}K$ is projective, that is, $\gl(\mod^{\Z}K)=0$.
By (b), this is equivalent to $\Lambda$ is semisimple.

On the other hand, by a classical result of Eilenberg and Nakayama, a self-injective algebra is either semisimple or of infinite global dimension. Thus the last assertion follows from (d).
\end{proof}

We give another proof of Theorem \ref{general tilting for qgr}(a) by using Theorem \ref{embedding2}.
Note that $U$ can be written as
\[U=\bigoplus_{i=a+1}^{a+p}K(i)_{\ge 0}=
\bigoplus_{i=a+1}^{a+p}R(i)_{\ge 0}\in\DDD^{\bo}(\mod^{\Z}R).\]
Theorem \ref{general tilting for qgr}(a) is a direct consequence of the following result.

\begin{proposition}\label{W^1}
\begin{enumerate}[\rm(a)]
\item $U$ belongs to $\UU:=\DD_R^{\ge0}\cap(\DD_R^{>a})^*$.
\item $U$ is a tilting object in $\UU\simeq\per(\qgr R)$.
\end{enumerate}
\end{proposition}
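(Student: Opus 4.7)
The plan is to deduce the proposition from Theorem~\ref{embedding2}(b) and Proposition~\ref{K and Lambda}(c). Applying Theorem~\ref{embedding2}(b) with $A=R$ and $\ell=0$, the Verdier quotient functor restricts to a triangle equivalence
\[
\UU=\DD_R^{\ge 0}\cap(\DD_R^{>a})^*\ \xrightarrow{\sim}\ \DD_R/\DDD^{\bo}(\mod_0^{\Z}R),
\]
and the target is identified with $\per(\qgr R)$ via the standard equivalence $\DDD^{\bo}(\mod^{\Z}R)/\DDD^{\bo}(\mod_0^{\Z}R)\simeq\DDD^{\bo}(\qgr R)$, since $\DD_R=\thick\{\proj^{\Z}R,\mod_0^{\Z}R\}$ maps onto the thick hull of $\proj^{\Z}R$ in $\DDD^{\bo}(\qgr R)$, which by definition is $\per(\qgr R)$. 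Thus I obtain a triangle equivalence $\UU\simeq\per(\qgr R)$, and the proposition reduces to (a) $U\in\UU$ and (b) matching $U$ with the tilting object of Proposition~\ref{K and Lambda}(c) under this equivalence.

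For (a), fix $i\in\{a+1,\ldots,a+p\}$. The containment $R(i)_{\ge 0}\in\DD_R^{\ge 0}$ follows from the short exact sequence
\[
0\to R(i)_{\ge 0}\to R(i)\to R(i)/R(i)_{\ge 0}\to 0,
\]
whose middle term is projective and whose quotient is of finite length, together with $R(i)_{\ge 0}\in\mod^{\ge 0}R$ by construction. For the dual condition $R(i)_{\ge 0}\in(\DD_R^{>a})^*$, apply $(-)^*=\RHom_R(-,R)$ to the same sequence to obtain a triangle
\[
(R(i)/R(i)_{\ge 0})^*\to R(-i)\to (R(i)_{\ge 0})^*\to (R(i)/R(i)_{\ge 0})^*[1].
\]
Since $i>a$, $R(-i)\in\mod^{\ge i}R\subseteq\mod^{>a}R$. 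When $i\ge 1$, the finite length module $M:=R(i)/R(i)_{\ge 0}$ is supported in degrees $[-i,-1]$, and the $a$-invariant duality $\Ext^1_R(-,R(a))\simeq D$ on $\mod_0^{\Z}R$ gives $M^*\simeq DM(-a)[-1]$, supported in degrees $[a+1,a+i]$, hence in $\mod^{>a}R$; when $i\le 0$ (possible only if $a<0$), one has $R(i)_{\ge 0}=R(i)$ and the dual is $R(-i)\in\mod^{>a}R$. In every case both outer terms of the triangle lie in $\DD_R^{>a}$, and so does $(R(i)_{\ge 0})^*$.

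For (b), under the equivalence $\UU\simeq\per(\qgr R)$ the object $U$ maps to its image in $\qgr R$. By Lemma~\ref{basic properties of K}(a), $R_{\ge i}=K_{\ge i}$ for every $i>a$, so $R(i)_{\ge 0}=K(i)_{\ge 0}$ for each $i\in\{a+1,\ldots,a+p\}$; by Lemma~\ref{basic properties of K}(b), $K(j+p)_{\ge 0}\simeq K(j)_{\ge 0}$, so this list matches $\{K(1)_{\ge 0},\ldots,K(p)_{\ge 0}\}$ up to isomorphism and reordering. Hence $U$ corresponds to the progenerator of Proposition~\ref{K and Lambda}(c), which is tilting in $\per(\qgr R)$, so $U$ is tilting in $\UU$. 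The main obstacle is the internal-degree bookkeeping in the dual computation for (a), specifically the verification that $M^*\simeq DM(-a)[-1]$ lands in $\mod^{>a}R$; once this is in hand, the rest is a formal consequence of Theorem~\ref{embedding2}(b) and the preceding results.
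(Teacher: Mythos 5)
Your proof is correct and uses the same key structural tool (Theorem~\ref{embedding2}(b) with $\ell=0$ to identify $\UU$ with $\DD_R/\DDD^{\bo}(\mod_0^{\Z}R)\simeq\per(\qgr R)$), but it differs from the paper in what is re-derived versus cited. For part~(a), the paper appeals directly to the pre-established Lemma~\ref{vanishing}(a): since $R(i)_{\ge0}\in\CM^{\Z}R$, one has $(R(i)_{\ge0})^*=\Hom_R(R(i)_{\ge0},R)$, and its degree-$j$ part $\Hom^{\Z}_R(R(i)_{\ge0},R(j))$ vanishes for $j\le a$, so the dual lands in $\DD_R^{>a}$ in one step. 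You instead apply $(-)^*$ to the short exact sequence $0\to R(i)_{\ge 0}\to R(i)\to M\to 0$ and trace degrees through the $a$-invariant duality $M^*\simeq DM(-a)[-1]$; this is a legitimate alternative, but it essentially re-proves Lemma~\ref{vanishing}(a) inline (the same duality $\Ext^1_R(-,R(a))\simeq D$ underlies both). For part~(b), the paper is deliberately self-contained: it verifies Hom-vanishing via Proposition~\ref{basic properties of K2}(b) and generation $\thick U=\per(\qgr R)$ via the fact that multiplication by the non-zero-divisor $r$ makes $R(i)\simeq R(i+p)$ in $\qgr R$, so that Proposition~\ref{W^1} serves as an independent ``second proof'' of Theorem~\ref{general tilting for qgr}(a). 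You instead import the tilting property from Proposition~\ref{K and Lambda}(c) (the Morita-theoretic progenerator argument) and transport it across the equivalence, after correctly matching $\bigoplus_{i=a+1}^{a+p}K(i)_{\ge0}$ with $\bigoplus_{i=1}^{p}K(i)_{\ge0}$ up to $p$-periodicity. This is not circular, since Proposition~\ref{K and Lambda} is proven independently of Theorem~\ref{embedding2}, but it does forgo the paper's goal of exhibiting a proof route through the semi-orthogonal decomposition alone. Your degree bookkeeping in (a), including the case split at $i\le 0$ when $a<0$, is accurate.
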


\begin{proof}
(a) Since $U\in\DD_R^{\ge0}$ holds clearly, we only have to show $U^*\in\DD_R^{>a}$.
Fix $i\ge a+1$. Since $R(i)_{\ge0}\in\CM^{\Z}R$, we have $(R(i)_{\ge0})^*=\Hom_R(R(i)_{\ge0},R)$.
Since $\Hom^{\Z}_R(R(i)_{\ge0},R(j))=0$ holds for any $j\le a$ by Lemma \ref{vanishing}(a), we have $(R(i)_{\ge0})^*\in \DD_R^{>a}$ as desired.

(b) $\UU\simeq\per(\qgr R)$ holds by Theorem \ref{embedding2}(b).
We have $\Hom_{\UU}(K(i)_{\ge0},K(j)_{\ge0}[\ell])=0$ for all $\ell\neq0$ by Proposition \ref{basic properties of K2}(b).
It remains to show $\UU=\thick U$, or equivalently, $\per(\qgr R)=\thick U$.
For all $i\in\Z$, the multiplication map $r:R(i)\to R(i+p)$ is an isomorphism in $\qgr R$ since $r$ is a non-zero-divisor and hence $R/rR$ is artinian.
For all $i$ with $a<i\le a+p$, $R(i)$ belongs to $\thick U$ since $R(i)\simeq R(i)_{\ge0}$ holds in $\qgr R$.
Thus $\per(\qgr R)=\thick(\proj^{\Z}R)=\thick U$ holds.
\end{proof}

Now we prove Corollaries \ref{grothendieck} and \ref{abundance of CM}.

\begin{proof}[Proof of Corollary \ref{grothendieck}]
(a) The isomorphism classes of indecomposable projective objects in $\qgr R$ are given by $K^i(j)$ with $1\le i\le m$ and $0\le j<p_i$. Thus their number is $\sum_{i=1}^mp_i$.

(b) This follows immediately from (a) and Corollary \ref{SOD} since the Grothendieck groups of $\KKK^{\bo}(\proj^{[\ell,\ell-a-1]}A)$ for $a<0$ and $\DDD^{\bo}(\mod^{[\ell-a,\ell-1]}A)$ for $a>0$ are $\Z^{|a|}$.
\end{proof}

\begin{proof}[Proof of Corollary \ref{abundance of CM}]
Let $k=A_0$ and $k[t]$ be a polynomial ring with $\deg t=a$.
Then $R=k[t]\otimes_kA$ is a $\Z$-graded ring satisfying (R1) and (R2), and the $a$-invariant of $R$ is $0$ by \cite[Corollary 3.6.14]{BH}.
By Corollary \ref{SOD} and Theorem \ref{general tilting for qgr}, we have a triangle equivalence $\underline{\CM}_0^{\Z}R\simeq\per(\qgr R)\simeq\KKK^{\bo}(\proj\Lambda)$ for $\Lambda$ in \eqref{lambda} with $p=a$.
Since $K=k[t^{\pm1}]\otimes_kA$, it is clear that there is an equivalence $\proj^{\Z/a\Z}A\simeq\proj\Lambda$ sending $A(i)$ to the projective $\Lambda$-module given by its $i$-th row (see \cite[Theorem 3.1]{IL}).
Thus $\underline{\CM}_0^{\Z}R\simeq\KKK^{\bo}(\proj\Lambda)\simeq\KKK^{\bo}(\proj^{\Z/a\Z}A)$.
\end{proof}

\subsection{Proofs of Theorem \ref{general tilting} and Corollary \ref{general tilting reduced}}

In this subsection, we assume that the $a$-invariant $a$ of $R$ is non-negative unless otherwise stated. Let
\[\VV:=\DD_R^{\ge-a}\cap(\DD_R^{>a})^*\supseteq\UU=\DD_R^{\ge0}\cap(\DD_R^{>a})^*.\]
Then we have 
\begin{eqnarray*}
\underline{\CM}^{\Z}_0R\stackrel{}{\simeq}\DD_R/\KKK^{\bo}(\proj^{\Z}R)\stackrel{{\rm Thm.\ref{embedding2}(a)}}{\simeq}\VV
\stackrel{{\rm Thm.\ref{embedding2}(c)}}{=}\UU\perp\DDD^{\bo}(\mod^{[-a,-1]}R).
\end{eqnarray*}
We define a subalgebra of the $a\times a$ matrix algebra ${\rm M}_a(R)$ by
\[R^a:=(R_{i-j})_{1\le i,j\le a}.\]

\begin{proposition}\label{W^2}
The category $\mod^{[-a,-1]}R$ is equivalent to $\mod R^a$ and has a progenerator $\bigoplus_{i=1}^a(R/R_{\ge i})(i)\in\mod^{[-a,-1]}R$.
Thus $\DDD^{\bo}(\mod^{[-a,-1]}R)$ has a tilting object\[W:=\bigoplus_{i=1}^a(R/R_{\ge i})(i)[-1]\in\DDD^{\bo}(\mod^{[-a,-1]}R).\]
\end{proposition}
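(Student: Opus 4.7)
My plan is to establish the equivalence $\mod^{[-a,-1]}R\simeq\mod R^a$ by exhibiting $M:=\bigoplus_{i=1}^a(R/R_{\ge i})(i)$ as a projective generator of $\mod^{[-a,-1]}R$ with endomorphism ring $R^a$, and then deduce the tilting statement formally.

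First I would verify that each $M_i:=(R/R_{\ge i})(i)$ really belongs to $\mod^{[-a,-1]}R$: the module $R/R_{\ge i}$ is concentrated in degrees $0,1,\ldots,i-1$, so after the shift by $i$ it is concentrated in degrees $-i,\ldots,-1\subseteq[-a,-1]$. Note also that $\mod^{[-a,-1]}R$ is an abelian subcategory of $\mod^{\Z}R$ because being supported in $[-a,-1]$ is preserved under subquotients of $\Z$-graded $R$-modules.

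The central computation is: for any $X\in\mod^{[-a,-1]}R$, there is a natural isomorphism
\[
\Hom^{\Z}_R(M_i,X)\;\xrightarrow{\;\sim\;}\;X_{-i},\qquad \varphi\mapsto\varphi(\overline{1}),
\]
where $\overline{1}\in(M_i)_{-i}$ is the canonical generator. Indeed, a graded homomorphism out of the cyclic module $M_i$ is determined by the image of $\overline{1}$, which must be an element of $X_{-i}$ annihilated by $R_{\ge i}$; but the product $R_{\ge i}\cdot X_{-i}$ lives in degrees $\ge 0$, which is automatically zero since $X$ is supported in $[-a,-1]$. So no annihilation condition is imposed. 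Since the functor $X\mapsto X_{-i}$ is exact and conservative on $\mod^{[-a,-1]}R$ (summing over $i$), $M_i$ is projective in $\mod^{[-a,-1]}R$ and $M$ is a projective generator. Applying the computation with $X=M_j$ gives $\Hom^{\Z}_R(M_i,M_j)=(M_j)_{-i}=(R/R_{\ge j})_{j-i}$, which equals $R_{j-i}$ for $i\le j$ and vanishes otherwise (using $j-i<j$). Assembling these as matrix entries of $\End^{\Z}_R(M)=\End^{\Z}_R(\bigoplus M_i)$, with position $(i,j)$ corresponding to $\Hom^{\Z}_R(M_j,M_i)=R_{i-j}$, we obtain $\End^{\Z}_R(M)\cong R^a=(R_{i-j})_{1\le i,j\le a}$.

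By Morita theory, the functor $\Hom^{\Z}_R(M,-):\mod^{[-a,-1]}R\to\mod R^a$ is an equivalence of abelian categories sending $M$ to $R^a$ itself. Passing to bounded derived categories yields $\DDD^{\bo}(\mod^{[-a,-1]}R)\simeq\DDD^{\bo}(\mod R^a)$, under which $M$ corresponds to the free module $R^a$, a tilting object of $\DDD^{\bo}(\mod R^a)$. Its shift $W=M[-1]$ is therefore a tilting object in $\DDD^{\bo}(\mod^{[-a,-1]}R)$, completing the proof. There is no real obstacle here beyond keeping the graded bookkeeping straight; the only point that deserves care is checking that $M_i$ is projective in the subcategory $\mod^{[-a,-1]}R$ (not in $\mod^{\Z}R$, where it certainly is not), and this is precisely what the degree-cutoff argument above delivers.
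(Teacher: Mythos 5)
Your proof is correct and follows essentially the same route as the paper: the paper directly writes down the equivalence $X\mapsto[X_{-1}\,\cdots\,X_{-a}]$ and observes that $\bigoplus_i(R/R_{\ge i})(i)$ is sent to $R^a$, whereas you verify the progenerator property and endomorphism ring first and then invoke Morita theory, but the resulting functor $\Hom^{\Z}_R(M,-)$ is exactly the paper's. Your degree-cutoff argument for projectivity of $M_i$ inside $\mod^{[-a,-1]}R$ is the right justification for what the paper leaves implicit.
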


\begin{proof}
We have an equivalence $\mod^{[-a,-1]}R\simeq\mod R^a$ sending $\bigoplus_{i=-a}^{-1}X_i$ to $[X_{-1}\ X_{-2}\ \cdots\ X_{-a}]$.
Since it sends $\bigoplus_{i=1}^a(R/R_{\ge i})(i)$ to $R^a$, the first assertion follows. The second assertion is an immediate consequence.
\end{proof}

We can glue the tilting objects $U\in\UU$ and $W\in\DDD^{\bo}(\mod^{[-a,-1]}R)$ as follows.

\begin{lemma}\label{glueing tilting objects}
$\VV=\UU\perp\DDD^{\bo}(\mod^{[-a,-1]}R)$ has a tilting object $U\oplus W$.
\end{lemma}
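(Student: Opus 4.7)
The plan is to verify the two defining properties of a tilting object for $U\oplus W$ in $\VV$, using as input the semi-orthogonal decomposition $\VV=\UU\perp\DDD^{\bo}(\mod^{[-a,-1]}R)$ supplied by Theorem \ref{embedding2}(c), together with the tilting objects $U\in\UU$ (Proposition \ref{W^1}) and $W\in\DDD^{\bo}(\mod^{[-a,-1]}R)$ (Proposition \ref{W^2}).

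Generation will be immediate from these ingredients: combining $\thick U=\UU$ with $\thick W=\DDD^{\bo}(\mod^{[-a,-1]}R)$ via the semi-orthogonal decomposition yields $\thick(U\oplus W)=\VV$. For the Hom-vanishing, I would split $\Hom_{\VV}(U\oplus W,(U\oplus W)[n])$ into four blocks. The $UU$- and $WW$-blocks vanish for $n\neq0$ by the tilting property of each summand. The $UW$-block vanishes for every $n\in\Z$ by semi-orthogonality, since $W[n]$ remains in $\DDD^{\bo}(\mod^{[-a,-1]}R)$ and $\Hom(\UU,\DDD^{\bo}(\mod^{[-a,-1]}R))=0$. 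The only non-trivial case is the $WU$-block
\[\Hom_{\VV}(W,U[n])=\bigoplus_{i=1}^{a}\bigoplus_{j=a+1}^{a+p}\Ext^{n+1}_{\mod^{\Z}R}((R/R_{\ge i})(i),R(j)_{\ge0}),\]
which must vanish for $n\neq0$. For $n\le-1$ this is clear by a support argument, since $(R/R_{\ge i})(i)$ lives in degrees $[-i,-1]$ while $R(j)_{\ge0}$ lives in degrees $\ge0$. For $n\ge1$, I would apply $\Hom_R(-,R(j)_{\ge0})$ to the short exact sequence $0\to R(i)_{\ge0}\to R(i)\to(R/R_{\ge i})(i)\to0$ and use projectivity of $R(i)$ to reduce the problem to showing $\Ext^n_{\mod^{\Z}R}(R(i)_{\ge0},R(j)_{\ge0})=0$ for all $n\ge1$.

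The main obstacle is this last vanishing, which I would handle by a change-of-category argument. Since $j>a$, Lemma \ref{basic properties of K}(a) gives $R(j)_{\ge0}=K(j)_{\ge0}$, and Proposition \ref{basic properties of K2}(b) then ensures that this target is injective in $\mod^{\ge0}R$. Because $R=R_{\ge0}$, any finitely generated module in $\mod^{\ge0}R$ admits a projective resolution whose terms are of the form $R(m)$ with $m\le0$; such modules belong to $\mod^{\ge0}R$ and remain projective in $\mod^{\Z}R$, so the same resolution computes $\Ext$ in both categories. Hence $\Ext^n_{\mod^{\Z}R}(R(i)_{\ge0},R(j)_{\ge0})=\Ext^n_{\mod^{\ge0}R}(R(i)_{\ge0},R(j)_{\ge0})=0$ for $n\ge1$, which closes the last gap and establishes the lemma.
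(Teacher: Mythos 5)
Your proof is correct and takes essentially the same route as the paper's: generation via $\thick U=\UU$ and $\thick W=\DDD^{\bo}(\mod^{[-a,-1]}R)$, the $UW$-block killed by semi-orthogonality, and the remaining $WU$-block reduced by dimension shift along $0\to R(i)_{\ge0}\to R(i)\to(R/R_{\ge i})(i)\to0$ and then killed using $R(j)_{\ge0}=K(j)_{\ge0}$ and its injectivity in $\mod^{\ge0}R$ (Proposition \ref{basic properties of K2}(b)). Your extra remark that $\Ext$ in $\mod^{\Z}R$ agrees with $\Ext$ in $\mod^{\ge0}R$ is a small but worthwhile clarification that the paper leaves implicit.
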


\begin{proof}
Clearly $\UU=\thick U$ and
$\DDD^{\bo}(\mod^{[-a,-1]}R)=\thick W$ imply $\VV=\thick(U\oplus W)$.

By Propositions \ref{W^1} and \ref{W^2}, we have $\Hom_{\VV}(W,W[\ell])=0$ and $\Hom_{\VV}(U,U[\ell])=0$ for all $\ell\neq0$.
Since $\VV=\UU\perp\DDD^{\bo}(\mod^{[-a,-1]}R)$, we have $\Hom_{\VV}(U,W[\ell])=0$ for all $\ell\in\Z$.

It remains to check $\Hom_{\VV}((R/R_{\ge j})(j)[-1],K(i)_{\ge0}[\ell])=0$ for all $\ell\neq0$.
If $\ell<-1$, then this is clear since $(R/R_{\ge j})(j)$ and $K(i)_{\ge0}$ are modules.
If $\ell=-1$, then this follows from $(R/R_{\ge j})(j)\in\mod^{\Z}_0R$ and $K(i)_{\ge0}\in\CM^{\Z}R$.
Assume $\ell>0$. Since the syzygy of $(R/R_{\ge j})(j)$ is $R(j)_{\ge0}$, we have
\[\Hom_{\VV}((R/R_{\ge j})(j)[-1],K(i)_{\ge0}[\ell])=\Ext^\ell_R(R(j)_{\ge0},K(i)_{\ge0})_0\stackrel{{\rm Prop.\ref{basic properties of K2}(b)}}{=}0.\qedhere\]
\end{proof}

We are ready to prove Theorem \ref{general tilting}.

\begin{proof}[Proof of Theorem \ref{general tilting}]
(a) This follows from Lemma \ref{glueing tilting objects} since $V\simeq U\oplus W$ in $\underline{\CM}_0^{\Z}R$.

(b) Immediate from (a) and Proposition \ref{tilting functor}.

(c) The triangle equivalence $\VV\simeq\underline{\CM}_0^{\Z}R$ sends $\bigoplus_{i=1}^a(R/R_{\ge i})(i)[-1]$ to $\bigoplus_{i=1}^aR(i)_{\ge0}$. Thus
\[\underline{\End}^{\Z}_R(\bigoplus_{i=1}^aR(i)_{\ge0})
\simeq\End^{\Z}_R(\bigoplus_{i=1}^a(R/R_{\ge i})(i))=R^a.\]
Hence the left upper entries of \eqref{gamma} are correct.
The right upper entries are also correct since $\Hom_R^{\Z}(R(i)_{\ge0},R(j)_{\ge0})=0$ holds for all $j\le a<i$ by Lemma \ref{vanishing}(a).
Finally, the lower entries are correct since for all $a+1\le j\le a+p$, we have
\begin{eqnarray*}
\underline{\Hom}^{\Z}_R(R(i)_{\ge0},R(j)_{\ge0})
&\stackrel{{\rm Lem.\ref{vanishing}(b)}}{=}&\Hom^{\Z}_R(R(i)_{\ge0},R(j)_{\ge0})
\stackrel{{\rm Lem.\ref{basic properties of K}(a)}}{=}\Hom^{\Z}_R(R(i)_{\ge0},K(j)_{\ge0})\\
&\stackrel{{\rm Lem.\ref{basic}(a)}}{=}&\Hom^{\Z}_R(R(i)_{\ge0},K(j))
\stackrel{{\rm Lem.\ref{basic}(b)}}{=}\Hom^{\Z}_K(K(i),K(j))=K_{j-i}.
\end{eqnarray*}

(d) This follows from Theorem \ref{tilting functor for CM}(a).

(e) For the triangular matrix ring $A=\left[\begin{smallmatrix}B&0\\ M&C\end{smallmatrix}\right]$, it is well known that
\[\max\{\gl B,\gl C\}\le\gl A\le\gl B+\gl C+1\]
holds. Applying it repeatedly, we obtain $\gl R^a<\infty$. Since $\Gamma$ has a form
$\left[\begin{smallmatrix}
R^a&0\\
M&\Lambda
\end{smallmatrix}\right]$, it follows that $\gl\Gamma<\infty$ if and only if $\gl\Lambda<\infty$.
Thus the assertion follows from Theorem \ref{general tilting for qgr}(e).
\end{proof}

To prove Corollary \ref{general tilting reduced}, we prepare the following.

\begin{proposition}\label{negative and reduced imply regular}
Under the setting (R1) and (R2), if $a<0$ and $R$ is reduced, then $R\simeq k[t]$.
\end{proposition}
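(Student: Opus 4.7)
\medskip

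\noindent\textbf{Proof plan.}
The key input is Lemma \ref{basic properties of K}(a), which says $R_{>a}=K_{>a}$. Since we assume $a<0$ and $R=R_{\ge0}$, this already gives $R=R_{\ge0}=K_{\ge0}$; in particular $R_0=K_0=k$. So the problem reduces to determining the structure of the graded ring $K$ under the additional hypothesis that $R$ (equivalently $K$) is reduced. The second input is Proposition \ref{K and Lambda}(e), whose proof shows that $R$ is reduced if and only if every nonzero homogeneous element of $K$ is invertible, i.e.\ $K$ is a ``graded field''. The strategy is to use this to identify $K$ with a Laurent polynomial ring $k[u^{\pm1}]$, so that $R=K_{\ge0}=k[u]\simeq k[t]$.

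\medskip

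The argument breaks into three elementary steps. First, I would show that $K$ is ring-indecomposable. Any idempotent $e\in K$ decomposes as $e=\sum e_i$ into homogeneous components; comparing the degree $2n$ parts of $e^2=e$ at the extremal (largest positive, resp.\ smallest negative) index with $e_n\neq0$ yields $e_n^2=0$, and reducedness forces $e_n=0$. Iterating, one has $e=e_0\in K_0=k$, which being a field contains only the idempotents $0$ and $1$. Thus the decomposition $K=K^1\times\cdots\times K^m$ from Corollary \ref{grothendieck} collapses to $m=1$.

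\medskip

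Second, I would upgrade ``ring-indecomposable graded field'' to ``graded domain''. Any two nonzero homogeneous elements of $K$ are invertible, hence their product is a nonzero homogeneous element. For general $x,y\in K\setminus\{0\}$, the top-degree component of $xy$ is the product of the top-degree components of $x$ and $y$, which is therefore nonzero; hence $xy\neq0$, so $K$ is an integral domain. Third, let $d:=\min\{n>0\mid K_n\neq0\}$; this exists because $\dim R=1$ forces $R\neq k$, and $R_{\ge1}=K_{\ge1}$. Pick $u\in K_d\setminus\{0\}$; it is invertible. If $K_n\neq0$ for some $n$ with $d\nmid n$, writing $n=qd+r$ with $0<r<d$ and multiplying by $u^{-q}$ yields a nonzero element of $K_r$, contradicting the minimality of $d$. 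Multiplication by $u^q$ identifies $K_0=k\xrightarrow{\sim}K_{qd}$ for every $q\in\Z$, so $K=k[u^{\pm1}]$ with $\deg u=d$, and finally $R=K_{\ge0}=k[u]\simeq k[t]$.

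\medskip

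The main obstacle is really just the first step: one needs to verify that idempotents in a reduced $\Z$-graded ring are concentrated in degree $0$, which is not entirely formal but follows cleanly by the extremal-component argument above. The remaining steps are standard facts about graded fields with $K_0=k$; the only subtle point is using $a<0$ to force $R_0=K_0$, which is what makes the degree-zero data of $R$ coincide with that of the ambient graded field $K$.
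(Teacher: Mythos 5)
Your plan is sound in outline, and your idempotent argument (showing that any idempotent $e\in K$ is concentrated in degree $0$ and hence lies in $K_0=k$, so that $K$ is ring-indecomposable) is correct. However, there is a genuine gap in the second and fourth steps. You cite the proof of Proposition \ref{K and Lambda}(e) for the statement that $R$ reduced implies every nonzero homogeneous element of $K$ is invertible, i.e.\ that $K$ is a graded field. That statement is not correct as it stands: reducedness only forces $K$ to be a \emph{product} of graded fields, which is precisely the structure result \cite[Lemma 1.5.7]{BH} that the paper's own one-line proof of this proposition invokes. (Indeed, for $R=k[x,y]/(xy)$ with $\deg x=\deg y=1$, $R$ is reduced but the image of $x$ in $K$ is a nonzero homogeneous zero-divisor, hence not invertible; the wording in the proof of Proposition \ref{K and Lambda}(e) is an informal shorthand for the product statement.) Consequently your Steps 4 and 5, which derive ``$K$ is a domain'' and ``$K\simeq k[u^{\pm1}]$'' from the graded-field property, are not justified as written.

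To repair the argument you must actually connect the two halves. Once Step 3 gives that $K$ is ring-indecomposable, either invoke \cite[Lemma 1.5.7]{BH}: the product of graded fields collapses to a single factor $k^1[t^{\pm1}]$, and then $k^1=K_0=k$ forces $K=k[t^{\pm1}]$ and $R=K_{\ge0}=k[t]$ — this is exactly the paper's proof, with your idempotent argument replacing the observation that $K_0=k$ has no nontrivial idempotents. Alternatively, if you want to avoid the citation entirely, you need a separate argument that indecomposability implies $K$ is a domain: show that the minimal primes of $K$ are pairwise comaximal (using that $\dim R=1$ and that $R_{>0}$ contains a homogeneous non-zero-divisor, so that any prime of $R$ disjoint from the homogeneous non-zero-divisors has graded part equal to a single minimal prime), then apply the Chinese Remainder Theorem and indecomposability. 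After that, ``domain $+$ graded total quotient ring'' gives the graded-field property, and your Step 5 goes through unchanged.
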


\begin{proof}
Since $R$ is reduced, $K$ is a product $k^1[t_1^{\pm1}]\times\cdots\times k^m[t_m^{\pm1}]$ of Laurent polynomial algebras over field extensions $k^i$ of $k$ \cite[Lemma 1.5.7]{BH}.
Since $a<0$, we have $R=K_{\ge0}$ by  Lemma \ref{basic properties of K}(a).
Thus $K_0=R_0=k$ holds, and therefore $K=k[t^{\pm1}]$ and $R=K_{\ge0}=k[t]$.
\end{proof}

We are ready to prove Corollary \ref{general tilting reduced}.

\begin{proof}[Proof of Corollary \ref{general tilting reduced}]
(a) is shown in Proposition \ref{negative and reduced imply regular},
and (b) and (c) are immediate from Theorem \ref{general tilting}.
Now (d) is clear from the shape of $\Gamma$ in \eqref{gamma}.
\end{proof}

\subsection{Proof of Theorem \ref{negative case}}

We start with the following general result for `silting reduction' of triangulated categories.

\begin{proposition}\cite[Theorem 4.8]{IY1}\label{silting reduction}
Let $\UU$ be a triangulated category with a silting object $U$. For any $P\in\add U$, the Verdier quotient $\UU/\thick P$ has a silting object $U$.
\end{proposition}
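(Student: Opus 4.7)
The result is cited as [IY1, Theorem 4.8]; I outline the strategy behind that proof. Let $Q \colon \UU \to \bar\UU := \UU/\thick P$ denote the Verdier quotient functor and set $\bar U := Q(U)$. Two things must be verified: that $\thick_{\bar\UU}(\bar U) = \bar\UU$, and that $\Hom_{\bar\UU}(\bar U, \bar U[i]) = 0$ for every $i > 0$.

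The generation statement is immediate. Since $Q$ is a triangulated functor that preserves thick closures, and $\UU = \thick_\UU(U)$ by hypothesis,
\[
\bar\UU \;=\; Q(\UU) \;=\; Q(\thick_\UU U) \;\subseteq\; \thick_{\bar\UU}(\bar U).
\]

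The main content is the Hom-vanishing. By the calculus of fractions for Verdier quotients, any element of $\Hom_{\bar\UU}(\bar U, \bar U[i])$ is represented by a roof $U \xleftarrow{s} X \xrightarrow{f} U[i]$ in $\UU$ whose cone of $s$ lies in $\thick P$; equivalently, there is a triangle $X \xrightarrow{s} U \to C \to X[1]$ with $C \in \thick P$. To show this roof represents zero in $\bar\UU$, it suffices to produce a morphism $t \colon Y \to X$ with $\cone(st) \in \thick P$ and $ft = 0$. Applying $\Hom_\UU(-, U[i])$ to the triangle gives the exact sequence
\[
\Hom_\UU(C, U[i]) \to \Hom_\UU(U, U[i]) \to \Hom_\UU(X, U[i]) \to \Hom_\UU(C, U[i+1]),
\]
and the silting hypothesis forces the middle term to vanish. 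Thus the only obstruction to having $f$ lift along $s$ (and hence be zero) lives in $\Hom_\UU(C, U[i+1])$; the remaining task is to refine $X$ so as to kill this obstruction.

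The main obstacle is that $\thick P$ is closed under arbitrary shifts, so although $P \in \add U$ yields the silting vanishing $\Hom_\UU(U, P[j]) = 0$ for all $j > 0$, no blanket vanishing $\Hom_\UU(U, C[i]) = 0$ holds for arbitrary $C \in \thick P$. The device, carried out carefully in [IY1], is induction on the length of a presentation of $C$ by triangles from objects of $\add P$ and its shifts: each inductive step uses the octahedral axiom to replace the roof by an equivalent one whose associated cone is built from strictly fewer shifts of $P$, and the base case is precisely the silting vanishing $\Hom_\UU(U, P[j]) = 0$ for $j > 0$. The induction terminates, producing the desired $t$; hence $\Hom_{\bar\UU}(\bar U, \bar U[i]) = 0$ for all $i > 0$, and $\bar U$ is silting in $\bar\UU$.
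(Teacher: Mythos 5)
The paper itself does not prove this proposition; it is cited from Iyama--Yang and used as a black box, so there is no internal proof to compare against. What you offer is a sketch of the argument behind the citation, and while your overall strategy (calculus of fractions plus reduction of the cone) is a legitimate one, the inductive step as you state it has a real gap.

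The precise criterion you want is correct: the roof $U \xleftarrow{s} X \xrightarrow{f} U[i]$ represents zero in $\UU/\thick P$ iff there is $t\colon Y\to X$ with $\cone(st)\in\thick P$ and $ft=0$. Your exact sequence, using $\Hom_\UU(U,U[i])=0$, shows that $\Hom_\UU(X,U[i])$ embeds into $\Hom_\UU(C,U[i+1])$ for $C=\cone(s)$, so the task is indeed to refine the roof until this obstruction group vanishes. But ``induction on the length of a presentation of $C$ by triangles from shifts of $\add P$, using the octahedral axiom to reduce the number of shifts'' does not go through as stated. To pass from $C$ to a shorter cone $A$ you must factor the map $g\colon U\to C$ through $A$ in a triangle $A\to C\to B\to A[1]$; the obstruction to this factorisation lives in $\Hom_\UU(U,B)$, and for a generic choice of such a triangle there is no reason for that group to vanish, since $B$ may involve arbitrary negative shifts of $P$. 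The silting vanishing $\Hom_\UU(U,P[j])=0$ for $j>0$ that you invoke as the ``base case'' is in fact the ingredient needed at every refinement step, and only helps if the top piece $B$ is built from shifts $P[j]$ with $j>0$.

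The missing idea is therefore a \emph{weight (co-t-structure) truncation} of $C$ inside $\thick P$. Since $\add P$ is a silting subcategory of $\thick P$, there is a bounded co-t-structure on $\thick P$ with co-heart $\add P$; in particular, for the fixed $i>0$ one can choose a triangle $C_{\le i}\to C\to C_{>i}\to C_{\le i}[1]$ in which $C_{>i}$ is an iterated extension of objects $P'[j]$ with $P'\in\add P$ and $j>i$, and $C_{\le i}$ is built from $P'[j]$ with $j\le i$. Then $\Hom_\UU(U,C_{>i})=0$ because $j>i>0$, so $g$ factors through $C_{\le i}$, and the octahedron gives $t\colon Y\to X$ with $\cone(st)=C_{\le i}$; finally $\Hom_\UU(C_{\le i},U[i+1])=0$ because every piece $P'[j]$ with $j\le i$ satisfies $\Hom_\UU(P',U[i+1-j])=0$, whence $ft=0$ and the roof vanishes. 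This is a single truncation, not an open-ended induction, and the place where the silting hypothesis enters is the vanishing $\Hom_\UU(U,C_{>i})=0$, not the ``base case.'' It is also worth noting that the actual proof in Iyama--Yang is different again: under their conditions (P1)/(P2), which hold automatically here, they identify $\UU/\thick P$ with a subfactor category $\ZZ/[\add P]$ and transfer the silting structure through that equivalence, rather than arguing directly with roofs.
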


In the rest, we assume $a<0$. Now we prove Theorem \ref{negative case}.

\begin{proof}[Proof of Theorem \ref{negative case}]
(a)(b) Let $\UU=\DD_R^{\ge0}\cap(\DD_R^{>a})^*$.
By Theorem \ref{embedding2}(a)(c), there are triangle equivalences
\[{\displaystyle\underline{\CM}_0^{\Z}R\simeq\DD_R^{\ge-a}\cap(\DD_R^{>a})^*\simeq\frac{\UU}{\KKK^{\bo}(\proj^{[0,-a-1]}R)}=\frac{\UU}{\thick P}}\]
for $P=\bigoplus_{i=a+1}^{0}R(i)$.
By Proposition \ref{K and Lambda}(c), the triangulated category $\UU$ has a tilting object $U=\bigoplus_{i=a+1}^{a+p}R(i)_{\ge0}$.
Applying Proposition \ref{silting reduction} to $\UU$ and the direct summand $P$ of $U$, it follows that $U$ is a silting object in $\UU/\thick P\simeq\underline{\CM}_0^{\Z}R$.

(c) 
Assume that $R$ is not regular and that $\underline{\CM}^{\Z}_0R$ has a tilting object $T$. Let $\Lambda=\underline{\End}_R^{\Z}(T)$.
By Theorem \ref{tilting functor for CM}, there is a triangle equivalence $F:\underline{\CM}_0^{\Z}R\simeq\KKK^{\bo}(\proj \Lambda)$ sending $T$ to $\Lambda$ and making the following diagram commutative.
\[\xymatrix@R1.5em{
\underline{\CM}_0^{\Z}R\ar[rr]^F\ar[d]^{(a)}&&\KKK^{\bo}(\proj\Lambda)\ar[d]^{\nu}\\
\underline{\CM}_0^{\Z}R\ar[rr]^F&&\KKK^{\bo}(\proj\Lambda)
}\]
For all $\ell\ge0$, $\nu^{\ell}(\Lambda)\in\DDD^{\le0}(\mod\Lambda)$ holds clearly, and hence $H^i(\nu^{\ell}(\Lambda))=0$ holds for all $i>0$.

On the other hand, take a surjective morphism $f:\bigoplus_{j=1}^nR(-b_j)\to T$ in $\mod^{\Z}R$, and let
\[s:=\min\{b_j\mid 1\le j\le n\}\le t:=\max\{b_j\mid1\le j\le n\}.\]
Then $(\Omega^iT)_{<s}=0$ holds for all $i\ge0$.
Since $a<0$, there exists $\ell\gg0$ such that $t<s-\ell a$. Then for all $i\ge0$, we have
$(\Omega^iT(\ell a))_{\le t}=0$ and hence
\[\Hom^{\Z}_R(T,\Omega^iT(\ell a))\subset\Hom^{\Z}_R(\bigoplus_{j=1}^nR(-b_j),\Omega^iT(\ell a))=\bigoplus_{j=1}^n(\Omega^iT(\ell a))_{b_j}=0.\]
Thus
$H^{-i}(\nu^{\ell}(\Lambda))=\Hom_{\DDD^{\bo}(\mod \Lambda)}(\Lambda,\nu^{\ell}(\Lambda)[-i])=\underline{\Hom}^{\Z}_R(T,\Omega^iT(\ell a))=0$ holds for all $i\ge0$.

Therefore for $\ell\gg0$, $\nu^{\ell}(\Lambda)$ is acyclic and hence zero in $\DDD^{\bo}(\mod\Lambda)$. This is a contradiction since $\nu$ is an autoequivalence. 
\end{proof}

\subsection{Proof of Theorem \ref{example hypersurface}}\label{subsection proof of HS}

(a) 
Since $\Gamma$ is Iwanaga-Gorenstein by Theorem \ref{general tilting}(d),
it suffices to show $\projdim_\Gamma(D\Gamma)\leq 2$.
Recall that $\Gamma$ has the following form.
{\small\begin{equation*}
\Gamma=
\begin{bmatrix}
R_0&0&\cdots&0&0&
0\\
R_1&R_0&\cdots&0&0&
0\\
\vdots&\vdots&\ddots&\vdots&\vdots
&\vdots\\
R_{a-2}&R_{a-3}&\cdots&R_0&0&
0\\
R_{a-1}&R_{a-2}&\cdots&R_1&R_0&
0\\
K_a&K_{a-1}&\cdots&K_2&K_1&
K_0
\end{bmatrix}.
\end{equation*}}
For $1\leq i \leq a+1$, let $e_i\in\Gamma$ be the element whose $(i,i)$-entry is $1$ and the others are $0$, and let $P^i=\Gamma e_i$ be the projective $\Gamma^{\op}$-module corresponding to the $i$-th column.

First we claim that the simple $\Gamma^{\op}$-module $S^i=P^i/{\rm rad}P^i$ has projective dimension at most $2$ for $1 \leq i \leq a$.
More precisely, we show that the sequence
\begin{eqnarray}
0\to P^{i+2} \xrightarrow{{\tiny {}^t[y \ -x]}} (P^{i+1})^{\oplus 2}\xrightarrow{{\tiny[x \ y]}} P^{i}\to S^i \to 0 \label{projresol_simple}
\end{eqnarray}
is exact (where $P^{a+2}=0$). Indeed there is an exact sequence 
\begin{eqnarray}
0 \to k[x,y](-2)\xrightarrow{{\tiny {}^t[y \ -x]}}k[x,y](-1)^{\oplus 2}\xrightarrow{{\tiny[x \ y]}}k[x,y]\to k \to 0 \label{Koszulcomplex}
\end{eqnarray}
in $\mod^{\Z}k[x,y]$.
By $k[x,y]_{<n}=R_{<n}$, the degree $i$-part
\[
0 \to R_{i+2}\to (R_{i+1})^{\oplus 2}\to R_i \to 0
\]
of \eqref{Koszulcomplex} is exact for $1\leq i <a=n-2$.
Moreover applying $-\otimes_{k[x,y]}K$ to \eqref{Koszulcomplex}, we have exact sequences $0\to K(-2)\to K(-1)^{\oplus 2}\to K\to 0$ and
\[
0 \to K_{i+2}\to (K_{i+1})^{\oplus 2}\to K_i \to 0
\]
for $i \in \Z$.
Thus \eqref{projresol_simple} is exact since each entry is exact by the above two exact sequences.

By the above claim, any $\Gamma^{\op}$-module annihilated by $e_{a+1}$ has projective dimension at most $2$.
In particular we have $\projdim_\Gamma D(e_i\Gamma)\leq 2$ for $1\leq i\leq a$. 
Finally there is an exact sequence
\[
0\to {}^t\, [0 \ 0\ \cdots \ 0 \ DK_0]\to D(e_{a+1}\Gamma)\to{}^t\, [DK_a \ DK_{a-1} \  \cdots \ DK_1 \ 0]\to 0.
\]
The left term is isomorphic to $\Gamma e_{a+1}$ since $K_0$ is a self-injective $k$-algebra.
The right term has projective dimension at most $2$ since it is annihilated by $e_{a+1}$.
Thus $\projdim_\Gamma D(e_{a+1}\Gamma)\leq 2$ holds, and we have the desired inequality.

(b) 
Since $R$ is a hypersurface, $[2]=(n)$ holds. Since $R$ has $a$-invariant $n-2$, our triangulated category $\underline{\CM}_0^{\Z}R$ has a Serre functor $\S=(n-2)$ by Proposition \ref{AR duality for CM}.
Thus $\S^n\simeq[2(n-2)]$ holds, and $\underline{\CM}_0^{\Z}R$ is a $\frac{2(n-2)}{n}$-Calabi-Yau triangulated category.
If $\Gamma'$ is derived equivalent to $\Gamma$, then $\KKK^{\bo}(\proj\Gamma')\simeq\underline{\CM}_0^{\Z}R$ holds, and hence
$1\le\frac{2(n-2)}{n}<\injdim\Gamma'_{\Gamma'}$ by \cite[Proposition 1.10(c)]{HIMO}.

(c)
Since $R^i=k[x,y]/(f_i^{n_i})$, we have a monomorphism $R\subseteq R^1\times\cdots\times R^m$ of $\Z$-graded rings whose cokernel has finite length as an $R$-module.
Thus we have an isomorphism $K\simeq K^1\times\cdots\times K^m$ of $\Z$-graded rings.
It restricts to an isomorphism $K_{\ge0}\simeq K^1_{\ge0}\times\cdots\times K^m_{\ge0}$ of $\Z$-graded rings and of $\Z$-graded $R$-modules. We show that $K^i_{\ge0}$ is indecomposable in $\CM^{\Z}R$.

By our choice, $g_i=\alpha'_ix+\beta'_iy\in k[x,y]$ is a non-zero-divisor of $R^i$, and by Lemma \ref{basic properties of K1}, we have $K^i=R^i[g_i^{-1}]=k[h_i,g_i^{\pm1}]$ for $h_i=f_i/g_i$.
We have isomorphisms
\[K^i=K^i_0[g_i^{\pm1}]\ \mbox{ and }\ K^i_0\simeq k[b_i]/(b_i^{n_i})\]
for the polynomial ring $k[b_i]$, where $b_i$ corresponds to $h_i$.
Since $\End_R^{\Z}(K^i_{\ge 0})=(K^i_{\ge 0})_0=K^i_0=k[b_i]/(b_i^{n_i})$ is a local algebra, $K^i_{\ge0}$ is indecomposable.

(d)
The Jacobson radical of $\Gamma$ and its square are
\begin{equation*}
{\small {\rm rad}\Gamma=}
\left[\begin{smallmatrix}
0&0&0&\cdots&0&0&0&0\\
R_1&0&0&\cdots&0&0&0&0\\
R_2&R_1&0&\cdots&0&0&0&0\\
R_3&R_2&R_1&\cdots&0&0&0&0\\
\vdots&\vdots&\vdots&\ddots&\vdots&\vdots&\vdots&\vdots\\
R_{a-2}&R_{a-3}&R_{a-4}&\cdots&R_1&0&0&0\\
R_{a-1}&R_{a-2}&R_{a-3}&\cdots&R_2&R_1&0&0\\
K_{a}&K_{a-1}&K_{a-2}&\cdots&K_3&K_2&K_1&{\rm rad}K_0
\end{smallmatrix}\right],\ {\small {\rm rad}^2\Gamma=}
\left[\begin{smallmatrix}
0&0&0&\cdots&0&0&0&0\\
0&0&0&\cdots&0&0&0&0\\
R_2&0&0&\cdots&0&0&0&0\\
R_3&R_2&0&\cdots&0&0&0&0\\
\vdots&\vdots&\vdots&\ddots&\vdots&\vdots&\vdots&\vdots\\
R_{a-2}&R_{a-3}&R_{a-4}&\cdots&0&0&0&0\\
R_{a-1}&R_{a-2}&R_{a-3}&\cdots&R_2&0&0&0\\
K_{a}&K_{a-1}&K_{a-2}&\cdots&K_3&K_2&{\rm rad}K_1&{\rm rad}^2K_0\end{smallmatrix}\right].
\end{equation*}
Thus ${\rm rad}\Gamma/{\rm rad}^2\Gamma$ is a direct sum of the following:
\begin{itemize}
\item
$e_{i+1}(\frac{{\rm rad}\Gamma}{{\rm rad}^2\Gamma})e_i=R_1=\langle x,y \rangle_k \hspace{5mm} (1\le i \le a-1)$. \vspace{2mm}
\item
$e_{a+1}(\frac{{\rm rad}\Gamma}{{\rm rad}^2\Gamma})e_a
=\frac{K_1}{{\rm rad} K_1}=\frac{K_1^1}{{\rm rad} K_1^1}\times\cdots\times\frac{K_1^m}{{\rm rad} K_1^m}
=\frac{k[b_1]}{(b_1)}g_1\times\cdots\times\frac{k[b_m]}{(b_m)}g_m$ \vspace{2mm}
\item 
$e_{a+1}(\frac{{\rm rad}\Gamma}{{\rm rad}^2\Gamma})e_{a+1}
=\frac{{\rm rad} K_0}{{\rm rad}^2 K_0}=\frac{{\rm rad} K_0^1}{{\rm rad}^2 K_0^1}\times\cdots\times\frac{{\rm rad} K_0^m}{{\rm rad}^2 K_0^m}
=\frac{(b_1)}{(b_1^2)}\times\cdots\times\frac{(b_m)}{(b_m^2)}$.
\end{itemize}
Thus we obtain the quiver of $\Gamma$ as in the assertion. The proof of relations are direct and left to the reader.

(e) Clear from (d) and (a).
\qed

\subsection{Proof of Theorem \ref{simple singularity}}\label{subsection proof of simple singularity}

We prove Theorem \ref{simple singularity} by applying Theorem \ref{general tilting} and a general recipe to calculate mutation \cite{AI} given by Mizuno \cite[Theorem 1.2]{M}.
We denote by $V$ and $\Gamma$ the tilting object and its endomorphism algebra given in
Theorem \ref{general tilting}.

($A_{2n-1}$) Let $R=k[x,y]/(x^{2n}-y^2)$ with $\deg x=1$ and $\deg y=n$, so $a=n-1$.
Then $K=k[t^{\pm1}]\times k[u^{\pm1}]$ with $\deg t=\deg u=1$, $x=t+u$ and $y=t^n-u^n$, so $p=1$.
Our $V$ is $(\bigoplus_{i=1}^{n-1}R(i)_{\ge0})\oplus k[t]\oplus k[u]$,
and $\Gamma$ is the path algebra of type $D_{n+1}$:
{\small\[\xymatrix@R=0em{
&&&&&k[t]\\
R(1)_{\ge0}\ar[r]^x&R(2)_{\ge0}\ar[r]^x&\cdots\ar[r]^(.4)x&R(n-2)_{\ge0}\ar[r]^x&R(n-1)_{\ge0}\ar[ru]^(.6)t\ar[rd]^(.6)u\\
&&&&&k[u].
}\]}

($A_{2n}$) Let $R=k[x,y]/(x^{2n+1}-y^2)$ with $\deg x=2$ and $\deg y=2n+1$, so $a=2n-1$.
Then $K=k[t^{\pm1}]$ with $\deg t=1$, $x=t^2$ and $y=t^{2n+1}$, so $p=1$.
Our $V$ is $\bigoplus_{i=1}^{2n}R(i)_{\ge0}$, and $\Gamma$ is the path algebra of type $A_{2n}$:
{\small\[\xymatrix@R=1em{
R(1)_{\ge0}\ar[r]^x&R(3)_{\ge0}\ar[r]^x&\cdots\ar[r]^(.4)x&R(2n-3)_{\ge0}\ar[r]^x&R(2n-1)_{\ge0}\ar[d]^t\\
R(2)_{\ge0}\ar[r]^x&R(4)_{\ge0}\ar[r]^x&\cdots\ar[r]^(.4)x&R(2n-2)_{\ge0}\ar[r]^x&R(2n)_{\ge0}.
}\]}

($D_{2n+1}$) Let $R=k[x,y]/(x^{2n}-xy^2)$ with $\deg x=2$ and $\deg y=2n-1$, so $a=2n-1$. Then 
$K=k[t^{\pm1}]\times k[u^{\pm1}]$ with $\deg t=1$, $\deg u=2n-1$, $x=t^2$, $y=t^{2n-1}+u$, so $p=2n-1$.
Our $V$ is $(\bigoplus_{i=1}^{2n-1}R(i)_{\ge0})\oplus k[t]\oplus(\bigoplus_{i=1}^{2n-1}k[u](i)_{\ge0})$,
and $\Gamma$ is
{\small\[\xymatrix@R=1em{
k[u](1)_{\ge0}&k[u](3)_{\ge0}\ar@{.}[dl]&\ar@{.}[dl]&k[u](2n-3)_{\ge0}\ar@{.}[dl]&k[u](2n-1)_{\ge0}\ar@{.}[dl]\\
R(1)_{\ge0}\ar[r]^x\ar[u]&R(3)_{\ge0}\ar[r]^x\ar[u]&\cdots\ar[r]^(.4)x&R(2n-3)_{\ge0}\ar[r]^x\ar[u]&R(2n-1)_{\ge0}\ar[d]^t\ar[u]\\
R(2)_{\ge0}\ar[r]^x\ar[d]&R(4)_{\ge0}\ar[r]^x\ar[d]&\cdots\ar[r]^(.4)x&R(2n-2)_{\ge0}\ar[r]^x\ar[d]&k[t]\\
k[u](2)_{\ge0}&k[u](4)_{\ge0}\ar@{.}[ul]&\ar@{.}[ul]&k[u](2n-2)_{\ge0}.\ar@{.}[ul]
}\]}
This is derived equivalent to the path algebra of type $A_{4n-1}$
by mutating the summands $k[u](i)_{\ge0}$ with $1\le i\le 2n-1$:
{\small\[\xymatrix@R=1em{
\bullet&\bullet\ar@{<-}[dl]&\ar@{<-}[dl]&\bullet\ar@{<-}[dl]&\bullet\ar@{<-}[dl]\\
R(1)_{\ge0}\ar@{<-}[u]&R(3)_{\ge0}\ar@{<-}[u]&\cdots&R(2n-3)_{\ge0}\ar@{<-}[u]&R(2n-1)_{\ge0}\ar[d]\ar@{<-}[u]\\
R(2)_{\ge0}\ar@{<-}[d]&R(4)_{\ge0}\ar@{<-}[d]&\cdots&R(2n-2)_{\ge0}\ar[r]\ar@{<-}[d]&k[t]\\
\bullet&\bullet\ar@{<-}[ul]&\ar@{<-}[ul]&\bullet\ar@{<-}[ul].}\]}

($D_{2n}$) Let $R=k[x,y]/(x^{2n-1}-xy^2)$ with $\deg x=1$ and $\deg y=n-1$, so $a=n-1$.
Then $K=k[t^{\pm1}]\times k[u^{\pm1}]\times k[v^{\pm1}]$ with $x=t+u$, $y=t^{n-1}-u^{n-1}+v$, $\deg t=\deg u=1$ and $\deg v=n-1$, so $p=n-1$.
Our $V$ is $(\bigoplus_{i=1}^{n-1}R(i)_{\ge0})\oplus k[t]\oplus k[u]\oplus(\bigoplus_{i=1}^{n-1}k[v](i)_{\ge0})$, and $\Gamma$ is
{\small\[\xymatrix@R=1em{
R(1)_{\ge0}\ar[r]^x\ar[d]&R(2)_{\ge0}\ar[r]^x\ar[d]&\cdots\ar[r]^(.4)x&R(n-2)_{\ge0}\ar[r]^x\ar[d]&R(n-1)_{\ge0}\ar[r]\ar[d]\ar[dr]&k[t]\\
k[v](1)_{\ge0}&k[v](2)_{\ge0}\ar@{.}[ul]&\ar@{.}[ul]&k[v](n-2)_{\ge0}\ar@{.}[ul]&k[v](n-1)_{\ge0}\ar@{.}[ul]&k[u].
}\]}
This is derived equivalent to the path algebra of type $D_{2n}$
by mutating the summands $k[v](i)_{\ge0}$ with $1\le i\le n-1$:
{\small\[\xymatrix@R=1em{
R(1)_{\ge0}\ar@{<-}[d]&R(2)_{\ge0}\ar@{<-}[d]&\cdots&R(n-2)_{\ge0}\ar@{<-}[d]&R(n-1)_{\ge0}\ar@{<-}[d]\ar[dr]\ar[r]&k[t]\\
\bullet&\bullet\ar@{<-}[ul]&\ar@{<-}[ul]&\bullet\ar@{<-}[ul]&\bullet\ar@{<-}[ul]&k[u].
}\]}

($E_6$) Let $R=k[x,y]/(x^4-y^3)$ with $\deg x=3$ and $\deg y=4$, so $a=5$.
Then $K=k[t^{\pm1}]$ with $\deg t=1$, $x=t^3$ and $y=t^4$, so $p=1$.
Our $V$ is $\bigoplus_{i=1}^6R(i)_{\ge0}$, and $\Gamma$ is
{\small\[\xymatrix@R=1.5em{
&R(1)_{\ge0}\ar[r]^x\ar[d]^y&R(4)_{\ge0}\ar[d]^{t^2}\\
R(2)_{\ge0}\ar[r]^x&R(5)_{\ge0}\ar[r]^t&R(6)_{\ge0}\ar@{.}[ul]&R(3)_{\ge0}\ar[l]_x.
}\]}
This is derived equivalent to the path algebra of type $E_6$ by mutating the summand $R(1)_{\ge0}$:
{\small\[\xymatrix@R=1.5em{
&\bullet\ar@{<-}[r]\ar@{<-}[d]&R(4)_{\ge0}\\
R(2)_{\ge0}\ar[r]&R(5)_{\ge0}&R(6)_{\ge0}\ar@{<-}[ul]&R(3)_{\ge0}.\ar[l]
}\]}

($E_7$) Let $R=k[x,y]/(x^3y-y^3)$ with $\deg x=2$ and $\deg y=3$, so $a=4$.
Then $K=k[t^{\pm1}]\times k[u^{\pm1}]$ with $\deg t=1$, $\deg u=2$, $x=t^2+u$ and $y=t^3$, so $p=2$.
Our $V$ is $(\bigoplus_{i=1}^4R(i)_{\ge0})\oplus k[t]\oplus k[u]\oplus k[u](1)_{\ge0}$, and $\Gamma$ is
{\small\[\xymatrix@R=1.5em{
k[u]\ar@{.}[r]&R(1)_{\ge0}\ar[r]^x\ar[d]^y&R(3)_{\ge0}\ar[r]^u\ar[d]^{t^2}&k[u](1)_{\ge0}\\
R(2)_{\ge0}\ar[r]^x&R(4)_{\ge0}\ar[r]^t\ar[lu]_u&k[t].\ar@{.}[lu]
}\]}
This is derived equivalent to the path algebra of type $E_7$ by successively mutating the summands $R(1)_{\ge0}$ and $k[u]$:
{\small\[\xymatrix@R=1.5em{
\bullet\ar[r]\ar@/^15pt/@{<-}[rr]&\bullet\ar@{<-}[d]&R(3)_{\ge0}\ar[r]&k[u](1)_{\ge0}\\
R(2)_{\ge0}\ar[r]&R(4)_{\ge0}&k[t].\ar@{<-}[lu]
}\]}

($E_8$) Let $R=k[x,y]/(x^5-y^3)$ with $\deg x=3$ and $\deg y=5$, so $a=7$.
Then $K=k[t^{\pm1}]$ with $\deg t=1$, $x=t^3$ and $y=t^5$, so $p=1$.
Our $V$ is $\bigoplus_{i=1}^8R(i)_{\ge0}$, and $\Gamma$ is
{\small\[\xymatrix@R=1.5em{
R(1)_{\ge0}\ar[r]^x\ar[dr]_y&R(4)_{\ge0}\ar[r]^x&R(7)_{\ge0}\ar[d]^t&R(2)_{\ge0}\ar[l]_y\ar[d]^x\\
R(3)_{\ge0}\ar[r]_x&R(6)_{\ge0}\ar[r]_{t^2}&R(8)_{\ge0}\ar@{.}[ur]\ar@{.}[ull]&R(5)_{\ge0}\ar[l]^x.
}\]}
This is derived equivalent to the path algebra of type $E_8$ by successively mutating the summands $R(1)_{\ge0}$, $R(4)_{\ge0}$, $R(8)_{\ge0}$ and $R(2)_{\ge0}$:
{\small\[\xymatrix@R=1.5em{
\bullet\ar[r]\ar@{<-}[dr]&\bullet\ar@/^15pt/@{<-}[rr]&R(7)_{\ge0}&\bullet\ar@{<-}[l]\\
R(3)_{\ge0}\ar[r]&R(6)_{\ge0}&\bullet\ar[ur]&R(5)_{\ge0}.\ar@{<-}[l].}\qedhere\]}
\vskip-1.5em\qed

\subsection{Proof of Proposition \ref{Tpq singularity}}\label{proof for Tpq}
We prove Proposition \ref{Tpq singularity} by applying Theorem \ref{general tilting} and mutation \cite{AI,M}. We omit the details of calculations. 

(a) We apply Theorem \ref{example hypersurface}. 
Let $K^i$ be the $\mathbb{Z}$-graded total quotient ring of $k[x,y]/(x-\alpha_i y)$.
Since $a=2$ in this case, our $V$ is $(\bigoplus_{i=1}^{2}R(i)_{\geq0})\oplus(\bigoplus_{i=1}^{4}K^i_{\geq0})$, and $\Gamma$ is presented by the quiver with relations
\[{\small\xymatrix@C=4em@R=0em{
& & K^1_{\geq0} \\
& & K^2_{\geq0} \\
R(1)_{\geq0} \ar@/^0.4pc/[r]^x \ar@/_0.4pc/[r]_y & R(2)_{\geq0} \ar[ruu]^{a_1} \ar[ru]|{a_2} \ar[rd]|{a_3} \ar[rdd]_{a_4} && a_i(x-\alpha_i y)=0\ (1\le i\le 4).\\
& & K^3_{\geq0} \\
& & K^4_{\geq0}
}}\]
By mutating the summand $R(2)_{\geq 0}$, we obtain a tilting object in $\underline{\CM}^{\Z}R$ whose endomorphism algebra is the following canonical algebra, where $\lambda=(\alpha_1-\alpha_4)(\alpha_2-\alpha_4)(\alpha_1-\alpha_3)^{-1}(\alpha_2-\alpha_4)^{-1}$ (see also \cite[Figure 1.1]{Jas}).
\[{\small\xymatrix@C=4em@R=0em{
 & K^1_{\geq0} \ar[rdd]^{b_1}& \\
 & K^2_{\geq0} \ar[rd]|{b_2} &&b_1a_1+b_2a_2+b_3a_3=0 \\
 R(1)_{\geq0} \ar[ruu]^{a_1} \ar[ru]|{a_2} \ar[rd]|{a_3} \ar[rdd]_{a_4} & & \bullet &b_1a_1+\lambda b_2a_2+b_4a_4=0.\\
 & K^3_{\geq0} \ar[ru]|{b_3}&\\
  & K^4_{\geq0} \ar[ruu]_{b_4}&
}}\]

(b) Let $K^i$ be the $\mathbb{Z}$-graded total quotient ring of $k[x,y]/(x-\alpha_i y^2)$.
Since $a=3$ in this case, our $V$ is $(\bigoplus_{i=1}^{3}R(i)_{\geq0})\oplus(\bigoplus_{i=1}^{3}K^i_{\geq0})$, and $\Gamma$ is presented by the quiver with relations
\[{\small\xymatrix@C=3em@R=0em{
& & & K^1_{\geq0} \\
R(1)_{\geq0} \ar[r]_y \ar@/^15pt/[rr]^x & R(2)_{\geq0} \ar[r]_y & R(3)_{\geq0} \ar[ru]^{a_1} \ar[r]|{a_2} \ar[rd]_{a_3} & K^2_{\geq0} &a_i(x-\alpha_iy^2)=0\ (1\le i\le 3).\\
& & & K^3_{\geq0}
}}\]
By mutating the summand $R(2)_{\geq 0}$, we obtain a tilting object in $\underline{\CM}^{\Z}R$ whose endomorphism algebra is presented by the quiver with relations
\[{\small\xymatrix@C=4em@R=0em{
& & K^1_{\geq0} \\
& & K^2_{\geq0} &a_i(x-\alpha_i y^2)=0\ (1\le i\le 3)\\
R(1)_{\geq0} \ar@/^0.4pc/[r]^x \ar@/_0.4pc/[r]_{y^2} & R(3)_{\geq0} \ar[ruu]^{a_1} \ar[ru]|{a_2} \ar[rd]|{a_3} \ar[rdd]_{a_4} && a_4y^2=0.\\
& & K^3_{\geq0} \\
& & \bullet
}}\]
As in the case (a), by mutating the summand $R(3)_{\geq 0}$, we obtain a tilting object in $\underline{\CM}^{\Z}R$ whose endomorphism algebra is the following canonical algebra, where $\lambda=(\alpha_2-\alpha_3)(\alpha_1-\alpha_3)^{-1}$.
\[{\small\xymatrix@C=4em@R=0em{
 & K^1_{\geq0} \ar[rdd]^{b_1}& \\
 & K^2_{\geq0} \ar[rd]|{b_2} &&b_1a_1+b_2a_2+b_3a_3=0\\
 R(1)_{\geq0} \ar[ruu]^{a_1} \ar[ru]|{a_2} \ar[rd]|{a_3} \ar[rdd]_{a_4} & & \bullet &b_1a_1+\lambda b_2a_2+b_4a_4=0.\\
 & K^3_{\geq0} \ar[ru]|{b_3}&\\
  & \bullet \ar[ruu]_{b_4}&&
}}\]
\vskip-1.9em\qed

\subsection{Proof of Proposition \ref{no tilting}}\label{subsection proof of no tilting}

We start with the following calculation of DG algebras.

\begin{proposition}\label{k[w]/(w^2)}
Let $\Lambda$ be the algebra in Proposition \ref{no tilting}(b),
and $P$ the projective $\Lambda$-module in Proposition \ref{no tilting}(d). Then there is a triangle equivalence $\KKK^{\bo}(\proj\Lambda)/\thick P\simeq\per k[w]/(w^2)$ for the DG algebra in Proposition \ref{no tilting}(e).
\end{proposition}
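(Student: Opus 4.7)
The plan is to deduce the equivalence from Keller's theorem: any algebraic triangulated category with a classical generator $X$ is triangle equivalent to $\per B$ for $B$ the DG endomorphism algebra of $X$ in a DG enhancement. Set $\TT := \KKK^{\bo}(\proj\Lambda)/\thick P$. Since $\Lambda = \bigoplus_{i=1}^n P^i$ classically generates $\KKK^{\bo}(\proj\Lambda)$ while $P = \bigoplus_{i=1}^{n-1} P^i$ is killed, the image of $P^n$ is a classical generator of $\TT$. It remains to identify its DG endomorphism algebra with $k[w]/(w^2)$, $\deg w = 1-n$, zero differential.

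The central structural input is the $n$-periodic minimal projective resolution of the simple top $S^n = P^n/\mathrm{rad}\,P^n$,
\[
\cdots \to P^{n-1}\to\cdots\to P^{1}\to P^{n}\to S^n \to 0,
\]
which exists because $\Omega S^i = S^{i+1}$ (indices mod $n$). Its one-period brutal truncation $T = [P^{n-1}\to\cdots\to P^{1}\to P^{n}]$ in cohomological degrees $[-(n-1),0]$ contains the subcomplex $[P^{n-1}\to\cdots\to P^{1}]\in\thick P$, so $T\simeq P^{n}$ in $\TT$. Inside $\DDD^{\bo}(\mod\Lambda)$, $T$ fits in a triangle $S^n[n-1]\to T \to S^n \xrightarrow{\delta} S^n[n]$ with $\delta$ the generator of $\Ext^{n}_\Lambda(S^n,S^n)\cong k$ coming from the periodicity; projecting to $\TT$ and using $T\simeq P^n$ produces the desired non-trivial class $w\in\Hom_{\TT}(P^{n},P^{n}[1-n])$. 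Equivalently, $w$ is visibly the zigzag $z_n\,\epsilon_{P^1}\,z_1\cdots\epsilon_{P^{n-1}}\,z_{n-1}$ in Drinfeld's DG quotient (where $\epsilon_{P^j}$ is the adjoined degree-$(-1)$ contracting homotopy), whose closedness follows from the relations $z_iz_{i+1}=0$.

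To complete $\Hom^*_\TT(P^n,P^n)=k\oplus kw$, I would transport the question across $\underline{\CM}_0^{\Z}R\simeq\TT$ (Theorem \ref{negative case}(b)) and invoke Auslander-Reiten-Serre duality (Proposition \ref{AR duality for CM}): since $a=1-n$ and $\dim R=1$, the Serre functor on $\TT$ is a grading-shift autoequivalence, whose interaction with the identity and $w$ forces
\[
\Hom_{\TT}(P^n,P^n[k])=\begin{cases}k & \text{if } k\in\{0,\,1-n\},\\ 0 & \text{otherwise.}\end{cases}
\]
In particular $w^2=0$ is forced for $n\ge 2$ since $2(1-n)\notin\{0,1-n\}$. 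Formality of the DG endomorphism algebra then follows by a degree count: any higher $A_\infty$-operation $m_k(w,\ldots,w)$ lands in cohomological degree $2-kn$, which lies outside $\{0,1-n\}$ whenever $k\ge 2$ and $n\ge 2$. The case $n=1$ is trivial as $\thick P=0$ and $\Lambda$ already equals $k[w]/(w^2)$. Keller's theorem then delivers $\TT\simeq\per k[w]/(w^2)$.

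The main obstacle is verifying the vanishing $\Hom_{\TT}(P^n,P^n[k])=0$ for $k\notin\{0,1-n\}$: while $w$ and the identity are easy to exhibit, ruling out additional classes requires either explicit bookkeeping in Drinfeld's DG quotient (cancelling longer zigzags against coboundaries) or structural leverage via Serre duality on $\underline{\CM}_0^{\Z}R$.
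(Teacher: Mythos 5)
Your plan differs genuinely from the paper's, and it contains a gap that you yourself flag at the end: the vanishing $\Hom_{\TT}(P^n,P^n[k])=0$ for $k\notin\{0,1-n\}$ is never established. The Serre-duality idea only gives a symmetry $\Hom_{\TT}(P^n,P^n[k])\cong D\Hom_{\TT}(P^n,\S P^n[-k])$; even granting $\S P^n\simeq P^n[1-n]$, this is the reflection $k\leftrightarrow 1-n-k$. Combined with the silting property (vanishing for $k>0$, from Theorem \ref{negative case}(a)), it forces vanishing for $k>0$ and $k<1-n$, but says nothing about the middle range $2-n\le k\le -1$, which is nonempty as soon as $n\ge 3$. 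That range is exactly where explicit work is unavoidable, and ``bookkeeping in Drinfeld's DG quotient'' is a harder version of the computation the paper does elsewhere. (Also a minor index slip: with the quiver orientation $i\to i+1$ one has $\Omega S^i\cong S^{i-1}$, not $S^{i+1}$; the paper's $M$ runs $P^1\to\cdots\to P^n$ with $P^1$ in degree $1-n$, which is the correct one-period truncation.)

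The paper sidesteps the Verdier quotient computation entirely. It takes the same complex $M=[P^1\xrightarrow{z}\cdots\xrightarrow{z}P^n]$ but keeps it inside $\KKK^{\bo}(\proj\Lambda)$, checks by a short direct calculation that $\Hom_{\KKK^{\bo}(\proj\Lambda)}(P[i],M)=0$ for all $i$ (this is easy: for $1\le j\le n-1$ the complex $\Hom(P^j,M^\bullet)=M^\bullet e_j$ is $[k\xrightarrow{\sim}k]$, hence acyclic), and uses the triangle $N\to P^n\to M\to N[1]$ with $N\in\thick P$ to get a semiorthogonal decomposition $\KKK^{\bo}(\proj\Lambda)=(\thick P)\perp(\thick M)$. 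This realizes the quotient as the honest thick subcategory $\thick M$, where the DG endomorphism algebra $\mathcal{E}{\rm nd}_\Lambda(M)$ can be written down on the nose — concentrated in degrees $0$, $1$, $1-n$ — and is visibly quasi-isomorphic to $k[w]/(w^2)$ via $w\mapsto(z:P^n\to P^1)$. No roofs, no Serre duality, no formality-by-degree-count argument needed. If you want to salvage your route, the clean fix is to prove the perpendicularity $\Hom(P[i],M)=0$ \emph{first}: it immediately upgrades $\Hom_{\TT}(M,M[k])$ to $\Hom_{\KKK^{\bo}(\proj\Lambda)}(M,M[k])$, which is a finite computation — but at that point you have reproduced the paper's argument.
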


\begin{proof}
Let $M$ be the complex
\[
\cdots\to0\to P^{1}\xrightarrow{z} P^{2}\xrightarrow{z}\cdots\xrightarrow{z}P^{n-2}\xrightarrow{z} P^{n-1}\xrightarrow{z} P^n\to0\to\cdots
\]
in $\KKK^{\bo}(\proj\Lambda)$ whose non-zero degrees are $1-n,2-n,\ldots,0$.
Then $\Hom_{\KKK^{\bo}(\proj\Lambda)}(P[i],M)=0$ holds for any $i\in\Z$, and there is a triangle $N\to P^n \to M\to N[1]$ with $N\in\thick P$.
Thus we have
\[\KKK^{\bo}(\proj\Lambda)=\thick(P\oplus P^n)=\thick(P\oplus M)=(\thick P)\perp(\thick M),\]
and therefore $\KKK^{\bo}(\proj\Lambda)/\thick P\simeq\thick M$.
By \cite[Theorem 4.3]{Ke}, we have a triangle equivalence 
$\thick M\simeq\per\mathcal{E}{\rm nd}_\Lambda(M)$ for the endomorphism DG algebra $\mathcal{E}{\rm nd}_\Lambda(M)$ of $M$.
One can easily verify
\[\mathcal{E}{\rm nd}_\Lambda(M)^i=
\begin{cases}
\Hom_{\Lambda}(P^{n},P^1) & (i=1-n) \\
\bigoplus_{i=1}^n\End_{\Lambda}(P^i)& (i=0) \\
\bigoplus_{i=1}^{n-1}\Hom_{\Lambda}(P^i,P^{i+1})& (i=1) \\
0 & (\mbox{otherwise}),
\end{cases}
\vspace{2mm}
\]
and there is a quasi-isomorphism $k[w]/(w^2)\to\mathcal{E}{\rm nd}_\Lambda(M)$ of DG algebras given by $w\mapsto(z:P^{n}\to P^1)$. Thus
\[
\KKK^{\bo}(\proj\Lambda)/\thick P\simeq\thick M\simeq 
\per\mathcal{E}{\rm nd}_\Lambda(M)\simeq\per k[w]/(w^2).\qedhere
\]
\end{proof}

We are ready to prove Proposition \ref{no tilting}.

\begin{proof}[Proof of Proposition \ref{no tilting}]
(a) is clear. (b) is shown in Theorems \ref{general tilting for qgr}.
(c) and (f) are well known.
(d) and the first sentence of (e) are shown in Theorems \ref{general tilting} ($n=1$) and \ref{negative case} ($n\ge2$).
The last sentence of (e) follows from Proposition \ref{k[w]/(w^2)}.
\end{proof}


\begin{thebibliography}{10}
\bibitem[AI]{AI} T.\ Aihara, O.\ Iyama, \emph{Silting mutation in triangulated categories}, J.\ Lond.\ Math.\ Soc.\ 85 (2012), no.\ 3, 633--668.
\bibitem[AIR]{AIR} C.\ Amiot, O.\ Iyama, I.\ Reiten, Stable categories of Cohen-Macaulay modules and cluster categories, Amer.\ J.\ Math.\ 137 (2015), no.\ 3, 813--857.
\bibitem[AHK]{AHK} L.\ Angeleri H\"ugel, D.\ Happel, H.\ Krause, \emph{Handbook of tilting theory}, London Mathematical Society Lecture Note Series, 332.\ Cambridge University Press, Cambridge, 2007.
\bibitem[Ar]{A} T.\ Araya, \emph{Exceptional sequences over graded Cohen-Macaulay rings}, Math.\ J.\ Okayama Univ.\ 41 (1999), 81--102 (2001).
\bibitem[Au]{Au} M.\ Auslander, \emph{Functors and morphisms determined by objects},  Representation theory of algebras (Proc.\ Conf., Temple Univ., Philadelphia, Pa., 1976),  pp.\ 1--244.\ Lecture Notes in Pure Appl.\ Math.\, Vol.\ 37, Dekker, New York, 1978.
\bibitem[AR1]{AR} M.\ Auslander, I.\ Reiten, \emph{Almost split sequences for Z-graded rings}, Singularities, representation of algebras, and vector bundles (Lambrecht, 1985), 232--243, Lecture Notes in Math., 1273, Springer, Berlin, 1987.
\bibitem[AR2]{AR2} M.\ Auslander, I.\ Reiten, \emph{Cohen-Macaulay modules for graded Cohen-Macaulay rings and their completions}, 
Commutative algebra (Berkeley, CA, 1987), 21--31, Math.\ Sci.\ Res.\ Inst.\ Publ., 15, Springer, New York, 1989.
\bibitem[Be]{Be} A.\ A.\ Beilinson, \emph{Coherent sheaves on ${\bf P}\sp{n}$ and problems in linear algebra}, Funktsional.\ Anal.\ i Prilozhen.\ 12 (1978), no.\ 3, 68--69.
\bibitem[BGG]{BGG} I.\ N.\ Bernstein, I.\ M.\ Gelfand, S.\ I.\ Gelfand, \emph{Algebraic vector bundles on ${\bf P}\sp{n}$ and problems of linear algebra}, Funktsional.\ Anal.\ i Prilozhen.\ 12 (1978), no.\ 3, 66--67.
\bibitem[Bu]{Buc} R.\ O. Buchweitz, \emph{Maximal Cohen-Macaulay modules and Tate-cohomology over Gorenstein rings},
preprint.
\bibitem[BGS]{BGS} R.-O.\ Buchweitz, G.-M.\ Greuel, F.-O.\ Schreyer, \emph{Cohen-Macaulay modules on hypersurface singularities. II}, Invent.\ Math.\ 88 (1987), no.\ 1, 165--182.
\bibitem[BH]{BH} W.\ Bruns, J.\ Herzog, \emph{Cohen-Macaulay rings},
Cambridge Studies in Advanced Mathematics, 39.\ Cambridge University Press, Cambridge, 1993.
\bibitem[C]{C} X.\ Chen, \emph{Generalized Serre duality}, J.\ Algebra 328 (2011), 268--286.
\bibitem[CR]{CR} C.\ Curtis, I.\ Reiner, \emph{Methods of representation theory.\ Vol.\ I.\ With applications to finite groups and orders}, Pure and Applied Mathematics. A Wiley-Interscience Publication. John Wiley \& Sons, Inc., New York, 1981.
\bibitem[DL1]{DL1} L.\ Demonet, X.\ Luo, \emph{Ice quivers with potential associated with triangulations and Cohen-Macaulay modules over orders}, Trans.\ Amer.\ Math.\ Soc.\ 368 (2016), no.\ 6, 4257--4293.
\bibitem[DL2]{DL2} L.\ Demonet, X.\ Luo, \emph{Ice quivers with potential arising from once-punctured polygons and Cohen-Macaulay modules}, Publ.\ Res.\ Inst.\ Math.\ Sci.\ 52 (2016), no.\ 2, 141--205.
\bibitem[DW]{DW} E.\ Dieterich, A.\ Wiedemann, \emph{The Auslander-Reiten quiver of a simple curve singularity}, Trans.\ Amer.\ Math.\ Soc.\ 294 (1986), no.\ 2, 455--475.
\bibitem[DG]{DG} Y.\ A.\ Drozd, G.-M.\ Greuel, \emph{Cohen-Macaulay module type},
Compositio Math.\ 89 (1993), no.\ 3, 315--338.
\bibitem[DR]{DR} J.\ A.\ Drozd, A.\ V.\ Ro{\u\i}ter, \emph{Commutative rings with a finite number of indecomposable integral representations}, Izv.\ Akad.\ Nauk SSSR Ser.\ Mat.\ 31  1967 783--798.
\bibitem[E]{E} D.\ Eisenbud, \emph{Homological algebra on a complete intersection, with an application to group representations}, Trans.\ Amer.\ Math.\ Soc.\ 260 (1980) 35--64.
\bibitem[FU]{FU} M.\ Futaki, K.\ Ueda, \emph{Homological mirror symmetry for Brieskorn-Pham singularities}, Selecta Math.\ (N.S.) 17 (2011), no.\ 2, 435--452.
\bibitem[GL1]{GL1} W.\ Geigle, H.\ Lenzing, \emph{A class of weighted projective curves arising in representation theory of finite-dimensional algebras},
Singularities, representation of algebras, and vector bundles (Lambrecht, 1985), 265--297, Lecture Notes in Math., 1273, Springer, Berlin, 1987.
\bibitem[GL2]{GL2} W.\ Geigle, H.\ Lenzing, \emph{Perpendicular categories with applications to representations and sheaves}, J.\ Algebra 144 (1991), no.\ 2, 273--343.
\bibitem[G1]{G1} V.\ Gelinas, \emph{Conic intersections, Maximal Cohen-Macaulay modules and the Four Subspace problem}, arXiv:1702.06608.
\bibitem[G2]{G2} V.\ Gelinas, \emph{Betti tables for indecomposable matrix
factorizations of $XY(X-Y)(X-\lambda Y)$}, arXiv:1712.07043.
\bibitem[GN]{GN} S.\ Goto, K.\ Watanabe, \emph{On graded rings. I}, J.\ Math.\ Soc.\ Japan 30 (1978), no.\ 2, 179--213
\bibitem[GK]{GK} G.\ M.\ Greuel, H.\ Kn\"orrer, \emph{Einfache Kurvensingularit\"aten und torsionsfreie Moduln}, Math.\ Ann.\  270  (1985),  no.\ 3, 417--425.
\bibitem[Ha1]{Ha1} D.\ Happel, \emph{Triangulated categories in the representation theory of finite-dimensional algebras}, London Mathematical Society Lecture Note Series, 119.\ Cambridge University Press, Cambridge, 1988.
\bibitem[Ha2]{Ha} D.\ Happel, \emph{On Gorenstein algebras},
Representation theory of finite groups and finite-dimensional algebras (Bielefeld, 1991), 389--404, Progr.\ Math., 95, Birkh\"auser, Basel, 1991.
\bibitem[HI]{HI} M.\ Herschend, O.\ Iyama, \emph{$2$-hereditary algebras and 3-Calabi-Yau algebras from hypersurface singularities}, in preparation.
\bibitem[HIO]{HIO} M.\ Herschend, O.\ Iyama, S.\ Oppermann, \emph{$n$-representation infinite algebras}, Adv.\ Math.\ 252 (2014), 292--342. 
\bibitem[HIMO]{HIMO} M.\ Herschend, O.\ Iyama, H.\ Minamoto, S.\ Oppermann, \emph{Representation theory of Geigle-Lenzing complete intersections}, to appear in Mem.\ Amer.\ Math.\ Soc., arXiv:1409.0668.
\bibitem[HU]{HU} A.\ Higashitani, K.\ Ueyama, \emph{Combinatorial study of stable categories of graded Cohen--Macaulay modules over skew quadric hypersurfaces}, arXiv:1910.10612.
\bibitem[HN]{HN} H.\ Hijikata, K.\ Nishida, \emph{Bass orders in nonsemisimple algebras}, J.\ Math.\ Kyoto Univ.\ 34 (1994), no.\ 4, 797--837.
\bibitem[Hu]{Hu} D.\ Huybrechts, \emph{Fourier-Mukai transforms in algebraic geometry}, Oxford Mathematical Monographs.\ The Clarendon Press, Oxford University Press, Oxford, 2006.
\bibitem[I]{I} O.\ Iyama, \emph{Tilting Cohen-Macaulay representations}, Proceedings of the International Congress of Mathematicians--Rio de Janeiro 2018.\ Vol.\ II.\ Invited lectures, 125--162, World Sci.\ Publ., Hackensack, NJ, 2018.
\bibitem[IL]{IL} O.\ Iyama, B.\ Lerner, \emph{Tilting bundles on orders on $P^d$}, Israel J.\ Math.\ 211 (2016), no.\ 1, 147--169. 
\bibitem[IO]{IO} O.\ Iyama, S.\ Oppermann, \emph{Stable categories of higher preprojective algebras}, Adv.\ Math.\ 244 (2013), 23--68.
\bibitem[IT]{IT} O.\ Iyama, R.\ Takahashi, \emph{Tilting and cluster tilting for quotient singularities}, Math.\ Ann.\ 356 (2013), no.\ 3, 1065--1105.
\bibitem[IY1]{IY1} O.\ Iyama, D.\ Yang, \emph{Silting reduction and Calabi--Yau reduction of triangulated categories}, Trans.\ Amer.\ Math.\ Soc.\ 370 (2018), no.\ 11, 7861--7898.
\bibitem[IY2]{IY2} O.\ Iyama, D.\ Yang, \emph{Quotients of triangulated categories and Equivalences of Buchweitz, Orlov and Amiot-Guo-Keller}, to appear in Amer.\ J.\ Math., arXiv:1702.04475.
\bibitem[Jac]{J} H.\ Jacobinski, \emph{Sur les ordres commutatifs avec un nombre fini de r\'{e}seaux ind\'{e}composables}, Acta Math.\  118  1967 1--31.
\bibitem[Jas]{Jas} G.\ Jasso, $\tau^2$-stable tilting complexes over weighted projective lines, Adv.\ Math.\ 273 (2015), 1--31.
\bibitem[JKS]{JKS} B.\ T.\ Jensen, A.\ King, X.\ Su, \emph{A categorification of Grassmannian cluster algebras}, Proc.\ Lond.\ Math.\ Soc.\ (3) 113 (2016), no.\ 2, 185--212.
\bibitem[KST1]{KST1} H.\ Kajiura, K.\ Saito, A.\ Takahashi,
\emph{Matrix factorization and representations of quivers. II. Type $ADE$ case},
Adv.\ Math.\ 211 (2007), no.\ 1, 327--362.
\bibitem[KST2]{KST2} H.\ Kajiura, K.\ Saito, A.\ Takahashi,
\emph{Triangulated categories of matrix factorizations for regular systems of weights with $\epsilon=-1$}, Adv.\ Math.\ 220 (2009), no.\ 5, 1602--1654.
\bibitem[Ke]{Ke} B.\ Keller, \emph{Deriving DG categories}, Ann.\ Sci.\ \'{E}cole Norm.\ Sup.\ (4) 27 (1994), no.\ 1, 63--102.
\bibitem[KK]{KK} B.\ Keller, H.\ Krause, \emph{Tilting preserves finite global dimension}, to appear in Comptes Rendus Math\'{e}matique, arXiv:1911.11749.
\bibitem[KV1]{KV} B.\ Keller, D.\ Vossieck, \emph{Aisles in derived categories}, Deuxieme Contact Franco-Belge en Algebre (Faulx-les-Tombes, 1987).\ Bull.\ Soc.\ Math.\ Belg.\ Ser.\ A 40 (1988), no.\ 2, 239--253.
\bibitem[Ki1]{Ki1}Y.\ Kimura, \emph{Tilting theory of preprojective algebras and c-sortable elements}, J.\ Algebra 503 (2018), 186--221.
\bibitem[Ki2]{Ki2} Y.\ Kimura, \emph{Tilting and cluster tilting for preprojective algebras and Coxeter groups}, Int.\ Math.\ Res.\ Not.\ IMRN 2019, no.\ 18, 5597--5634.
\bibitem[KLM]{KLM} D.\ Kussin, H.\ Lenzing, H.\ Meltzer, \emph{Triangle singularities, ADE-chains, and weighted projective lines}, Adv.\ Math.\ 237 (2013), 194--251.
\bibitem[LP]{LP} H.\ Lenzing, J.\ A.\ de la Pena, \emph{Extended canonical algebras and Fuchsian singularities}, Math.\ Z.\ 268 (2011), no.\ 1-2, 143--167.
\bibitem[LZ]{LZ} M.\ Lu, B.\ Zhu, \emph{Singularity categories of Gorenstein monomial algebras}, arXiv:1708.00311.
\bibitem[LW]{LW} G.\ Leuschke, R.\ Wiegand, \emph{Cohen-Macaulay representations},
Mathematical Surveys and Monographs, 181.\ American Mathematical Society, Providence, RI, 2012.
\bibitem[MY]{MY} H.\ Minamoto, K.\ Yamaura, \emph{On finitely graded Iwanaga-Gorenstein algebras and the stable categories of their (graded) Cohen-Macaulay modules}, to appear in Adv.\ Math., arXiv:1812.03746.
\bibitem[Miy]{Miy} J.\ Miyachi, \emph{Duality for derived categories and cotilting bimodules}, J.\ Algebra 185 (1996), no.\ 2, 583--603.
\bibitem[Miz]{M} Y.\ Mizuno, \emph{APR tilting modules and graded quivers with potential}, Int.\ Math.\ Res.\ Not.\ IMRN 2014, no.\ 3, 817--841.
\bibitem[MU]{MU} I.\ Mori, K.\ Ueyama, \emph{Stable categories of graded maximal Cohen-Macaulay modules over noncommutative quotient singularities}, Adv.\ Math.\ 297 (2016), 54--92.
\bibitem[OP]{OP} L.\ O'Carroll, D.\ Popescu, \emph{On a theorem of Kn\"orrer concerning Cohen-Macaulay modules}, J.\ Pure Appl.\ Algebra 152 (2000), no.\ 1-3, 293--302.
\bibitem[O1]{O1} D.\ O.\ Orlov, \emph{Triangulated categories of singularities and D-branes in Landau-Ginzburg models}, Proc.\ Steklov Inst.\ Math.\ 2004, no.\ 3(246), 227--248.
\bibitem[O2]{O} D.\ Orlov, \emph{Derived categories of coherent sheaves and triangulated categories of singularities}, Algebra, arithmetic, and geometry: in honor of Yu.\ I.\ Manin.\ Vol.\ II, 503--531, Progr.\ Math., 270, Birkhauser Boston, Inc.\, Boston, MA, 2009.
\bibitem[RV]{RV} I.\ Reiten, M.\ Van den Bergh, \emph{Noetherian hereditary abelian categories satisfying Serre duality},
J.\ Amer.\ Math.\ Soc.\ 15 (2002), no.\ 2, 295--366.
\bibitem[Ric]{Ric} J.\ Rickard, \emph{Morita theory for derived categories} J.\ London Math.\ Soc.\ (2) 39 (1989), no.\ 3, 436--456.
\bibitem[Rin]{R} C.\ M.\ Ringel, \emph{Tame algebras and integral quadratic forms}, Lecture Notes in Mathematics, 1099.\ Springer-Verlag, Berlin, 1984.
\bibitem[Si]{Si} D.\ Simson, \emph{Linear representations of partially ordered sets and vector space categories}, Algebra, Logic and Applications, 4.\ Gordon and Breach Science Publishers, Montreux, 1992.
\bibitem[Sk]{Sk} A.\ Skowronski, \emph{Selfinjective algebras: finite and tame type}, Trends in representation theory of algebras and related topics, 169--238, Contemp.\ Math., 406, Amer.\ Math.\ Soc., Providence, RI, 2006.
\bibitem[SV]{SV} S.\ P.\ Smith, M.\ Van den Bergh, \emph{Noncommutative quadric surfaces}, J.\ Noncommut.\ Geom.\ 7 (2013), no.\ 3, 817--856.
\bibitem[U1]{U1} K.\ Ueda, \emph{Triangulated categories of Gorenstein cyclic quotient singularities}, Proc.\ Amer.\ Math.\ Soc.\ 136 (2008), no.\ 8, 2745--2747.
\bibitem[U2]{U2} K.\ Ueda, \emph{On graded stable derived categories of isolated Gorenstein quotient singularities}, J.\ Algebra 352 (2012), 382--391.
\bibitem[Ya]{Ya} K.\ Yamaura, \emph{Realizing stable categories as derived categories}, Adv.\ Math.\ 248 (2013), 784--819.
\bibitem[Yo]{Y} Y.\ Yoshino, \emph{Cohen-Macaulay modules over Cohen-Macaulay rings}, London Mathematical Society Lecture Note Series, 146.\ Cambridge University Press, Cambridge, 1990.
\end{thebibliography}
\end{document}